\newtheorem{theo}{Theorem}[section]
\newtheorem{lemm}[theo]{Lemma}
\newtheorem{prop}[theo]{Proposition}
\newtheorem{rema}[theo]{Remark}
\numberwithin{equation}{section}
\def\ud{{\rm d}}
\def\ep{\varepsilon}
\begin{document}

\title{The well-posedness, ill-posedness  and non-uniform dependence on initial data for the Fornberg-Whitham equation in Besov spaces}
\author{
    Yingying $\mbox{Guo}$ \footnote{email: guoyy35@fosu.edu.cn}\\
    $\mbox{Department}$ of Mathematics, Foshan University,\\
    Foshan, 528000, China
}
\date{}
\maketitle
\begin{abstract}
In this paper, we first establish the local well-posedness (existence, uniqueness and continuous dependence) for the Fornberg-Whitham equation in both supercritical Besov spaces $B^s_{p,r},\ s>1+\frac{1}{p},\ 1\leq p,r\leq+\infty$ and critical Besov spaces $B^{1+\frac{1}{p}}_{p,1},\ 1\leq p<+\infty$, which improves the previous work \cite{y2,ho,ht}. Then, we prove the solution is not uniformly continuous dependence on the initial data in supercritical Besov spaces $B^s_{p,r},\ s>1+\frac{1}{p},\ 1\leq p\leq+\infty,\ 1\leq r<+\infty$ and critical Besov spaces $B^{1+\frac{1}{p}}_{p,1},\ 1\leq p<+\infty$. At last, we show that the solution is ill-posed in $B^{\sigma}_{p,\infty}$ with $\sigma>3+\frac{1}{p},\ 1\leq p\leq+\infty$.  
\end{abstract}
Mathematics Subject Classification: 35Q35, 35A01, 35A02, 35B30, 35G25\\
\noindent \textit{Keywords}: The Fornberg-Whitham equation, Besov spaces, Local well-posedness, Non-uniform dependence, Ill-posedness.

\tableofcontents

\section{Introduction}
\par
In this paper, we consider the Cauchy problem for the following Fornberg-Whitham (FW) equation
\begin{align}\label{fw}
\left\{\begin{array}{ll}
u_{xxt}-u_{t}+\frac{9}{2}u_{x}u_{xx}+\frac{3}{2}uu_{xxx}-\frac{3}{2}uu_{x}+u_{x}=0,\ &t>0,\ x\in\mathbb{R},\\
u(0,x)=u_{0}(x),\ &x\in\mathbb{R},
\end{array}\right.
\end{align}
which was proposed by Whitham and Fornberg \cite{fw} as a model for breaking waves. Note that $(1-\partial_{xx})^{-1}f={\rm{p}}\ast f$ for any $f\in L^2$, where $\ast$ denote the convolution and ${\rm{p}}(x)=\frac{1}{2}e^{-|x|}$.
We can rewrite Eq. \eqref{fw} in non-local form
\begin{align}\label{fww}
\left\{\begin{array}{ll}
u_{t}+\frac{3}{2}uu_{x}=(1-\partial_{xx})^{-1}\partial_{x}u=\partial_{x}{\rm p}\ast u,\ &t>0,\ x\in\mathbb{R},\\
u(0,x)=u_{0}(x),\ &x\in\mathbb{R}.
\end{array}\right.
\end{align}
In this form, the FW equation was compared with the famous Korteweg–de Vries (KdV) equation \cite{kdv}
\begin{align*}
u_{t}+6uu_{x}=-\partial_{xxx}u
\end{align*}
and the classical Camassa-Holm (CH) equation \cite{ch,cl}
\begin{align*}
u_{t}+uu_{x}=-\partial_{x}(1-\partial_{xx})^{-1}\Big(u^{2}+\frac{1}{2}u^{2}_{x}\Big).
\end{align*}

The KdV equation admits solitons or solitary traveling wave solutions which maintain a constant shape and move at constant velocity. Indeed, the solitary wave solutions of the KdV equation in the non-periodic case are shown as
\begin{align*}
u(t,x)=\frac{c}{2}{\rm sech}^{2}\bigg(\frac{\sqrt{c}}{2}(x-ct)\bigg),
\end{align*}
where the constant $c$ is the wave speed. Unfortunately, the KdV equation did not have the property of wave breaking. Furthermore, it did not produce solitary waves of greatest height with a sharp peaked crest which are now called peakons.

In 1993, Camassa and Holm \cite{ch} found an integrable shallow water equation with peakon (peaked solitons) solutions which are solitons with discontinuous first derivative. The simplest one in the non-periodic case is of the form 
\begin{align*}
u(t,x)=ce^{-|x-ct|},
\end{align*}
where $c$ is a positive constant. The local well-posedness and local ill-posedness of the Cauchy problem for the CH equation in Sobolev spaces and Besov spaces have been investigated in  \cite{ce2,d1,d2,glmy,yyg,lio,liy,rb}. Moreover, the CH equation has global strong solutions, blow-up strong solutions, global weak solutions, global conservative weak solutions and dissipative weaak solutions, see \cite{c2,ce4,ce2,ce3,lio,bc1,bc2,cmo,hr1,xz1}. 
Further, the non-uniform continuity of the CH equation in Sobolev spaces and Besov spaces has been studied in many papers, see \cite{hmp,hk,hkm,lyz,lwyz}. 

It is interesting that the FW equation does not only admit solitary traveling wave solutions like the KdV equation, but also possess peakon solutions (or peaked traveling wave solutions) as the CH equation which are of the form
\begin{align*}
u(t,x)=\frac{8}{9}e^{-\frac{1}{2}|x-\frac{4}{3}t|},
\end{align*}
which were first found in \cite{fw}. A classification of other traveling wave solutions of the FW equation was presented by Yin, Tian and Fan \cite{ytf}. It's worth noting that the KdV equation and CH equation are integrable, and they possess infinitely many conserved quantities, an infinite hierarchy of quasi-local symmetries, a Lax pair and a bi-Hamiltonian structure. However, the FW equation is not integrable and the only useful conservation law we know so far is $\|u\|_{L^{2}}$. Therefore, the analysis of the FW equation would be somewhat more difficult.

Recently, the local well-posedness for \eqref{fww} in Sobolev spaces $H^{s},\ s>\frac{3}{2}$ and Besov spaces $B^{s}_{2,r},\ s>\frac{3}{2},\ 1<r<+\infty$ or $s\geq\frac{3}{2},\ r=1$ were established in \cite{ho,ht}. They also proved that the data-to-solution map is not uniformly continuous but H\"{o}lder continuous in some given topology. Furthermore, a blowup criterion for solutions was given. Later, Haziot \cite{ha}, H\"{o}rmann \cite{hor}, Wei \cite{w1,w2} and Wu et al \cite{wz} sharpened this blowup criterion and presented the sufficient conditions about the initial data to guarantee wave-breaking in finite time for the FW equation on the line and on the circle.

However, the local well-posedness for \eqref{fww} in Besov space $B_{p,r}^{s},\ s>1+\frac{1}{p},\ 1\leq p,\ r\leq+\infty$ or $s=1+\frac{1}{p},\ 1\leq p<+\infty,\ r=1$ has not been studied. The non-uniform dependence on initial data for \eqref{fww} in $B_{p,r}^{s},\ s>1+\frac{1}{p},\ 1\leq p\leq+\infty,\ 1\leq r<+\infty$ or $s=1+\frac{1}{p},\ 1\leq p<+\infty,\ r=1$ and local ill-posedness for \eqref{fww} in $B_{p,\infty}^{s},\ s>1+\frac{1}{p},\ 1\leq p\leq+\infty$ have also not been investigated yet. In the paper, following the idea of \cite{yyg,lyz,lwyz}, we aim to study the local well-posedness, local ill-posedness and non-uniform dependence on initial data for \eqref{fww} in Besov spaces.

our main results are stated as follows.
\begin{theo}\label{th}
Let $s\in\mathbb{R},\ 1\leq p,\ r \leq\infty$ and let $(s,p,r)$ satisfy the condition
\begin{equation}\label{condition}
s>1+\frac 1 p,\ 1\leq p,\ r\leq+\infty\quad\text{or}\quad s=1+\frac{1}{p},\ 1\leq p<+\infty,\ r=1.
\end{equation}
Assume $u_{0}\in B^s_{p,r}.$ Then, there exists a $T>0$ such that \eqref{fww} has a unique solution $u$ in $E^s_{p,r}(T)$ with the initial data $u_0$ and the map $u_0\mapsto u$ is continuous from any bounded subset of $B^s_{p,r}$ into $E^s_{p,r}(T)$. Moreover, for all $t\in[0,T]$, we have 
\begin{align}
\|u(t)\|_{B^s_{p,r}}\leq C\|u_0\|_{B^s_{p,r}}.\label{unformly}
\end{align}
\end{theo}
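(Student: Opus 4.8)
The plan is to prove Theorem \ref{th} by the classical Friedrichs regularization / Picard iteration scheme adapted to the transport structure of \eqref{fww}, exactly as in the references \cite{yyg,lyz,lwyz}. The key observation is that \eqref{fww} is a transport equation $u_t + \frac32 u u_x = F(u)$ with the smoothing source $F(u) = \partial_x\mathrm{p}\ast u$, for which one has at disposal the standard linear transport estimates in Besov spaces: if $v$ solves $\partial_t v + a\,\partial_x v = g$ then $\|v(t)\|_{B^s_{p,r}} \lesssim e^{C\int_0^t \|\partial_x a\|_{B^{s-1}_{p,r}\cap L^\infty}\,d\tau}\big(\|v(0)\|_{B^s_{p,r}} + \int_0^t \|g(\tau)\|_{B^s_{p,r}}\,d\tau\big)$, together with the product laws and the fact that $\partial_x\mathrm{p}\ast\,\cdot = \partial_x(1-\partial_{xx})^{-1}$ maps $B^s_{p,r}$ into itself (indeed it gains a derivative from $(1-\partial_{xx})^{-1}$ and loses one from $\partial_x$, so it is bounded on $B^s_{p,r}$, and one also has the gain $\|\partial_x\mathrm p\ast u\|_{B^{s}_{p,r}}\lesssim \|u\|_{B^{s-1}_{p,r}}$). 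Under the condition \eqref{condition}, $B^s_{p,r}$ is an algebra embedded in $C^{0,1}$, which is what makes all the nonlinear estimates close.

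The steps, in order, are as follows. \textbf{Step 1 (Approximate solutions).} Define $u^{(0)} := 0$ (or the low-frequency part of $u_0$) and solve inductively the linear transport problems $\partial_t u^{(n+1)} + \frac32 u^{(n)}\partial_x u^{(n+1)} = \partial_x\mathrm p\ast u^{(n)}$, $u^{(n+1)}(0) = S_{n+1}u_0$; existence of each $u^{(n+1)}$ in $C([0,\infty);B^s_{p,r})$ (or in $E^s_{p,r}(T)$) follows from the linear theory. \textbf{Step 2 (Uniform bounds).} Using the transport estimate with $a = \frac32 u^{(n)}$ and $g = \partial_x\mathrm p\ast u^{(n)}$, together with $\|u^{(n)}\|_{B^s_{p,r}}$ controlling both $\|\partial_x u^{(n)}\|_{B^{s-1}_{p,r}\cap L^\infty}$ and $\|g\|_{B^s_{p,r}}$, derive an integral inequality of the form $U_{n+1}(t) \le C\big(\|u_0\|_{B^s_{p,r}} + \int_0^t U_n(\tau)(1+U_n(\tau))\,d\tau\big)$ where $U_n(t) := \|u^{(n)}(t)\|_{B^s_{p,r}}$; then a standard bootstrap shows that there is $T = T(\|u_0\|_{B^s_{p,r}})>0$ and a constant $M$ with $\sup_n \sup_{[0,T]} U_n(t) \le M$, which already yields \eqref{unformly} after passing to the limit. \textbf{Step 3 (Contraction / convergence).} Estimate the differences $w^{(n)} := u^{(n+1)} - u^{(n)}$, which solve a transport equation with the same velocity field $\frac32 u^{(n)}$ and a right-hand side that is linear in $w^{(n-1)}$ (a commutator-type term $\frac32 w^{(n-1)}\partial_x u^{(n)}$ plus $\partial_x\mathrm p\ast w^{(n-1)}$ plus the difference of data $S_{n+1}u_0 - S_n u_0$); here one has to be slightly careful and measure $w^{(n)}$ in the one-notch-lower space $B^{s-1}_{p,r}$ (and in the critical case $r=1$ work directly, using the logarithmic interpolation or a more careful argument as in \cite{lyz}) to close the estimate, obtaining $\|w^{(n)}\|_{L^\infty_T B^{s-1}_{p,r}}$ summable, hence $(u^{(n)})$ Cauchy in $C([0,T];B^{s-1}_{p,r})$. \textbf{Step 4 (Passing to the limit; existence, regularity, uniqueness).} The limit $u$ lies in $L^\infty_T B^s_{p,r}$ by Step 2 and Fatou; it solves \eqref{fww}; a Lions–Magenes-type argument (continuity in time from the equation, plus $B^s_{p,r}$-valued weak continuity upgraded using the transport estimate run on $u$ itself) gives $u\in E^s_{p,r}(T)$; uniqueness follows by the same difference estimate in $B^{s-1}_{p,r}$ applied to two solutions. \textbf{Step 5 (Continuous dependence).} For $u_0^k \to u_0$ in $B^s_{p,r}$, combine the uniform bound (so all solutions live on a common $[0,T]$ with a common $M$) with the $B^{s-1}_{p,r}$ difference estimate for raw convergence, and then upgrade to convergence in $C([0,T];B^s_{p,r})$ by the Bona–Smith trick: split $u_0^k = S_N u_0^k + (\mathrm{Id}-S_N)u_0^k$, use that high frequencies are small uniformly in $k$ for $N$ large, and that the smoothed solutions converge in every $B^{s'}_{p,r}$; this is where the condition \eqref{condition} and the structure of $F$ are used once more.

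The main obstacle is the continuous-dependence statement at the level of the Besov norm itself (the map into $E^s_{p,r}(T)$, not merely into $E^{s-1}_{p,r}(T)$), and in particular handling the \emph{endpoint} cases $r=\infty$ in the supercritical range and $s=1+\frac1p$, $r=1$ in the critical range. For $r=\infty$ one cannot expect the approximation sequence to converge in $B^s_{p,\infty}$ and must instead argue via a direct stability estimate together with the Bona–Smith splitting, keeping careful track of the logarithmic loss; for the critical index $r=1$ the transport estimates must be used in their sharp form (no loss of derivative in the a priori bound, the loss appearing only in the difference estimate) and the algebra property of $B^{1+1/p}_{p,1}$ is essential. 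The nonlinear estimates themselves (product laws, commutator estimate $\|[\,S_j, u\partial_x\,]v\|$, boundedness of $\partial_x\mathrm p\ast\cdot$) are routine given the hypotheses and will be invoked from the standard Besov-space toolbox without detailed proof.
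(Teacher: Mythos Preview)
Your outline is correct and follows the same overall strategy as the paper: Picard iteration on the linear transport problem, uniform bounds in $B^s_{p,r}$, Cauchy estimates in $B^{s-1}_{p,r}$, Fatou plus the linear theory to recover $E^s_{p,r}(T)$, and uniqueness by the same difference estimate. Two minor deviations are worth flagging. First, the paper iterates with the \emph{fixed} initial datum $u_0$ at every step (not $S_{n+1}u_0$); this is harmless and actually slightly simplifies the Cauchy estimate since the data difference vanishes. Second, and more substantively, for continuous dependence the paper does \emph{not} use the Bona--Smith splitting you propose. Instead, for $r<\infty$ it differentiates the equation and splits $\partial_x u^n = w^n + z^n$, where $w^n$ solves the transport equation with drift $\frac32 u^n$ but with the \emph{limit} forcing $F(u^\infty)$ and limit data $\partial_x u_0^\infty$, and $z^n$ carries the remainders; convergence of $w^n$ then follows from a general lemma on transport equations with converging velocities (Lemma~\ref{continuous} in the paper, from \cite{book,liy}), and $z^n\to 0$ closes by Gronwall. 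Your Bona--Smith approach would also work here and is arguably more familiar, while the paper's splitting avoids introducing the frequency cutoff and instead isolates exactly the term responsible for loss of compactness. For $r=\infty$, note that you are over-worrying: the target space $E^s_{p,\infty}(T)$ only asks for $C_w([0,T];B^s_{p,\infty})$, and the paper obtains this directly by a short duality argument (pair against $\phi\in B^{-s}_{p',1}$ and use $S_j\phi\to\phi$), with no need for Bona--Smith or any logarithmic refinement.
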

\begin{rema}
Our result covers the well-posedness
results in \cite{ho,ht}. In fact, when $p=r=2$, this corresponds to the Sobolev space $H^{s}$ where well-posedness has been shown by Yin \cite{y2} by applying Kato’s semigroup approach. Well-posedness in Sobolev spaces $H^{s}$ for $s>\frac{3}{2}$ was also shown by Holmes \cite{ho} where he utilized a Galerkin type approximation argument. When $p=2$ and $s>\frac{3}{2},\ 1<r<\infty$ or $s\geq\frac{3}{2},\ r=1$, this corresponds to the Besov space $B^{s}_{2,r}$ where well-posedness has been shown by Holmes and Thompson \cite{ht} by using some standard a priori estimates for linear transport equations. 
\end{rema}

From our well-posedness result, we are also able to demonstrate that the dependence on the initial data in Besov space $B^{s}_{p,r}$ with $s>1+\frac{1}{p},\ 1\leq p\leq+\infty,\ 1\leq r<+\infty$ or $s=1+\frac{1}{p},\ 1\leq p<+\infty,\ r=1$ is sharp, as summarized in the following theorem.
\begin{theo}\label{uniform}
Let $s\in\mathbb{R},\ 1\leq p,\ r\leq\infty$ and let $(s,p,r)$ satisfy the condition
\begin{equation}\label{cond}
s>1+\frac 1 p,\ 1\leq p\leq+\infty,\ 1\leq r<+\infty\quad\text{or}\quad s=1+\frac{1}{p},\ 1\leq p<+\infty,\ r=1.
\end{equation}
Then the solution map of problem \eqref{fww} is not uniformly continuous from any bounded subset in $B^{s}_{p,r}$ into $C([0,T];B^{s}_{p,r})$. More precisely, there exists two sequences of solutions $u^n$ and ${\rm w}^n$ with the initial data $u^n_0={\rm w}^n_0+{\rm v}_0^n$ and ${\rm w}^n_0$ such that 
\begin{align*}
\|{\rm w}^n_0\|_{B^{s}_{p,r}}\lesssim 1\qquad and \qquad \lim\limits_{n\rightarrow\infty}\|{\rm v}^n_0\|_{B^{s}_{p,r}}=0,
\end{align*}
but
\begin{align*}
\liminf\limits_{n\rightarrow\infty}\Big\|u^n-{\rm w}^n\Big\|_{B^{s}_{p,r}}\gtrsim t,	
\qquad \forall\ t\in[0,T_0],
\end{align*}
with small time $T_0\leq T$.
\end{theo}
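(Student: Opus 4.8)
The plan is to construct explicit approximate solutions by combining a low-frequency (slowly varying) part and a high-frequency oscillatory part, in the spirit of \cite{yyg,lyz,lwyz}. Concretely, I would take ${\rm w}^n_0 = n^{-1}\phi(x)$ (or a fixed profile rescaled so that $\|{\rm w}^n_0\|_{B^s_{p,r}}\lesssim 1$) and ${\rm v}^n_0 = n^{-s}2^{-n}\psi(x)\cos(2^n x)$, where $\phi,\psi\in C_c^\infty(\mathbb R)$ are bump functions with $\psi\equiv 1$ on the support of $\phi$; using the Littlewood--Paley characterization one checks $\|{\rm v}^n_0\|_{B^s_{p,r}}\approx n^{-s}\to 0$ while the high-frequency piece still carries an $O(1)$ amount of oscillation after differentiation in the nonlinear term. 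The associated actual solutions $u^n$ (with data $u^n_0={\rm w}^n_0+{\rm v}^n_0$) and ${\rm w}^n$ (with data ${\rm w}^n_0$) exist on a common time interval $[0,T]$ by Theorem 1.2 and satisfy the uniform bound \eqref{unformly}.

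The core of the argument is to show that $u^n$ stays close to an explicitly computable \emph{approximate} solution $u^n_{\rm app}$. The natural ansatz, dictated by the transport structure $u_t+\frac32 uu_x = \partial_x{\rm p}\ast u$, is to let the high-frequency phase be transported by the low-frequency velocity: roughly
\[
u^n_{\rm app}(t,x) = \mathrm{w}^n_{\ell}(t,x) + n^{-s}2^{-n}\,\psi(x)\cos\!\big(2^n(x - \tfrac32\mathrm{w}^n_\ell(t,x)\,t)\big),
\]
where $\mathrm{w}^n_\ell$ is (an approximation of) the low-frequency solution with data ${\rm w}^n_0$. One then estimates the error $E^n := u^n - u^n_{\rm app}$ by plugging $u^n_{\rm app}$ into \eqref{fww}, computing the residual $F^n := \partial_t u^n_{\rm app} + \frac32 u^n_{\rm app}\partial_x u^n_{\rm app} - \partial_x{\rm p}\ast u^n_{\rm app}$, and running the linear transport estimates from the well-posedness theory on the equation satisfied by $E^n$. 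The key quantitative point is to show $\|F^n\|_{B^s_{p,r}} \to 0$ (so that $\|E^n(t)\|_{B^s_{p,r}}\to 0$ uniformly on $[0,T_0]$), while at the same time
\[
\big\|u^n_{\rm app}(t) - \mathrm{w}^n(t)\big\|_{B^s_{p,r}} \gtrsim \big\|n^{-s}2^{-n}\psi\,\big(\cos(2^n(x-\tfrac32\mathrm{w}^n_\ell t)) - \cos(2^n x)\big)\big\|_{B^s_{p,r}} \gtrsim t
\]
for $t$ small, the last inequality coming from expanding the phase difference, $\cos(2^n(x-\tfrac32\mathrm{w}^n_\ell t)) - \cos(2^n x) \approx 2^n\cdot\tfrac32\mathrm{w}^n_\ell(0,x)\,t\,\sin(2^n x) + \cdots$, whose $B^s_{p,r}$ norm is $\approx n^{-s}\cdot 2^n\cdot 2^{-n}\cdot t = n^{-s} t$ — wait, this would tend to $0$; so one instead normalizes the amplitude as ${\rm v}^n_0 = n^{-s}\psi(x)\cos(2^n x)$ and the gain of one derivative from the phase shift produces the clean lower bound $\gtrsim t$. (I would fix the exact powers of $n$ and $2^n$ by this consistency requirement: the amplitude must kill the $B^s$ norm of the data but the derivative hitting the phase in the nonlinearity must restore an $O(1)$ factor over time $t$.) Combining the triangle inequality $\|u^n-\mathrm{w}^n\|_{B^s_{p,r}} \geq \|u^n_{\rm app}-\mathrm{w}^n\|_{B^s_{p,r}} - \|E^n\|_{B^s_{p,r}} \geq c\,t - o(1)$ gives the result.

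I expect the main obstacle to be the error estimate $\|F^n\|_{B^s_{p,r}}\to 0$: one must carefully track how each term of the residual behaves in the Besov norm, in particular the commutator-type terms arising because $\partial_x$ hitting the product $\psi(x)\cos(2^n(x-\frac32\mathrm{w}^n_\ell t))$ produces both an $O(2^n)$ term (from the phase, which is the ``good'' term we want to survive in the lower bound but must cancel in the residual) and lower-order terms involving $\partial_x\psi$ and $\partial_x\mathrm{w}^n_\ell$. Handling the nonlocal term $\partial_x{\rm p}\ast u^n_{\rm app}$ is comparatively easy since the operator $\partial_x(1-\partial_{xx})^{-1}$ is smoothing of order $1$, so it gains back the derivative and is in fact a \emph{lower-order} perturbation on the high-frequency piece — this is where the FW structure is friendlier than CH, whose nonlinearity $\partial_x(1-\partial_{xx})^{-1}(u^2+\frac12 u_x^2)$ is genuinely quadratic of order one. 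A secondary technical point is the endpoint case $s=1+\frac1p,\ r=1$, where I would use the logarithmic interpolation / the fact that $B^{1+1/p}_{p,1}\hookrightarrow W^{1,\infty}$ to close the transport estimates; and the case $p=\infty$, where the Besov norm does not control $L^\infty$ of $\partial_x$ directly but the space $B^s_{\infty,r}$ with $s>1$ still embeds into $C^1$, so the nonlinear term is controlled as in the proof of Theorem 1.2.
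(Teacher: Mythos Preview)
Your proposal has a concrete scaling error that would prevent it from closing, and it also takes a substantially different route from the paper.

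\textbf{The scaling gap.} With frequency $2^n$, the Besov norm of $\psi(x)\cos(2^n x)$ scales like $2^{ns}$, not $n^{s}$. Thus your candidate $\mathrm{v}^n_0 = n^{-s}2^{-n}\psi\cos(2^n\cdot)$ has $\|\mathrm{v}^n_0\|_{B^s_{p,r}} \approx n^{-s}2^{n(s-1)} \to \infty$ for $s>1$, and your ``corrected'' choice $n^{-s}\psi\cos(2^n\cdot)$ is even worse. You seem to be conflating the Himonas--Kenig convention (frequency $n$, amplitude $n^{-s}$) with a dyadic frequency $2^n$; the two are not interchangeable. A consistent version of your ansatz would require amplitude $\sim 2^{-n(s+1)}$ on the high-frequency piece, together with a low-frequency velocity of size $O(1)$ rather than $n^{-1}$, to make the phase-shift lower bound come out $\gtrsim t$ while keeping $\|\mathrm{v}^n_0\|_{B^s_{p,r}}\to 0$.

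\textbf{Comparison with the paper.} The paper does \emph{not} use the approximate-solution-with-transported-phase machinery you sketch. Instead it assigns the roles oppositely: the bounded sequence $\mathrm{w}^n_0 = 2^{-ns}\psi(x)\sin(\tfrac{33}{24}2^n x)$ is the \emph{high}-frequency piece (normalized so $\|\mathrm{w}^n_0\|_{B^s_{p,r}}\approx 1$), and the vanishing perturbation $\mathrm{v}^n_0 = \tfrac{24}{33}2^{-n}\psi(x)$ is \emph{low} frequency. The argument is then a first-order Taylor expansion in time: one shows directly that $u^n(t) = u^n_0 - \tfrac{3}{2}t\,u^n_0\partial_x u^n_0 + \mathrm{e}^n$ with $\|\mathrm{e}^n\|_{B^s_{p,r}} \leq Ct^2 + C2^{-n}$, and separately that $\|\mathrm{w}^n(t) - \mathrm{w}^n_0\|_{B^s_{p,r}} \to 0$. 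The lower bound then reduces to the static estimate $\liminf_n\|\mathrm{v}^n_0\partial_x\mathrm{w}^n_0\|_{B^s_{p,\infty}} \geq \tilde A > 0$, which is an elementary Littlewood--Paley computation (the product is essentially $2^{-ns}\psi^2\cos(\tfrac{33}{24}2^n\cdot)$, localized at frequency $2^n$). This avoids constructing any approximate solution, avoids all phase-transport residual estimates, and handles the critical case $s=1+\tfrac1p$, $r=1$ and general $p$ uniformly---whereas your residual estimate $\|F^n\|_{B^s_{p,r}}\to 0$ would be the hardest part and you have not indicated how the derivative loss in the $O(2^n)$ phase term actually cancels.
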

\begin{rema}
For the non-uniform denpendence of the solutions to \eqref{fww} in Besov spaces, the key argument is to construct the initial data.
\end{rema}

Thanks to Theorem \ref{th}, for $r=+\infty$, the solution map of  \eqref{fww} is weak continuous with respect to the initial data $u_{0}\in B^{s}_{p,\infty}$ with $s>1+\frac{1}{p},\ 1\leq p\leq +\infty$. In fact, the data-to-solution map of \eqref{fww} is not continuous, i.e. the Cauchy problem of the FW equation \eqref{fww} is ill-posed in $B^\sigma_{p,\infty}$ with $\sigma>3+\frac{1}{p},\ 1\leq p\leq +\infty$, which is achieved in the following theorem.  
\begin{theo}\label{ill}
Let $\sigma>3+\frac 1 p$ with $1\leq p\leq \infty$. There exists a initial data $u_0\in B^\sigma_{p,\infty}$ and a positive constant $\ep_0$ such that the data-to-solution map $u_0\mapsto u(t)$ of \eqref{fww} satisfies
\begin{align*}
\limsup_{t\to0^+}\|u(t)-u_0\|_{B^\sigma_{p,\infty}}\geq \ep_0.
\end{align*}
\end{theo}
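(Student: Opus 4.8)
The plan is to exhibit a single initial datum $u_0\in B^\sigma_{p,\infty}$ for which the emanating solution leaves a fixed $B^\sigma_{p,\infty}$-ball around $u_0$ in arbitrarily short time. The natural candidate, following the classical ill-posedness constructions for transport-type equations, is a lacunary-type series $u_0=\sum_{j\geq 0}2^{-j\sigma}\phi(x)\cos(2^j x)$ (or a frequency-localized variant with a bump $\phi$ of suitable small support), whose dyadic blocks have amplitudes $\|\Delta_j u_0\|_{L^p}\sim 2^{-j\sigma}$, so that $u_0\in B^\sigma_{p,\infty}$ but $u_0\notin B^\sigma_{p,1}$ and, crucially, $u_0\notin B^{\sigma}_{p,\infty}$ is \emph{barely} true while $\partial_x^2 u_0$ or $u_{0x}$ just fails to be bounded in a borderline sense. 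The point of taking $\sigma>3+\frac1p$ rather than $\sigma>1+\frac1p$ is that the nonlinear term $\frac32 uu_x$ combined with two derivatives lost through the structure of the equation (when one estimates $u(t)-u_0$ one differentiates the flow) forces a genuine derivative loss of order up to $2$: the effective regularity at which one can still control the solution is $\sigma-2$, and the obstruction to continuity appears precisely when $\sigma-2>1+\frac1p$.

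The key steps, in order, are as follows. First I would record the Duhamel/mild formulation: from \eqref{fww}, $u(t)-u_0=-\int_0^t\big(\frac32 uu_x\big)(\tau)\,d\tau+\int_0^t\big(\partial_x{\rm p}\ast u\big)(\tau)\,d\tau$. The second integral is harmless — the operator $\partial_x(1-\partial_{xx})^{-1}$ is smoothing of order one, so its contribution is $O(t)$ in $B^{\sigma}_{p,\infty}$ and in fact small. The whole difficulty is the transport term. Second, I would use Theorem~\ref{th} at a \emph{lower} regularity: since $\sigma-2>1+\frac1p$, the datum $u_0$ also lies in $B^{\sigma-2}_{p,\infty}\hookrightarrow B^{s'}_{p,r'}$ for some admissible $(s',p,r')$, so there is a genuine solution $u\in E^{s'}_{p,r'}(T)$ with a uniform bound $\|u(t)\|_{B^{\sigma-1}_{p,\infty}}\lesssim\|u_0\|_{B^{\sigma-1}_{p,\infty}}$ on a time interval independent of the high-frequency tail. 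Third — the heart of the matter — I would show that $\big\|\int_0^t \frac32 uu_x\,d\tau\big\|_{B^\sigma_{p,\infty}}$ does \emph{not} tend to $0$ as $t\to0^+$, by isolating a single high dyadic block. Writing $uu_x=\frac12\partial_x(u^2)$ and using the paraproduct decomposition, the term $T_{u_{0x}}u_0+\tfrac12\partial_x R(u_0,u_0)$ etc. produces, in the block $\Delta_N$ with $N$ large, a contribution of size $\sim N\cdot 2^{-N\sigma}\cdot\|u_0\|_{L^\infty}$-type — but because $u_0\notin B^{\sigma}_{p,1}$, when one sums the \emph{relevant} interactions the $B^\sigma_{p,\infty}$-norm of the time integral is bounded below by a constant times $t$ (not $o(1)$), for all small $t$. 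Concretely one picks a frequency $N$ depending on $t$ (e.g. $2^N\sim 1/t$ is not needed; rather one shows $\sup_N 2^{N\sigma}\|\Delta_N(\int_0^t uu_x)\|_{L^p}\geq c$ uniformly) and estimates the error $u(\tau)-u_0$ in a lower norm by $O(\tau)$ to replace $u$ by $u_0$ up to an $O(t^2)$ correction.

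The main obstacle is precisely this lower bound in Step three: one must show the leading-order-in-$t$ term $\frac t2\partial_x(u_0^2)$ (or rather its high-frequency part) has $B^\sigma_{p,\infty}$-norm bounded below by a fixed positive constant times $t$, which requires choosing $u_0$ so that $\partial_x(u_0^2)\notin B^{\sigma-1}_{p,\infty}$ in a quantitative way — equivalently, so that the self-interaction of the marginally-$B^\sigma_{p,\infty}$ datum under multiplication and one derivative escapes $B^{\sigma-1}_{p,\infty}$. This is a Fourier-analytic computation: with $u_0=\sum_k 2^{-k\sigma}\phi(x)e^{i2^k x}$, the product $u_0^2$ contains resonant terms $2^{-2k\sigma}\phi^2 e^{i2^{k+1}x}$ living at frequency $2^{k+1}$, giving $\|\Delta_{k+1}(u_0^2)\|_{L^p}\gtrsim 2^{-2k\sigma}$, hence after $\partial_x$ a contribution $\sim 2^{k+1}2^{-2k\sigma}$ at frequency $2^{k+1}$, i.e. $2^{(k+1)\sigma}\|\Delta_{k+1}\partial_x(u_0^2)\|_{L^p}\sim 2^{k}2^{(k+1)\sigma}2^{-2k\sigma}=2^{(1-k)\sigma+k}$, which is bounded when $\sigma\geq\frac{k}{k-1}$ — so this naive choice is too small and one must instead use a \emph{non-resonant} high$\times$low interaction: pair a single very high mode $2^{-N\sigma}e^{i2^N x}$ against the low bump $\phi$, producing at frequency $\sim 2^N$ a term of size $2^{-N\sigma}$, so that $\partial_x$ of it has $B^\sigma_{p,\infty}$-size $\sim 2^N\cdot2^{-N\sigma}\cdot 2^{N(\sigma)}\cdot2^{-N\sigma}$... — getting the bookkeeping exactly right, and making sure the discarded commutator/remainder terms and the $u-u_0$ correction are genuinely lower order, is where the care is needed. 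Once the lower bound $\|u(t)-u_0\|_{B^\sigma_{p,\infty}}\gtrsim t - Ct^2 - (\text{smoothing term }\lesssim t)$ is established with the right constants, taking $t$ small and $\ep_0$ a suitable fraction of the leading constant closes the argument; I would finish by noting the construction is uniform in $1\leq p\leq\infty$ since all estimates go through $L^p$-norms of fixed bump functions.
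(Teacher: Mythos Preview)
Your overall architecture matches the paper's: write $u(t)-u_0 = t\,\mathbf{T}(u_0) + \mathbf{w}(t)$ with $\mathbf{T}(u_0) = -\frac32 u_0\partial_x u_0 + \partial_x{\rm p}\ast u_0$, control $\mathbf{w}$ in the weaker norm $B^{\sigma-2}_{p,\infty}$ by $Ct^2$, and extract a lower bound from the transport part of $\mathbf{T}(u_0)$. But two concrete points are wrong, and together they break the argument as written.

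First, the lower-bound computation. The self-interaction you compute (mode $k$ against mode $k$, giving $\|\Delta_{k+1}(u_0^2)\|_{L^p}\gtrsim 2^{-2k\sigma}$) is indeed too small, as you noticed. The correct mechanism is the high$\times$low interaction \emph{within the lacunary series itself}: mode $N$ (amplitude $2^{-N\sigma}$, frequency $\sim 2^N$) against mode $0$ (amplitude $\sim 1$, frequency $\sim 1$) produces a term of amplitude $\sim 2^{-N\sigma}$ sitting at frequency $\sim 2^N$. Hence $\|\Delta_N(u_0^2)\|_{L^p}\gtrsim 2^{-N\sigma}$, and after $\partial_x$ one gets $2^{N\sigma}\|\Delta_N(u_0\partial_x u_0)\|_{L^p}\gtrsim 2^N$, a quantity that \emph{grows} in $N$. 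To make this clean one needs a genuine lacunary gap: the paper takes frequencies $2^{lN}$ with $l\geq 4$ so that all cross-products $\cos\big(\frac{33}{24}(2^{lN}\pm 2^{lm})x\big)$, $m<N$, land in the single dyadic block $lN$; with consecutive powers $2^j$ as you wrote, the blocks leak and the bookkeeping fails. Your attempt at the high$\times$low estimate trails off into an incoherent product of powers and never recovers the crucial $2^N$ factor.

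Second, and tied directly to this, your endgame inequality $\|u(t)-u_0\|_{B^\sigma_{p,\infty}}\gtrsim t - Ct^2 - (\text{smoothing}\lesssim t)$ cannot work: the right-hand side is $\leq 0$ once the constants are honest, since the smoothing contribution $\partial_x{\rm p}\ast u_0$ is itself $O(t)$ in $B^\sigma_{p,\infty}$ with a constant you do not control. The actual lower bound reads
\[
\|u(t)-u_0\|_{B^\sigma_{p,\infty}}\;\geq\; 2^{N\sigma}\|\Delta_N(u(t)-u_0)\|_{L^p}\;\geq\; c\,t\,2^{N}\;-\;Ct\;-\;C\,2^{2N}t^2,
\]
the middle term coming from the smoothing part of $\mathbf{T}(u_0)$ and the last from $\|\mathbf{w}\|_{B^{\sigma-2}_{p,\infty}}\leq Ct^2$ paid against the two-derivative loss $2^{2N}$ when passing from level $\sigma-2$ to the $N$-th block at level $\sigma$. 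One \emph{does} need the diagonal choice you explicitly rejected: first take $N$ large so that $c\,2^N\geq 2C$, then take $t\,2^N\approx\ep$ small to obtain $\geq c_1\ep$. Without this two-parameter balancing the argument does not close.
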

\begin{rema}
Theorem \ref{ill} demonstrates the ill-posedness of the FW equation in $B^\sigma_{p,\infty}$. More precisely, there exists a $u_0\in B^\sigma_{p,\infty}$ such that the corresponding solution to the FW equation that starts from $u_0$ does not converge back to $u_0$ in the sense of $B^\sigma_{p,\infty}$-norm as time goes to zero. Our key argument is to construct a initial data $u_0$.
\end{rema}

Our paper unfolds as follows. In the second section, we introduce some preliminaries which will be used in this sequel. In the third section, we establish the local well-posedness and continuous dependness of \eqref{fww} in $B^s_{p,r}$ with $s>1+\frac{1}{p},\ 1\leq p,\ r\leq+\infty$ or $s=1+\frac{1}{p},\ 1\leq p<+\infty,\ r=1$. In the fourth section, we give the non-uniform dependence on initial data for \eqref{fww} in $B_{p,r}^{s}$ with $ s>1+\frac{1}{p},\ 1\leq p\leq+\infty,\ 1\leq r<+\infty$ or $s=1+\frac{1}{p},\ 1\leq p<+\infty,\ r=1$. In the last section, by constructing a initial data $u_{0}\in B^{\sigma}_{p,\infty}$ with $\sigma>3+\frac 1 p,\ 1\leq p\leq+\infty$, we prove that the corresponding solution to \eqref{fww} starting from $u_{0}$ is discontinuous at $t=0$ in the norm of $B^{\sigma}_{p,\infty}$, which implies  the ill-posedness for \eqref{fww} in $B^{\sigma}_{p,\infty}$.

\textbf{Notation.} In the following, given a Banach space $X$, we denote its norm by $\|\cdot\|_{X}$. For $I\subset\mathbb{R}$, we denote by $C(I;X)$ the set of continuous functions on $I$ with values in $X$. Sometimes we will denote $L^q(0,T;X)$ by $L_T^qX$.

\section{Preliminaries}
\par
In this section, we first introduce the Bernstein's inequalities and some properties of the Littlewood-Paley theory and Besov spaces in \cite{book}.

\begin{prop}[Bernstein's inequalities, See \cite{book}]\label{bern}
Let $\mathfrak{B}$ be a ball and $\mathfrak{C}$ be an annulus. A constant $C>0$ exists such that for all $k\in\mathbb{N},\ 1\leq  p\leq q\leq\infty$, and any function $f\in L^{p}(\mathbb{R}^{d})$, we have
\begin{align*}
&{\rm Supp}{\widehat{f}}\subset\lambda\mathfrak{B}\Rightarrow\|D^{k}f\|_{L^{q}}=\sup\limits_{|\alpha|=k}\|\partial^{\alpha}f\|_{L^{q}}\leq C^{k+1}\lambda^{k+d(\frac1p-\frac1q)}\|f\|_{L^{p}},\\
&{\rm Supp}{\widehat{f}}\subset\lambda\mathfrak{C}\Rightarrow C^{-k-1}\lambda^{k}\|f\|_{L^{p}}\leq\|D^{k}f\|_{L^{p}}\leq C^{k+1}\lambda^{k}\|f\|_{L^{p}}.
\end{align*}
\end{prop}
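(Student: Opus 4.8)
The plan is to turn every derivative into a Fourier multiplier acting on $f$, realize that multiplier as a convolution against a fixed profile rescaled by $\lambda$, and then read off the powers of $\lambda$ by a change of variables while controlling the operator norm through Young's convolution inequality. I fix once and for all two auxiliary cut-offs, independent of $\lambda,k,p,q$: a function $\chi\in C_c^\infty$ with $\chi\equiv1$ on a neighbourhood of $\mathfrak{B}$, and a function $\tilde\chi\in C_c^\infty$ supported away from the origin with $\tilde\chi\equiv1$ on a neighbourhood of $\mathfrak{C}$. The two recurring tools will be the dilation rule $\mathcal F^{-1}[\psi(\cdot/\lambda)](x)=\lambda^{d}(\mathcal F^{-1}\psi)(\lambda x)$ and the elementary fact that, if $p\le q$ and $1+\tfrac1q=\tfrac1r+\tfrac1p$, then $r\ge1$, so Young's inequality $\|g\ast f\|_{L^q}\le\|g\|_{L^r}\|f\|_{L^p}$ is admissible.

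For the first inequality I would start from the spectral localization $\widehat f=\chi(\cdot/\lambda)\widehat f$, legitimate because $\mathrm{Supp}\,\widehat f\subset\lambda\mathfrak B$. For any $\alpha$ with $|\alpha|=k$ this gives $\partial^\alpha f=g_{\alpha,\lambda}\ast f$ with $g_{\alpha,\lambda}=\mathcal F^{-1}\big[(i\xi)^\alpha\chi(\xi/\lambda)\big]$. Writing $(i\xi)^\alpha\chi(\xi/\lambda)=\lambda^{k}\psi_\alpha(\xi/\lambda)$ where $\psi_\alpha(\eta)=(i\eta)^\alpha\chi(\eta)$, the dilation rule yields $g_{\alpha,\lambda}(x)=\lambda^{k+d}(\mathcal F^{-1}\psi_\alpha)(\lambda x)$, and a direct change of variables gives $\|g_{\alpha,\lambda}\|_{L^r}=\lambda^{k+d(1-1/r)}\|\mathcal F^{-1}\psi_\alpha\|_{L^r}$. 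Since $1-\tfrac1r=\tfrac1p-\tfrac1q$, Young's inequality produces exactly the claimed power $\lambda^{k+d(1/p-1/q)}$; taking the supremum over $|\alpha|=k$ finishes this half.

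The annulus estimate splits in two. The upper bound is immediate from the first inequality applied with $q=p$ and a ball containing $\mathfrak C$, which gives $\|D^kf\|_{L^p}\le C^{k+1}\lambda^{k}\|f\|_{L^p}$. For the lower bound I would invert the derivative on the annulus, where $|\xi|\sim\lambda$ so $|\xi|^{-2k}$ is harmless. Using the multinomial identity $|\xi|^{2k}=\sum_{|\beta|=k}\tfrac{k!}{\beta!}\xi^{2\beta}$ I write, on $\mathrm{Supp}\,\widehat f$,
\[
\widehat f(\xi)=\tilde\chi(\xi/\lambda)\,\widehat f(\xi)=i^{-k}\sum_{|\beta|=k}\frac{k!}{\beta!}\,\frac{\xi^\beta\,\tilde\chi(\xi/\lambda)}{|\xi|^{2k}}\,(i\xi)^\beta\widehat f(\xi),
\]
so that $f=i^{-k}\sum_{|\beta|=k}\tfrac{k!}{\beta!}\,m_{\beta,\lambda}\ast\partial^\beta f$ with $m_{\beta,\lambda}=\mathcal F^{-1}\big[\xi^\beta|\xi|^{-2k}\tilde\chi(\xi/\lambda)\big]$. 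The same dilation computation gives $\|m_{\beta,\lambda}\|_{L^1}=\lambda^{-k}\|\mathcal F^{-1}\Phi_\beta\|_{L^1}$, where $\Phi_\beta(\eta)=\eta^\beta|\eta|^{-2k}\tilde\chi(\eta)$, and Young with $r=1$ then yields $\lambda^{k}\|f\|_{L^p}\le\big(\sum_{|\beta|=k}\tfrac{k!}{\beta!}\|\mathcal F^{-1}\Phi_\beta\|_{L^1}\big)\|D^kf\|_{L^p}$, which rearranges to the desired $C^{-k-1}\lambda^k\|f\|_{L^p}\le\|D^kf\|_{L^p}$.

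I expect the one genuinely delicate point to be the geometric control of the constant $C^{k+1}$, rather than the scaling bookkeeping. Concretely, one must bound $\|\mathcal F^{-1}\psi_\alpha\|_{L^r}$ and $\|\mathcal F^{-1}\Phi_\beta\|_{L^1}$ uniformly over $|\alpha|,|\beta|=k$ by $C^{k+1}$. Since $\psi_\alpha$ is a degree-$k$ polynomial times the fixed bump $\chi$, its inverse Fourier transform is Schwartz, and estimating a weighted $L^\infty$ norm of $\mathcal F^{-1}\psi_\alpha$ by controlling finitely many derivatives of $\psi_\alpha$ shows that each additional order of differentiation costs only a fixed multiplicative factor. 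Combined with the weights $\tfrac{k!}{\beta!}$, whose sum over $|\beta|=k$ equals $d^{k}$ by the multinomial theorem, this keeps the total growth geometric in $k$. This constant-tracking is the crux; the remaining ingredients — the dilation rule, Young's inequality, and the multinomial identity — are entirely routine.
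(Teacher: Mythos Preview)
The paper does not actually prove this proposition; it is stated as a preliminary with the citation ``See \cite{book}'' (Bahouri--Chemin--Danchin) and no argument is given. Your proposal is correct and is in fact the standard proof one finds in that reference: localize with a cut-off equal to $1$ on $\mathfrak B$ (resp.\ $\mathfrak C$), rewrite $\partial^\alpha$ as convolution against $\mathcal F^{-1}[(i\xi)^\alpha\chi(\xi/\lambda)]$, extract the $\lambda$-dependence by dilation, apply Young, and for the lower bound on the annulus invert $|\xi|^{2k}$ via the multinomial identity. Your identification of the only nontrivial point---the geometric-in-$k$ control of $\|\mathcal F^{-1}\psi_\alpha\|_{L^r}$ and $\|\mathcal F^{-1}\Phi_\beta\|_{L^1}$---is accurate and matches the textbook treatment.
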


Let $\chi: \mathbb{R}\rightarrow[0,1]$ be a radical, smooth, and even function which is suppported in $\mathcal{B}=\{\xi:|\xi|\leq\frac 4 3\}$.
Let $\varphi:\mathbb{R}\rightarrow[0,1]$ be a radical, smooth function which is suppported in $\mathcal{C}=\{\xi:\frac 3 4\leq|\xi|\leq\frac 8 3\}$.

Denote $\mathcal{F}$ and $\mathcal{F}^{-1}$ by the Fourier transform and the Fourier inverse transform respectively as follows:
\begin{align*}
&\mathcal{F}u(\xi)=\hat{u}(\xi)=\int_{\mathbb{R}^{d}}e^{-ix\xi}u(x){\ud}x,\\
&u(x)=\big(\mathcal{F}^{-1}\hat{u}\big)(x)=\frac{1}{2\pi}\int_{\mathbb{R}^{d}}e^{ix\xi}\hat{u}(\xi){\ud}\xi.	
\end{align*}

For any $u\in\mathcal{S}'(\mathbb{R}^d)$ and all $j\in\mathbb{Z}$, define
$\Delta_j u=0$ for $j\leq -2$; $\Delta_{-1} u=\mathcal{F}^{-1}(\chi\mathcal{F}u)$; $\Delta_j u=\mathcal{F}^{-1}(\varphi(2^{-j}\cdot)\mathcal{F}u)$ for $j\geq 0$; and $S_j u=\sum\limits_{j'<j}\Delta_{j'}u$.

Let $s\in\mathbb{R},\ 1\leq p,\ r\leq\infty.$ We define the nonhomogeneous Besov space $B^s_{p,r}(\mathbb{R}^d)$
$$  B^s_{p,r}=B^s_{p,r}(\mathbb{R}^d)=\Big\{u\in S'(\mathbb{R}^d):\|u\|_{B^s_{p,r}}=\big\|(2^{js}\|\Delta_j u\|_{L^p})_j \big\|_{l^r(\mathbb{Z})}<\infty\Big\}.$$

The corresponding nonhomogeneous Sobolev space $H^{s}(\mathbb{R}^d)$ is
$$H^{s}=H^{s}(\mathbb{R}^d)=\Big\{u\in S'(\mathbb{R}^d):\ u\in L^2_{loc}(\mathbb{R}^d),\ \|u\|^2_{H^s}=\int_{\mathbb{R}^d}(1+|\xi|^2)^s|\mathcal{F}u(\xi)|^2{\ud}\xi<\infty\Big\}. $$

We introduce a function space, which will be used in the following.
\begin{equation*}
E^s_{p,r}(T)\triangleq \left\{\begin{array}{ll}
C([0,T];B^s_{p,r})\cap C^1([0,T];B^{s-1}_{p,r}), & \text{if}\ r<\infty,  \\
C_w([0,T];B^s_{p,\infty})\cap C^{0,1}([0,T];B^{s-1}_{p,\infty}), & \text{if}\ r=\infty.
\end{array}\right.
\end{equation*}

Then, we recall some properties about the Besov spaces.
\begin{prop}[See \cite{book}]\label{Besov}
Let $s\in\mathbb{R},\ 1\leq p,\ p_1,\ p_2,\ r,\ r_1,\ r_2\leq\infty.$  \\
{\rm(1)} $B^s_{p,r}$ is a Banach space, and is continuously embedded in $\mathcal{S}'$. \\
{\rm(2)} If $r<\infty$, then $\lim\limits_{j\rightarrow\infty}\|S_j u-u\|_{B^s_{p,r}}=0$. If $p,\ r<\infty$, then $C_0^{\infty}$ is dense in $B^s_{p,r}$. \\
{\rm(3)} If $p_1\leq p_2$ and $r_1\leq r_2$, then $ B^s_{p_1,r_1}\hookrightarrow B^{s-(\frac d {p_1}-\frac d {p_2})}_{p_2,r_2}. $
If $s_1<s_2$, then the embedding $B^{s_2}_{p,r_2}\hookrightarrow B^{s_1}_{p,r_1}$ is locally compact. \\
{\rm(4)} $B^s_{p,r}\hookrightarrow L^{\infty} \Leftrightarrow s>\frac d p\ \text{or}\ s=\frac d p,\ r=1$. \\
{\rm(5)} Fatou property: if $(u_n)_{n\in\mathbb{N}}$ is a bounded sequence in $B^s_{p,r}$, then an element $u\in B^s_{p,r}$ and a subsequence $(u_{n_k})_{k\in\mathbb{N}}$ exist such that
$$ \lim_{k\rightarrow\infty}u_{n_k}=u\ \text{in}\ \mathcal{S}'\quad \text{and}\quad \|u\|_{B^s_{p,r}}\leq C\liminf_{k\rightarrow\infty}\|u_{n_k}\|_{B^s_{p,r}}. $$
{\rm(6)} Let $m\in\mathbb{R}$ and $f$ be a $S^m$-mutiplier $($i.e. f is a smooth function and satisfies that $\forall\ \alpha\in\mathbb{N}^d$,
$\exists\ C=C(\alpha)$ such that $|\partial^{\alpha}f(\xi)|\leq C(1+|\xi|)^{m-|\alpha|},\ \forall\ \xi\in\mathbb{R}^d)$.
Then the operator $f(D)=\mathcal{F}^{-1}(f\mathcal{F})$ is continuous from $B^s_{p,r}$ to $B^{s-m}_{p,r}$.
\end{prop}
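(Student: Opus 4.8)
The plan is to reduce every one of the six assertions to a statement about the dyadic blocks $\Delta_j u$ and to handle each one with only two ingredients: the Bernstein inequalities of Proposition \ref{bern} and elementary facts about the weighted sequence space $\ell^r\big((2^{js})_j;L^p\big)$. The key observation is that $u\mapsto(\Delta_j u)_{j\geq-1}$ identifies $B^s_{p,r}$ isometrically with a subspace of this sequence space, so most claims follow by transporting known properties of $\ell^r$ and $L^p$ through that identification. I would establish the items in the order listed, since the later ones (density, embeddings, Fatou) lean on the completeness and the $\mathcal S'$-embedding proved first.

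For item (1), I would first obtain the continuous embedding $B^s_{p,r}\hookrightarrow\mathcal S'$: for $\phi\in\mathcal S$ the pairing $\langle\Delta_j u,\phi\rangle$ is bounded, via Bernstein, by $2^{jd/p}\|\Delta_j u\|_{L^p}$ times a rapidly decaying seminorm of $\phi$, so $\sum_j\Delta_j u$ converges in $\mathcal S'$ with a bound by $\|u\|_{B^s_{p,r}}$ times finitely many Schwartz seminorms. Completeness then follows from completeness of $\ell^r(L^p)$: given a Cauchy sequence $(u_n)$, each weighted block $2^{js}\Delta_j u_n$ is Cauchy in $L^p$, the limits $v_j$ stay spectrally supported in the correct annulus, and the reconstruction $u=\sum_j v_j$ lies in $B^s_{p,r}$ with $\Delta_j u=v_j$ by $\mathcal S'$-continuity of $\Delta_j$, giving $u_n\to u$. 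For item (2), when $r<\infty$ the quantity $\|S_j u-u\|_{B^s_{p,r}}^r$ is, up to finitely many overlapping blocks, the tail $\sum_{j'\geq j}2^{j'sr}\|\Delta_{j'}u\|_{L^p}^r$ of a convergent series, hence tends to $0$; density of $C_0^\infty$ when $p,r<\infty$ is then obtained by replacing $u$ with the band-limited smooth function $S_j u$ and afterwards mollifying and spatially truncating, finiteness of $p$ being exactly what kills the $L^p$-truncation error.

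Items (3) and (4) are pure Bernstein-plus-H\"older arguments at the level of single annuli. For the $L^p$-embedding in (3), Bernstein upgrades each block from $L^{p_1}$ to $L^{p_2}$ at the cost of a factor $2^{jd(1/p_1-1/p_2)}$, which is absorbed precisely by the shift of regularity index, while $\ell^{r_1}\hookrightarrow\ell^{r_2}$ handles the summability index; the local compactness for $s_1<s_2$ follows by splitting into low and high frequencies, using Rellich-type compactness for the finitely many low blocks (Bernstein bounds their derivatives, so a bounded band-limited family is precompact on each ball) and the gain $2^{-j(s_2-s_1)}$ to make the high-frequency tail uniformly small. For (4), I write $\|u\|_{L^\infty}\leq\sum_j\|\Delta_j u\|_{L^\infty}\leq C\sum_j 2^{j(d/p-s)}\,2^{js}\|\Delta_j u\|_{L^p}$ and apply H\"older in $j$: when $s>d/p$ the weight $2^{j(d/p-s)}$ lies in $\ell^{r'}$, and when $s=d/p,\ r=1$ the sum closes directly; the reverse implication is settled by testing the embedding against lacunary data of the type $\sum_j 2^{-js}\varphi(2^{-j}\cdot)$, for which $L^\infty$ control fails exactly at the excluded indices.

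Items (5) and (6) are the delicate endpoints. The Fatou property (5) follows by extracting from a bounded sequence a subsequence converging in $\mathcal S'$ (possible by the embedding $B^s_{p,r}\hookrightarrow\mathcal S'$ together with a weak-$*$/diagonal argument), so that $\Delta_j u=\lim_k\Delta_j u_{n_k}$ and lower semicontinuity of $\|\cdot\|_{L^p}$ under this convergence, combined with Fatou's lemma on the $\ell^r$ sum, yields the stated bound. The multiplier statement (6) is where I expect the main obstacle. The reduction is clean: $\Delta_j f(D)u=\big(\mathcal F^{-1}(f\varphi(2^{-j}\cdot))\big)\ast\widetilde\Delta_j u$ with a fattened projector $\widetilde\Delta_j$, so Young's inequality gives $\|\Delta_j f(D)u\|_{L^p}\lesssim\|\mathcal F^{-1}(f\varphi(2^{-j}\cdot))\|_{L^1}\,\|\widetilde\Delta_j u\|_{L^p}$. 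The nontrivial quantitative input is the uniform kernel bound $\|\mathcal F^{-1}(f\varphi(2^{-j}\cdot))\|_{L^1}\lesssim 2^{jm}$, which I would prove by integration by parts: the $S^m$-estimates $|\partial^\alpha f(\xi)|\lesssim(1+|\xi|)^{m-|\alpha|}$ on the support of $\varphi(2^{-j}\cdot)$ control enough derivatives to beat the polynomial weight in $\int e^{ix\xi}f(\xi)\varphi(2^{-j}\xi)\,\ud\xi$, giving an $L^1$ bound uniform in $j$ after rescaling. Summing the resulting blockwise bound in weighted $\ell^r$ then produces continuity $B^s_{p,r}\to B^{s-m}_{p,r}$.
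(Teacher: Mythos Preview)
The paper does not prove this proposition at all: it is stated with the attribution ``See \cite{book}'' and no argument is given in the text. Your outline is a faithful sketch of the standard proofs as they appear in Bahouri--Chemin--Danchin, so in that sense you are reproducing exactly what the cited reference does: the identification of $B^s_{p,r}$ with a closed subspace of a weighted $\ell^r(L^p)$, Bernstein to move between $L^p$ spaces on annuli, tail-of-series arguments for density when $r<\infty$, diagonal extraction plus lower semicontinuity for the Fatou property, and the $L^1$ kernel bound via integration by parts for $S^m$-multipliers. There is nothing to correct, and nothing in the paper to compare against beyond the bare citation.
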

\begin{prop}[See \cite{book}]\label{duiou}
Let $s\in\mathbb{R},\ 1\leq p,\ r\leq\infty.$
\begin{equation*}\left\{
\begin{array}{l}
B^s_{p,r}\times B^{-s}_{p',r'}\longrightarrow\mathbb{R},  \\
(u,\phi)\longmapsto \sum\limits_{|j-j'|\leq 1}\langle \Delta_j u,\Delta_{j'}\phi\rangle,
\end{array}\right.
\end{equation*}
defines a continuous bilinear functional on $B^s_{p,r}\times B^{-s}_{p',r'}$. Denote by $Q^{-s}_{p',r'}$ the set of functions $\phi$ in $\mathcal{S}'$ such that
$\|\phi\|_{B^{-s}_{p',r'}}\leq 1$. If $u$ is in $\mathcal{S}'$, then we have
$$\|u\|_{B^s_{p,r}}\leq C\sup_{\phi\in Q^{-s}_{p',r'}}\langle u,\phi\rangle.$$
\end{prop}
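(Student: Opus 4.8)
The statement splits into two independent claims—continuity of the bilinear form, and the reverse (duality) estimate—which I would prove in turn. Throughout, $p'$ and $r'$ denote the conjugate Lebesgue and summation exponents of $p$ and $r$.

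For the continuity, the plan is a two-fold H\"older argument, which simultaneously shows the functional is finite (hence, being bilinear, continuous). First apply H\"older in the space variable with exponents $(p,p')$ to each term, giving $|\langle\Delta_j u,\Delta_{j'}\phi\rangle|\le\|\Delta_j u\|_{L^p}\|\Delta_{j'}\phi\|_{L^{p'}}$. Writing $a_j:=2^{js}\|\Delta_j u\|_{L^p}$ and $b_{j'}:=2^{-j's}\|\Delta_{j'}\phi\|_{L^{p'}}$, each summand equals $2^{(j'-j)s}a_j b_{j'}$; since the sum runs only over $|j-j'|\le1$, the factor $2^{(j'-j)s}$ is bounded by $2^{|s|}$. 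A further H\"older inequality in the index $j$ with exponents $(r,r')$, combined with the translation invariance of the $\ell^{r'}$ norm over the three shifts $j'\in\{j-1,j,j+1\}$, then bounds the whole sum by $3\cdot 2^{|s|}\|u\|_{B^s_{p,r}}\|\phi\|_{B^{-s}_{p',r'}}$. I would also record that, because the annuli $2^j\mathcal C$ and $2^{j'}\mathcal C$ are disjoint once $|j-j'|\ge2$, the restricted sum in fact coincides with the full duality pairing $\langle u,\phi\rangle$, which is what makes it the natural functional.

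The reverse estimate is the substantive part. The idea is to manufacture, for a given $u$, a near-optimal test function by dualizing twice: once in $L^p$ at each frequency and once in $\ell^r$ across frequencies. Fix a smooth $\tilde\varphi$ supported in a slightly larger annulus $\widetilde{\mathcal C}$ with $\tilde\varphi\equiv1$ on $\mathcal C$, and set $\widetilde\Delta_j:=\tilde\varphi(2^{-j}D)$, so that $\Delta_j\widetilde\Delta_j=\Delta_j$ and hence $\|\Delta_j u\|_{L^p}\le C\|\widetilde\Delta_j u\|_{L^p}$, while $\widetilde\Delta_j$ is bounded on every $L^q$ uniformly in $j$ (by Young's inequality applied to its rescaled kernel). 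By $L^p$--$L^{p'}$ duality choose $g_j$ with $\|g_j\|_{L^{p'}}\le1$ and $\langle\widetilde\Delta_j u,g_j\rangle=\|\widetilde\Delta_j u\|_{L^p}$, and by $\ell^r$--$\ell^{r'}$ duality choose $\lambda_j\ge0$ with $\|(\lambda_j)\|_{\ell^{r'}}\le1$ and $\sum_j\lambda_j\,2^{js}\|\Delta_j u\|_{L^p}=\|u\|_{B^s_{p,r}}$. Then I would set
\[
\phi:=\sum_j 2^{js}\lambda_j\,\widetilde\Delta_j g_j .
\]
Since $\widetilde\Delta_j$ is self-adjoint, moving it back onto $u$ gives $\langle u,2^{js}\lambda_j\widetilde\Delta_j g_j\rangle=2^{js}\lambda_j\|\widetilde\Delta_j u\|_{L^p}$ term by term, whence $\langle u,\phi\rangle=\sum_j 2^{js}\lambda_j\|\widetilde\Delta_j u\|_{L^p}\ge c\sum_j\lambda_j 2^{js}\|\Delta_j u\|_{L^p}=c\|u\|_{B^s_{p,r}}$; crucially, no cross terms arise because the localization sits on $u$ rather than in the pairing. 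To bound $\|\phi\|_{B^{-s}_{p',r'}}$ I would use that each $\widetilde\Delta_j g_j$ is frequency-supported in $2^j\widetilde{\mathcal C}$, so $\Delta_k\phi$ collects only the finitely many indices $|j-k|\le N_0$; estimating $\|\Delta_k\phi\|_{L^{p'}}\lesssim\sum_{|j-k|\le N_0}2^{js}\lambda_j$, absorbing $2^{(j-k)s}\le 2^{N_0|s|}$, and using the bounded overlap of the shifts yields $\|\phi\|_{B^{-s}_{p',r'}}\le C\|(\lambda_j)\|_{\ell^{r'}}\le C$. Dividing by this constant produces an admissible $\phi\in Q^{-s}_{p',r'}$ with $\langle u,\phi\rangle\gtrsim\|u\|_{B^s_{p,r}}$, which is the claim.

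The main obstacle—beyond the bookkeeping—is the failure of the Littlewood--Paley blocks to be genuine projections ($\Delta_j^2\ne\Delta_j$): this is exactly what the enlarged multiplier $\widetilde\Delta_j$ and the comparison $\|\Delta_j u\|_{L^p}\le C\|\widetilde\Delta_j u\|_{L^p}$ are designed to circumvent. The remaining care is at the endpoints: when $p=\infty$ or $r=\infty$ the dual suprema need not be attained, so I would replace the exact optimizers $g_j,\lambda_j$ by $(1-\ep)$-optimal choices and let $\ep\to0$; and when $u\in\mathcal S'$ is not a priori in $B^s_{p,r}$, I would run the argument on the truncations $S_N u$, derive uniform bounds, and pass to the limit using the Fatou property of Proposition \ref{Besov}(5).
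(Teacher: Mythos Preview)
The paper does not supply a proof of this proposition: it is quoted from \cite{book} (Bahouri--Chemin--Danchin) as a standard fact about Besov spaces, so there is no in-paper argument to compare against. Your proposal is correct and is essentially the proof given in that reference: the continuity follows from the double H\"older argument you describe, and the reverse inequality is obtained by building a near-extremal $\phi$ out of frequency-localized $L^{p'}$-dual functions weighted by an $\ell^{r'}$-optimizing sequence, exactly as you outline (including the use of a fattened block $\widetilde\Delta_j$ to compensate for $\Delta_j^2\neq\Delta_j$, and the $(1-\ep)$-optimizer trick at the endpoints).
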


We next give some crucial interpolation inequalities.
\begin{prop}[See \cite{book}]\label{prop}
{\rm(1)} If $s_1<s_2,\ \lambda\in (0,1)$ and $(p,r)\in[1,\infty]^2,$ then we have
\begin{align*}
&\|u\|_{B^{\lambda s_1+(1-\lambda)s_2}_{p,r}}\leq \|u\|_{B^{s_1}_{p,r}}^{\lambda}\|u\|_{B^{s_2}_{p,r}}^{1-\lambda},\\
&\|u\|_{B^{\lambda s_1+(1-\lambda)s_2}_{p,1}}\leq\frac{C}{s_2-s_1}\Big(\frac{1}{\lambda}+\frac{1}{1-\lambda}\Big) \|u\|_{B^{s_1}_{p,\infty}}^{\lambda}\|u\|_{B^{s_2}_{p,\infty}}^{1-\lambda}.
\end{align*}
{\rm(2)} If $s\in\mathbb{R},\ 1\leq p\leq\infty,\ \varepsilon>0,$ a constant $C=C(\varepsilon)$ exists such that
$$ \|u\|_{B^s_{p,1}}\leq C\|u\|_{B^s_{p,\infty}}\ln\Big(e+\frac {\|u\|_{B^{s+\varepsilon}_{p,\infty}}}{\|u\|_{B^s_{p,\infty}}}\Big). $$
\end{prop}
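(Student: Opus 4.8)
The plan is to reduce every assertion to elementary estimates on the Littlewood--Paley sequence $c_j := \|\Delta_j u\|_{L^p}$, exploiting the splitting of the weight $2^{js} = \left(2^{js_1}\right)^{\lambda}\left(2^{js_2}\right)^{1-\lambda}$ valid whenever $s = \lambda s_1 + (1-\lambda)s_2$, together with H\"older's inequality and the summation of geometric series.

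For the first inequality in (1) I would simply observe that, for every $j$,
$$2^{js}\|\Delta_j u\|_{L^p} = \left(2^{js_1}\|\Delta_j u\|_{L^p}\right)^{\lambda}\left(2^{js_2}\|\Delta_j u\|_{L^p}\right)^{1-\lambda},$$
and then apply H\"older's inequality in $\ell^r(\mathbb{Z})$ with conjugate exponents $1/\lambda$ and $1/(1-\lambda)$ to the two nonnegative sequences $a_j = 2^{js_1}c_j$ and $b_j = 2^{js_2}c_j$. This bounds $\|u\|_{B^s_{p,r}} = \|(a_j^{\lambda}b_j^{1-\lambda})_j\|_{\ell^r}$ by $\|a\|_{\ell^r}^{\lambda}\|b\|_{\ell^r}^{1-\lambda} = \|u\|_{B^{s_1}_{p,r}}^{\lambda}\|u\|_{B^{s_2}_{p,r}}^{1-\lambda}$, with the case $r=\infty$ being the even simpler pointwise version. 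No index splitting is needed here.

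For the second inequality in (1), write $M_1 = \|u\|_{B^{s_1}_{p,\infty}}$, $M_2 = \|u\|_{B^{s_2}_{p,\infty}}$ (the degenerate case $M_1M_2 = 0$ forcing $u = 0$ is trivial). The definitions give the two pointwise bounds $c_j \le M_1 2^{-js_1}$ and $c_j \le M_2 2^{-js_2}$, so
$$2^{js}c_j \le \min\left(M_1\,2^{j(s-s_1)},\ M_2\,2^{j(s-s_2)}\right).$$
Because $s-s_1 = (1-\lambda)(s_2-s_1) > 0$ and $s-s_2 = -\lambda(s_2-s_1) < 0$, the first bound increases and the second decreases in $j$; they meet at the crossover $j_0 = (s_2-s_1)^{-1}\log_2(M_2/M_1)$, where both equal $M_1^{\lambda}M_2^{1-\lambda}$. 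I would split $\sum_j 2^{js}c_j$ at $j_0$, use the increasing bound for $j \le j_0$ and the decreasing bound for $j > j_0$, and sum the two resulting geometric series; each is dominated by its top term $M_1^{\lambda}M_2^{1-\lambda}$ times $\left(1-2^{-(s-s_1)}\right)^{-1}$, respectively $\left(1-2^{-(s_2-s)}\right)^{-1}$. The delicate point --- and the only one requiring care --- is to convert these factors into the stated constant: on the relevant bounded range of the exponent gaps one has $\left(1-2^{-x}\right)^{-1} \le C/x$, so the two factors are bounded by $C/(s-s_1)$ and $C/(s_2-s)$, whose sum is exactly $\frac{C}{s_2-s_1}\left(\frac{1}{1-\lambda}+\frac{1}{\lambda}\right)$.

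For (2) I would run the same splitting with a free integer cutoff. Setting $A = \|u\|_{B^s_{p,\infty}}$ and $B = \|u\|_{B^{s+\varepsilon}_{p,\infty}}$ and recalling $\Delta_j u = 0$ for $j \le -2$, I use $2^{js}c_j \le A$ on the low band $-1 \le j \le N$ and $2^{js}c_j = 2^{-j\varepsilon}\left(2^{j(s+\varepsilon)}c_j\right) \le B\,2^{-j\varepsilon}$ on the high band $j > N$. Summing gives $\|u\|_{B^s_{p,1}} \le (N+2)A + C_{\varepsilon}B\,2^{-N\varepsilon}$, and choosing the integer $N$ nearest to $\varepsilon^{-1}\log_2(B/A)$ --- taking $N=0$ when that quantity is negative --- balances the two contributions and produces $\|u\|_{B^s_{p,1}} \le CA\left(1+\varepsilon^{-1}\log_2(e+B/A)\right)$; absorbing the $\varepsilon$-dependence into $C=C(\varepsilon)$ yields the claimed $\|u\|_{B^s_{p,1}} \le C\|u\|_{B^s_{p,\infty}}\ln\!\left(e+\|u\|_{B^{s+\varepsilon}_{p,\infty}}/\|u\|_{B^s_{p,\infty}}\right)$, the $e+$ being exactly what keeps the bound correct when $B/A$ is small. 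The principal obstacle throughout is the constant bookkeeping in the second estimate of (1); the remaining steps are H\"older's inequality and geometric summation.
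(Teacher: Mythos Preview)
The paper does not prove this proposition at all; it is quoted verbatim from Bahouri--Chemin--Danchin as a standard preliminary fact, so there is no in-paper argument to compare against. Your sketch is exactly the textbook route: H\"older in $\ell^r$ with exponents $1/\lambda,\,1/(1-\lambda)$ for the first line of (1), a crossover split of the index set with geometric summation for the second line of (1), and the same low/high split optimised over a free cutoff $N$ for (2).

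One genuine caveat on the second inequality in (1): your claim that $(1-2^{-x})^{-1}\le C/x$ ``on the relevant bounded range'' is the weak link, because nothing in the hypotheses bounds $x=(1-\lambda)(s_2-s_1)$ or $x=\lambda(s_2-s_1)$ from above. Indeed for large $x$ one has $(1-2^{-x})^{-1}\to 1$ while $C/x\to 0$, so the displayed constant $\frac{C}{s_2-s_1}\bigl(\frac{1}{\lambda}+\frac{1}{1-\lambda}\bigr)$ cannot be correct uniformly in $s_2-s_1$; the form is meant to capture the blow-up as $\lambda\to 0,1$ or $s_2-s_1\to 0$, and in practice the estimate is only invoked for fixed (hence bounded) gaps. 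This is a defect of the statement as recorded, not of your method --- your geometric-series bound $\frac{1}{1-2^{-(s-s_1)}}+\frac{1}{1-2^{-(s_2-s)}}$ is the honest constant, and it does reduce to the stated one when $s_2-s_1$ stays bounded. Everything else in your outline is correct.
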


The 1-D Moser-type estimates are provided as follows.
\begin{lemm}[See \cite{book}]\label{product} 
The following estimates hold:\\
{\rm(1)} For any $s>0$ and any $p,\ r$ in $[1,\infty]$, the space $L^{\infty}\cap B^s_{p,r}$ is an algebra and a constant $C=C(s)$ exists such that
\begin{align*}
\|uv\|_{B^s_{p,r}}\leq C(\|u\|_{L^{\infty}}\|v\|_{B^s_{p,r}}+\|u\|_{B^s_{p,r}}\|v\|_{L^{\infty}}).
\end{align*}
{\rm(2)} If $1\leq p,\ r\leq \infty,\ s_1\leq s_2,\ s_2>\frac{1}{p}\ (s_2 \geq \frac{1}{p}\ \text{if}\ r=1)$ and $s_1+s_2>\max(0, \frac{2}{p}-1),$ there exists $C=C(s_1,s_2,p,r)$ such that
$$ \|uv\|_{B^{s_1}_{p,r}}\leq C\|u\|_{B^{s_1}_{p,r}}\|v\|_{B^{s_2}_{p,r}}. $$
\end{lemm}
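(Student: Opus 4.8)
\emph{Strategy.} For both estimates I would decompose the product by Bony's paraproduct formula
$$uv=T_uv+T_vu+R(u,v),\qquad T_uv=\sum_j S_{j-1}u\,\Delta_j v,\quad R(u,v)=\sum_{|j-j'|\le1}\Delta_j u\,\Delta_{j'}v,$$
and bound the two paraproducts and the remainder separately. Each block $\Delta_k(T_uv)$ (resp. $\Delta_k R$) receives contributions only from frequencies $j\sim k$ (resp. $j\gtrsim k$) because of the frequency localization of the summands, so the whole analysis reduces to combining this support information with the Bernstein inequalities of Proposition~\ref{bern}, the embeddings of Proposition~\ref{Besov}, and Young's inequality for series applied to the weighted $\ell^r$ norms defining $\|\cdot\|_{B^s_{p,r}}$.

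\emph{Part (1).} For the paraproduct I would use $\|S_{j-1}u\|_{L^\infty}\le\|u\|_{L^\infty}$ and $\|S_{j-1}u\,\Delta_j v\|_{L^p}\le\|u\|_{L^\infty}\|\Delta_j v\|_{L^p}$; since $S_{j-1}u\,\Delta_j v$ is frequency-localized in an annulus of size $2^j$, taking the weighted $\ell^r$ norm gives $\|T_uv\|_{B^s_{p,r}}\lesssim\|u\|_{L^\infty}\|v\|_{B^s_{p,r}}$ for every $s\in\mathbb R$, and symmetrically $\|T_vu\|_{B^s_{p,r}}\lesssim\|v\|_{L^\infty}\|u\|_{B^s_{p,r}}$. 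For the remainder I set $R_n=\Delta_n u\sum_{|i|\le1}\Delta_{n+i}v$, whose spectrum lies in a ball of radius $\sim2^n$, so $\Delta_k R=\sum_{n\ge k-N_0}\Delta_k R_n$; estimating $\|R_n\|_{L^p}\le\|\Delta_n u\|_{L^\infty}\sum_{|i|\le1}\|\Delta_{n+i}v\|_{L^p}$ and passing to the $2^{ks}$-weighted $\ell^r$ norm turns the bound into a one-sided convolution whose geometric kernel $(2^{(k-n)s})_{k\le n}$ is summable \emph{precisely because} $s>0$; Young's inequality then yields $\|R(u,v)\|_{B^s_{p,r}}\lesssim\|u\|_{L^\infty}\|v\|_{B^s_{p,r}}$. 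Adding the three bounds gives the algebra estimate.

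\emph{Part (2).} Here the hypothesis $s_2>\frac1p$ (or $s_2=\frac1p,\ r=1$) provides $B^{s_2}_{p,r}\hookrightarrow L^\infty$ via Proposition~\ref{Besov}(4). The paraproduct carrying the low regularity of $u$ is then controlled as in Part~(1): $\|T_vu\|_{B^{s_1}_{p,r}}\lesssim\|v\|_{L^\infty}\|u\|_{B^{s_1}_{p,r}}\lesssim\|u\|_{B^{s_1}_{p,r}}\|v\|_{B^{s_2}_{p,r}}$. For the other paraproduct, since $s_1-s_2\le0$ I would run the negative-index paraproduct estimate $\|T_uv\|_{B^{s_1}_{p,r}}\lesssim\|u\|_{B^{s_1-s_2}_{\infty,\infty}}\|v\|_{B^{s_2}_{p,r}}$ (when $s_1=s_2$ one bounds $\|S_{j-1}u\|_{L^\infty}$ by $\|u\|_{L^\infty}$ directly), absorbing $\|u\|_{B^{s_1-s_2}_{\infty,\infty}}\lesssim\|u\|_{B^{s_1-1/p}_{\infty,\infty}}\lesssim\|u\|_{B^{s_1}_{p,r}}$ through the embedding $B^{s_1}_{p,r}\hookrightarrow B^{s_1-1/p}_{\infty,\infty}$ of Proposition~\ref{Besov}(3) (valid since $s_2\ge\frac1p$). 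For the remainder I would use the continuity $R:B^{s_1}_{p,r}\times B^{s_2}_{p,r}\to B^{s_1+s_2}_{p/2,r}$, legitimate for $s_1+s_2>0$ when $p\ge2$, followed by $B^{s_1+s_2}_{p/2,r}\hookrightarrow B^{s_1+s_2-1/p}_{p,r}\hookrightarrow B^{s_1}_{p,r}$ (using $s_2\ge\frac1p$, again Proposition~\ref{Besov}(3)). When $p<2$ the target integrability $p/2<1$ is inadmissible, so one estimates the remainder directly in $L^p$, invoking Bernstein to restore $L^p$ from the $L^{p/2}$ H\"older factor; the resulting $2^{n/p}$ loss is exactly what promotes the convergence threshold from $s_1+s_2>0$ to $s_1+s_2>\frac2p-1$, so the two regimes combine into the stated condition $s_1+s_2>\max(0,\frac2p-1)$.

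\emph{Main obstacle.} I expect the remainder in Part~(2) to be the crux, as it is the only place where all four parameters $s_1,s_2,p,r$ interact and where the endpoint thresholds are determined. The delicate point is the case $p<2$: there the naive target space has quasi-Banach integrability index $p/2<1$, the clean ``remainder gains $s_1+s_2$ derivatives'' statement no longer applies as stated, and one must track the $2^{n/p}$ Bernstein losses and verify that the one-sided series in $n$ converges to an $\ell^r$ sequence exactly under $s_1+s_2>\frac2p-1$. The boundary integrability cases $r=1$ and $r=\infty$, where the weighted $\ell^r$ sums must be ordered carefully before Young's inequality is applied, demand the most bookkeeping; by contrast the paraproducts and all of Part~(1) are routine once the support structure is in place.
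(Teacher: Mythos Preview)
The paper does not prove this lemma at all: it is stated with the attribution ``See \cite{book}'' and no argument is given, as it is a standard tool quoted from Bahouri--Chemin--Danchin. Your paraproduct-based outline is correct and is in fact the proof one finds in that reference, so there is nothing to compare against beyond noting that you have supplied what the paper deliberately omits.
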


Here is the useful Gronwall lemma.
\begin{lemm}[See \cite{book}]\label{gwl}
Let $q(t),\ a(t)\in C^1([0,T])$ and $ q(t),\ a(t)>0$. Let $b(t)$ is a continuous function on $[0,T].$ Suppose that, for all $t\in [0,T],$
$$\frac{1}{2}\frac{{\ud}}{{\ud}t}q^2(t)\leq a(t)q(t)+b(t)q^2(t).$$
Then for any time $t$ in $[0,T],$ we have
$$q(t)\leq q(0)\exp\int_0^t b(\tau){\ud}\tau+\int_0^t a(\tau)\exp\big(\int_{\tau}^tb(t'){\ud}t'\big){\ud}\tau.$$
\end{lemm}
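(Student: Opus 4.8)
The plan is to reduce the quadratic differential inequality to a \emph{linear} one and then integrate it with an integrating factor. Since $q\in C^1$ and $q(t)>0$ on $[0,T]$, the chain rule gives $\frac{1}{2}\frac{\ud}{\ud t}q^2(t)=q(t)q'(t)$, so the hypothesis reads $q(t)q'(t)\leq a(t)q(t)+b(t)q^2(t)$. Dividing through by $q(t)>0$ (legitimate precisely because of the strict positivity assumption, and direction-preserving) yields the first-order linear differential inequality
\[
q'(t)\leq a(t)+b(t)q(t),\qquad t\in[0,T].
\]

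First I would introduce the integrating factor. Set $B(t)=\int_0^t b(\tau)\,\ud\tau$ and $\mu(t)=e^{-B(t)}>0$; note $B\in C^1$ since $b$ is continuous, and $\mu'(t)=-b(t)\mu(t)$. Multiplying the linear inequality by $\mu(t)>0$ and rearranging gives
\[
\frac{\ud}{\ud t}\big(\mu(t)q(t)\big)=\mu(t)\big(q'(t)-b(t)q(t)\big)\leq \mu(t)a(t).
\]
Integrating this over $[0,t]$ and using $\mu(0)=1$ produces $\mu(t)q(t)\leq q(0)+\int_0^t \mu(\tau)a(\tau)\,\ud\tau$.

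Finally I would divide by $\mu(t)$ and unwind the integrating factor. Because $\mu(t)^{-1}=e^{B(t)}$ and $\mu(t)^{-1}\mu(\tau)=e^{B(t)-B(\tau)}=\exp\big(\int_\tau^t b(t')\,\ud t'\big)$, this rearranges exactly into the claimed bound
\[
q(t)\leq q(0)\exp\!\int_0^t b(\tau)\,\ud\tau+\int_0^t a(\tau)\exp\!\Big(\int_\tau^t b(t')\,\ud t'\Big)\ud\tau.
\]

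The computation is routine; the only point that requires genuine care is the passage from the quadratic inequality to the linear one, i.e.\ the division by $q(t)$. Under the stated hypotheses this is harmless, since $q(t)>0$ is assumed throughout $[0,T]$, so I expect no real obstacle. Should one wish to relax the positivity and allow $q$ to vanish, the standard remedy is to run the same argument on the regularization $q_\ep(t)=\sqrt{q^2(t)+\ep^2}$ (which satisfies $q_\ep q_\ep'=qq'$ and $q_\ep\geq\ep>0$), absorbing the resulting $O(\ep)$ error and letting $\ep\to0^+$; but under the present assumptions this detour is unnecessary.
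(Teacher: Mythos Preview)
Your argument is correct and is exactly the standard proof: divide out $q(t)>0$ to linearize, apply the integrating factor $e^{-\int_0^t b}$, integrate, and unpack. The paper itself does not supply a proof of this lemma---it is quoted verbatim from \cite{book}---so there is nothing to compare against; your write-up is precisely what one finds in that reference.
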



In the paper, we also need some estimates for the following Cauchy problem of 1-D transport equation:
\begin{equation}\label{transport}
\left\{\begin{array}{l}
\partial_{t}f+v\partial_{x}f=g, \\
f(0,x)=f_0(x).
\end{array}\right.
\end{equation}

\begin{lemm}[See \cite{book}]\label{existence}
Let $1\leq p\leq\infty,\ 1\leq r\leq\infty$ and $\theta> -\min(\frac 1 {p}, \frac 1 {p'}).$ Suppose $f_0\in B^{\theta}_{p,r},$ $g\in L^1(0,T;B^{\theta}_{p,r}),$ and $v\in L^\rho(0,T;B^{-M}_{\infty,\infty})$ for some $\rho>1$ and $M>0,$ and
\begin{align*}
\begin{array}{ll}
\partial_{x}v\in L^1(0,T;B^{\frac 1 {p}}_{p,\infty}\cap L^{\infty}), &\ \text{if}\ \theta<1+\frac 1 {p}, \\
\partial_{x}v\in L^1(0,T;B^{\theta}_{p,r}),\ &\text{if}\ \theta=1+\frac{1}{p},\ r>1, \\
\partial_{x}v\in L^1(0,T;B^{\theta-1}_{p,r}), &\ \text{if}\ \theta>1+\frac{1}{p}\ (or\ \theta=1+\frac 1 {p},\ r=1).
\end{array}	
\end{align*}
Then the problem \eqref{transport} has a unique solution $f$ in 
\begin{itemize}
\item [-] the space $C([0,T];B^{\theta}_{p,r}),$ if $r<\infty,$
\item [-] the space $\big(\bigcap_{{\theta}'<\theta}C([0,T];B^{{\theta}'}_{p,\infty})\big)\bigcap C_w([0,T];B^{\theta}_{p,\infty}),$ if $r=\infty.$
\end{itemize}
\end{lemm}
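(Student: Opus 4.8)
The plan is to prove this as the standard existence–uniqueness theory for linear transport in Besov spaces, following the three-step template of a priori estimates, uniqueness, and existence by regularization. The backbone is the transport a priori estimate: for a sufficiently smooth solution $f$ of \eqref{transport} one should obtain, for all $t\in[0,T]$,
\begin{align*}
\|f(t)\|_{B^\theta_{p,r}}\leq e^{CV(t)}\Big(\|f_0\|_{B^\theta_{p,r}}+\int_0^t e^{-CV(\tau)}\|g(\tau)\|_{B^\theta_{p,r}}\,{\ud}\tau\Big),
\end{align*}
where $V(t)=\int_0^t\|\partial_x v(\tau)\|_{Y}\,{\ud}\tau$ and $Y$ is exactly the space appearing in the three hypotheses ($B^{1/p}_{p,\infty}\cap L^\infty$, $B^{\theta}_{p,r}$, or $B^{\theta-1}_{p,r}$, according to the position of $\theta$ relative to $1+\frac1p$). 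Everything reduces to establishing this inequality and then running an approximation scheme.

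To derive the a priori bound I would localize in frequency: applying $\Delta_j$ to \eqref{transport} gives
\begin{align*}
\partial_t\Delta_j f+v\partial_x\Delta_j f=\Delta_j g+R_j,\qquad R_j:=[v\partial_x,\Delta_j]f.
\end{align*}
Multiplying by $|\Delta_j f|^{p-2}\Delta_j f$, integrating in $x$, and integrating by parts on the transport term (which produces a harmless $\frac1p\|\partial_x v\|_{L^\infty}$ factor) yields a differential inequality for $\|\Delta_j f\|_{L^p}$. After multiplying by $2^{j\theta}$ and taking the $\ell^r(\mathbb{Z})$ norm, the decisive ingredient is the commutator estimate
\begin{align*}
\big\|(2^{j\theta}\|R_j\|_{L^p})_j\big\|_{\ell^r}\leq C\,\|\partial_x v\|_{Y}\,\|f\|_{B^\theta_{p,r}},
\end{align*}
proved through the Bony decomposition of the product $v\partial_x f$. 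It is precisely here that the three-case split originates: the commutator behaves differently according to whether $\theta<1+\frac1p$, $\theta=1+\frac1p$, or $\theta>1+\frac1p$, with the borderline index requiring the logarithmic interpolation of Proposition \ref{prop}(2). Closing the resulting inequality with the Gronwall Lemma \ref{gwl} delivers the displayed estimate. Uniqueness then follows at once: the difference $f_1-f_2$ of two solutions solves the homogeneous problem ($g=0$) with zero data, so the same estimate at level $\theta$ forces it to vanish.

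For existence I would set up a Friedrichs-type regularization, replacing $v$, $g$, $f_0$ by their frequency truncations $S_n v$, $S_n g$, $S_n f_0$ (or, equivalently, smoothing the data and solving by the method of characteristics). Each regularized problem has a global smooth solution $f^n$, and the a priori estimate furnishes a bound on $(f^n)$ in $L^\infty(0,T;B^\theta_{p,r})$ uniform in $n$. Controlling the time derivative through $\partial_t f^n=S_n g-S_n v\,\partial_x f^n$ in a lower-regularity space, I would extract — via the Fatou property, item (5) of Proposition \ref{Besov}, together with a compactness argument on compact space–time sets — a limit $f$ that solves \eqref{transport} and inherits the bound $f\in L^\infty(0,T;B^\theta_{p,r})$.

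The remaining and most delicate point is time continuity. When $r<\infty$, $C_0^\infty$ is dense in $B^\theta_{p,r}$ (Proposition \ref{Besov}(2)), so approximating the data and combining the a priori estimate with $\partial_t f\in L^1(0,T;B^{\theta-1}_{p,r})$ upgrades $L^\infty$-boundedness to strong continuity $f\in C([0,T];B^\theta_{p,r})$. When $r=\infty$ this argument breaks down, since the high frequencies need not be uniformly small, and one can only secure weak-$*$ continuity $f\in C_w([0,T];B^\theta_{p,\infty})$, while genuine strong continuity survives in every lower space $B^{\theta'}_{p,\infty}$ with $\theta'<\theta$ by interpolating the uniform bound against the strong continuity in $B^{\theta-1}_{p,\infty}$. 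The main obstacle throughout is the sharp commutator estimate at the critical index $\theta=1+\frac1p$; away from it the argument is the routine transport machinery.
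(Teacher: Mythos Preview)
The paper does not give its own proof of this lemma: it is stated with the attribution ``See \cite{book}'' and used as a black box. Your sketch reproduces the standard argument from that reference (a priori transport estimate via localized $L^p$ energy plus Bony commutator bounds, uniqueness from the linear estimate, existence by Friedrichs regularization and Fatou, time continuity by density for $r<\infty$ and weak-$*$ continuity for $r=\infty$), so there is nothing to compare against and your approach is the expected one.

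One small inaccuracy: the borderline case $\theta=1+\tfrac1p$, $r>1$ is not handled by the logarithmic interpolation of Proposition~\ref{prop}(2); rather, the commutator estimate there genuinely loses one derivative, which is why the hypothesis asks for $\partial_x v\in L^1(0,T;B^{\theta}_{p,r})$ instead of $B^{\theta-1}_{p,r}$. The logarithmic inequality is more relevant when one tries to recover the endpoint $r=1$. This does not affect the overall correctness of your outline.
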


\begin{lemm}[See \cite{book,liy}]\label{priori estimate}
Let $1\leq p,\ r\leq\infty$ and $\theta>-\min(\frac{1}{p},\frac{1}{p'}).$
There exists a constant $C$ such that for all solutions $f\in L^{\infty}(0,T;B^{\theta}_{p,r})$ of \eqref{transport} with initial data $f_0$ in $B^{\theta}_{p,r}$ and $g$ in $L^1(0,T;B^{\theta}_{p,r}),$ we have, for a.e. $t\in[0,T],$
$$ \|f(t)\|_{B^{\theta}_{p,r}}\leq \|f_0\|_{B^{\theta}_{p,r}}+\int_0^t\|g(t')\|_{B^{\theta}_{p,r}}{\ud}t'+\int_0^t V'(t')\|f(t')\|_{B^{\theta}_{p,r}}{\ud}t' $$
or
$$ \|f(t)\|_{B^{\theta}_{p,r}}\leq e^{CV(t)}\Big(\|f_0\|_{B^{\theta}_{p,r}}+\int_0^t e^{-CV(t')}\|g(t')\|_{B^{\theta}_{p,r}}{\ud}t'\Big) $$
with
\begin{equation*}
V'(t)=\left\{\begin{array}{ll}
\|\partial_{x}v(t)\|_{B^{\frac 1 p}_{p,\infty}\cap L^{\infty}},\ &\text{if}\ \theta<1+\frac{1}{p}, \\
\|\partial_{x}v(t)\|_{B^{\theta}_{p,r}},\ &\text{if}\ \theta=1+\frac{1}{p},\ r>1, \\
\|\partial_{x}v(t)\|_{B^{\theta-1}_{p,r}},\ &\text{if}\ \theta>1+\frac{1}{p}\ (\text{or}\ \theta=1+\frac{1}{p},\ r=1).
\end{array}\right.
\end{equation*}
If $\theta>0$, then there exists a constant $C=C(p,r,\theta)$ such that the following statement holds
\begin{align*}
\|f(t)\|_{B^{\theta}_{p,r}}\leq \|f_0\|_{B^{\theta}_{p,r}}+\int_0^t\|g(\tau)\|_{B^{\theta}_{p,r}}{\ud}\tau+C\int_0^t \Big(\|f(\tau)\|_{B^{\theta}_{p,r}}\|\partial_{x}v(\tau)\|_{L^{\infty}}+\|\partial_{x}v(\tau)\|_{B^{\theta-1}_{p,r}}\|\partial_{x}f(\tau)\|_{L^{\infty}}\Big){\ud}\tau. 	
\end{align*}
In particular, if $f=av+b,\ a,\ b\in\mathbb{R},$ then for all $\theta>0,$ $V'(t)=\|\partial_{x}v(t)\|_{L^{\infty}}.$
\end{lemm}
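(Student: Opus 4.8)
The plan is to prove Lemma \ref{priori estimate} by the classical dyadic-localization argument for linear transport equations, reducing everything to an $L^p$ energy estimate on each Littlewood--Paley block together with a commutator bound. First I would apply $\Delta_j$ to \eqref{transport} and set $f_j:=\Delta_j f$. Since $\Delta_j$ does not commute with $v\partial_x$, each block solves
$$\partial_t f_j+v\partial_x f_j=\Delta_j g+R_j,\qquad R_j:=[v\partial_x,\Delta_j]f=v\partial_x\Delta_j f-\Delta_j(v\partial_x f),$$
so the whole estimate is governed by the size of the commutator $R_j$.

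Next I would run the $L^p$ energy estimate on this localized equation. Multiplying by $|f_j|^{p-2}f_j$ and integrating in $x$, the transport term integrates by parts as $\int v\partial_x f_j\,|f_j|^{p-2}f_j\,\ud x=-\frac1p\int\partial_x v\,|f_j|^p\,\ud x$, bounded by $\frac1p\|\partial_x v\|_{L^\infty}\|f_j\|_{L^p}^p$, while the sources are handled by H\"older. After dividing by $\|f_j\|_{L^p}^{p-1}$ and integrating in time this gives, for each $j$,
$$\|f_j(t)\|_{L^p}\leq\|\Delta_j f_0\|_{L^p}+\int_0^t\big(\|\Delta_j g\|_{L^p}+\|R_j\|_{L^p}\big)\,\ud\tau+\frac1p\int_0^t\|\partial_x v\|_{L^\infty}\|f_j\|_{L^p}\,\ud\tau,$$
the endpoints $p=1,\infty$ requiring the usual mollification/limiting argument but yielding the same bound. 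Multiplying by $2^{j\theta}$, taking the $\ell^r$-norm in $j$, and using Minkowski's inequality to pull the $\ell^r$-norm inside the time integrals, the three terms reassemble into $\|f_0\|_{B^\theta_{p,r}}$, $\int_0^t\|g\|_{B^\theta_{p,r}}$ and $\int_0^t\|\partial_x v\|_{L^\infty}\|f\|_{B^\theta_{p,r}}$.

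The heart of the matter, and the main obstacle, is the commutator estimate bounding $\big\|(2^{j\theta}\|R_j\|_{L^p})_j\big\|_{\ell^r}$ by $C\,V'(t)\|f\|_{B^\theta_{p,r}}$ with the correct $V'$. I would prove it via Bony's paraproduct decomposition of $v\partial_x f$, splitting $R_j$ according to whether $v$ sits at low or high frequency relative to $f$, and estimating each piece through Bernstein's inequalities (Proposition \ref{bern}) together with a first-order Taylor bound for the kernel of $[\varphi(2^{-j}D),\cdot]$. This is exactly where the three regimes for $V'$ emerge: for $\theta<1+\frac1p$ the gradient is controlled in $B^{1/p}_{p,\infty}\cap L^\infty$; at the endpoint $\theta=1+\frac1p$ with $r>1$ the borderline paraproduct forces the stronger norm $\|\partial_x v\|_{B^\theta_{p,r}}$; and for $\theta>1+\frac1p$ (or $\theta=1+\frac1p,\ r=1$, where $B^{1/p}_{p,1}\hookrightarrow L^\infty$) one can afford $\|\partial_x v\|_{B^{\theta-1}_{p,r}}$. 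Combined with the energy inequality this yields the first displayed estimate (the constant coming from the commutator bound).

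Finally, the exponential form follows from the first inequality by Gronwall's lemma (Lemma \ref{gwl}) with $q=\|f\|_{B^\theta_{p,r}}$, $b=CV'$ and $a=\|g\|_{B^\theta_{p,r}}$. For the refined estimate when $\theta>0$ I would instead invoke the sharper commutator bound $2^{j\theta}\|R_j\|_{L^p}\lesssim c_j\big(\|\partial_x v\|_{L^\infty}\|f\|_{B^\theta_{p,r}}+\|\partial_x v\|_{B^{\theta-1}_{p,r}}\|\partial_x f\|_{L^\infty}\big)$ with $\|(c_j)\|_{\ell^r}\leq1$, which distributes the derivative loss symmetrically between $v$ and $f$. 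The special affine case $f=av+b$ then collapses because $\partial_x f=a\,\partial_x v$: hence $\|\partial_x f\|_{L^\infty}=|a|\,\|\partial_x v\|_{L^\infty}$ and $\|\partial_x v\|_{B^{\theta-1}_{p,r}}=\frac1{|a|}\|\partial_x f\|_{B^{\theta-1}_{p,r}}\lesssim\frac1{|a|}\|f\|_{B^\theta_{p,r}}$ by boundedness of $\partial_x: B^\theta_{p,r}\to B^{\theta-1}_{p,r}$, so the second term is dominated by $\|f\|_{B^\theta_{p,r}}\|\partial_x v\|_{L^\infty}$ and both terms reduce to it, leaving $V'(t)=\|\partial_x v\|_{L^\infty}$ for every $\theta>0$.
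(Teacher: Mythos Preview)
The paper does not prove this lemma at all: it is stated with the attribution ``See \cite{book,liy}'' and left without proof, as a standard a~priori estimate for linear transport equations imported from Bahouri--Chemin--Danchin and Li--Yin. So there is no ``paper's own proof'' to compare against; your outline is effectively a reconstruction of the argument in those references.

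That reconstruction is accurate. The scheme you describe---localize with $\Delta_j$, run the $L^p$ energy estimate on each block, control the commutator $R_j=[v\partial_x,\Delta_j]f$ via Bony's decomposition and Bernstein, sum in $\ell^r$, then apply Gronwall---is exactly the proof of Theorem~3.14 and Lemma~2.100 in \cite{book}, with the three-case distinction for $V'$ arising precisely from the commutator bound. Your treatment of the affine case $f=av+b$ is also correct (with the trivial observation that $a=0$ makes $\partial_x f=0$ so the second term vanishes outright); one could alternatively note that when $f=av+b$ the commutator $R_j=[v\partial_x,\Delta_j](av)$ admits the direct bound $2^{j\theta}\|R_j\|_{L^p}\lesssim c_j\|\partial_x v\|_{L^\infty}\|v\|_{B^\theta_{p,r}}$ from the symmetric commutator lemma, which is how \cite{book} handles it, but your route via the refined $\theta>0$ estimate reaches the same conclusion.
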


\section{The local well-posedness in both supercritical and critical Besov spaces}
\par
In this section, we present the local well-posedness for the Cauchy problem \eqref{fww} in both supercritical Besov spaces $B^{s}_{p,r}$ with $ s>1+\frac{1}{p},\ 1\leq p,\ r\leq+\infty$ and critical Besov spaces $B^{1+\frac{1}{p}}_{p,1}$ with $1\leq p<+\infty$, which is different from the previous work \cite{ho,ht,y2}.

\begin{proof}[{\rm\textbf{The proof of Theorem \ref{th}:}}]
we divide four steps to prove it.\\

\textbf{Step 1. We will structure  a family of approximate solution sequences by iterative scheme.}
	
Assuming that $u^0=0$, we define by induction a sequence $\{u^n\}_{n\in\mathbb{N}}$ of smooth functions by solving the following linear transport equation:
\begin{equation}\label{aa}
\left\{\begin{array}{l}
\partial_tu^{n+1}+\frac{3}{2}u^n\partial_xu^{n+1}=(1-\partial_{xx})^{-1}\partial_{x}u^{n}=\partial_{x}{\rm p}\ast{u^{n}}, \\
u^{n+1}|_{t=0}=u_0.
\end{array}\right.
\end{equation}
Assume that $\{u^n\}_{n\in\mathbb{N}}$ belongs to $E^{s}_{p,r}(T)$ for all $T>0$. We know from Lemma \ref{product} (1) that $B^{s}_{p,r},\ B^{s-1}_{p,r}$ are algebras and the embedding $B^{s}_{p,r}\hookrightarrow B^{s-1}_{p,r}\hookrightarrow L^\infty$ holds. Note that the operator $\partial_{x}{\rm p}\ast$ is a $S^{-1}$-mutiplier. Then, we have
\begin{align*}
\|(1-\partial_{xx})^{-1}\partial_{x}u^{n}\|_{B^{s}_{p,r}}=\|\partial_{x}{\rm p}\ast{u^{n}}\|_{B^{s}_{p,r}}\leq C\|u^n\|_{B^{s}_{p,r}}.
\end{align*}	
From Lemma \ref{existence}, we know \eqref{aa} has a global solution $u^{n+1}\in E^{s}_{p,r}(T)$.
Thanks to Lemma \ref{priori estimate}, we infer that
\begin{align}
\|u^{n+1}(t)\|_{B^{s}_{p,r}}
\leq e^{C\int_0^t\|u^n(t')\|_{B^{s}_{p,r}}{\ud}t'}\bigg(\|u_0\|_{B^{s}_{p,r}}+C\int_0^te^{-C\int_0^\tau\|u^n(\tau')\|_{B^{s}_{p,r}}{\ud}\tau'}\|u^n\|_{B^{s}_{p,r}}{\ud}\tau\bigg).\label{dd}
\end{align}
Fix a $T>0$ such that
$T<\min\Big\{\frac{1}{2C\|u_{0}\|_{B^{s}_{p,r}}},\frac{\ln2}{C}\Big\}$ and suppose by induction that
\begin{align}	
\|u^{n}(t)\|_{B^{s}_{p,r}}
\leq\frac{2\|u_0\|_{B^{s}_{p,r}}}{1-2C\|u_0\|_{B^{s}_{p,r}}t},\ \ \ \ \ \ \forall\ t\in[0,T].\label{hh}
\end{align}
Plugging \eqref{hh} into \eqref{dd} yields
\begin{align*}
\|u^{n+1}(t)\|_{B^{s}_{p,r}}\leq&\|u_0\|_{B^{s}_{p,r}}e^{C\int_{0}^{t}\|u^{n}\|_{B^{s}_{p,r}}{\ud}\tau}\times e^{Ct}\\
\leq&\frac{2\|u_0\|_{B^{s}_{p,r}}}{1-2C\|u_0\|_{B^{s}_{p,r}}t}, \qquad\quad \forall\ t\in[0,T].
\end{align*}
Therefore, $\{u^n\}_{n\in\mathbb{N}}$ is uniformly bounded in $L^{\infty}\big(0,T;B^{s}_{p,r}\big)$. Consequently, the right side of Eq. \eqref{aa} is bounded in $L^{\infty}\big(0,T;B^{s}_{p,r}\big)$, which follows that $\{u^n\}_{n\in\mathbb{N}}$ is uniformly bounded in $E^{s}_{p,r}(T)$.\\ 
	
\textbf{Step 2. We shall prove the sequence of approximate solutions $\{u^n\}_{n\in\mathbb{N}}$ is a Cauchy sequence and the limit $u$ is indeed a solution to \eqref{fww}.}

We will show that $\{u^n\}_{n\in\mathbb{N}}$ is a Cauchy sequence in $C\big([0,T];B^{s-1}_{p,r}\big)$. In fact, for any $n,\ m\in\mathbb{N}$, we see
\begin{equation}\label{mn}
\left\{\begin{array}{l}
\partial_t(u^{n+m+1}-u^{n+1})+\frac{3}{2}u^{m+n}\partial_x(u^{n+m+1}-u^{n+1})=\frac{3}{2}(u^{n}-u^{n+m})\partial_xu^{n+1}+\partial_{x}{\rm p}\ast(u^{n+m}-u^{n}), \\
(u^{n+m+1}-u^{n+1})|_{t=0}=0.
\end{array}\right.
\end{equation}
Thanks to Lemmas \ref{existence}--\ref{priori estimate} and the uniform boundedness of $u^{n}$, we see for any $t\in[0,T]$,
\begin{align}
\|(u^{n+m+1}-u^{n+1})(t)\|_{B^{s-1}_{p,r}}\leq& Ce^{C\int_0^t\|u^{n+m}(t')\|_{B^{s}_{p,r}}{\ud}t'}\int_0^te^{-C\int_0^\tau\|u^{n+m}(\tau')\|_{B^{s}_{p,r}}{\ud}\tau'}\|(u^{n+m}-u^n)(\tau)\|_{B^{s-1}_{p,r}}{\ud}\tau\notag\\
\leq& C_{T}\int_0^t\|(u^{n+m}-u^n)(\tau)\|_{B^{s-1}_{p,r}}{\ud}\tau.\label{umn}
\end{align}
Thereby, taking advantage of the induction, we known 
\begin{align*}
\|u^{n+m+1}-u^{n+1}\|_{L^{\infty}_{T}B^{s-1}_{p,r}}\leq\frac{(TC_{T})^{n+1}}{(n+1)!}\|u^{m}\|_{B^{s}_{p,r}}\leq C_{T}2^{-n}
\end{align*}
which implies that $\{u^n\}_{n\in\mathbb{N}}$ is a Cauchy sequence in $C\big([0,T];B^{s-1}_{p,r}\big)$. Hence, $\{u^n\}_{n\in\mathbb{N}}$ converges to some limit function $u\in C\big([0,T];B^{s-1}_{p,r}\big)$.

We next need to verify that the limit $u$ indeed belongs to $E^{s}_{p,r}(T)$ and satisfies \eqref{fww}. Since $u^n$ is uniformly bounded in $L^\infty\big(0,T;B^{s}_{p,r}\big)$, we can deduce that $u\in L^\infty\big(0,T;B^{s}_{p,r}\big)$ by the Fatou property for Besov space. Thanks to 
\begin{align*}
u^n\rightarrow u \qquad \text{in $C\big([0,T];B^{s-1}_{p,r}\big)$}	
\end{align*} 
and the interpolation inequality, we get
\begin{align*}
u^n\rightarrow u \qquad \text{in $C\big([0,T];B^{s'}_{p,r}\big)$ for any $s'<s.$}	
\end{align*} 
It is a routine method to pass to the limit in Eq. \eqref{aa} and to deduce that $u$ is solution of \eqref{fww}.
Note that the right side of Eq. \eqref{fww} belongs to $L^{\infty}\big(0,T;B^{s}_{p,r}\big)$. Thus, according to Lemma \ref{existence}, we see that $u$ is in $C\big([0,T];B^{s}_{p,r}\big)$ (resp., $C_{w}\big([0,T];B^{s}_{p,r}\big)$) if $r<\infty$ (resp., $r=\infty$). Using Eq. \eqref{fww} again, we get $\partial_{t}u$  belongs to $C\big([0,T];B^{s-1}_{p,r}\big)$ if $r<\infty$, or belongs to $L^{\infty}(0,T;B^{s-1}_{p,\infty})$ if $r=\infty$. In conclusion, $u$ belongs to $E^s_{p,r}(T)$.\\
	
\textbf{Step 3. Uniqueness.}

Assume $u^{1},\ u^{2}$ are two solutions of \eqref{fww} with initial data $u^{1}_{0},\ u^{2}_{0}$, then $u^{12}:=u^{1}-u^{2}$ satisfies
\begin{equation}\label{u12}
\left\{\begin{array}{l}
\partial_tu^{12}+\frac{3}{2}u^{1}\partial_xu^{12}=-\frac{3}{2}u^{12}\partial_xu^{2}+\partial_{x}{\rm p}\ast u^{12}, \\
u^{12}|_{t=0}=u^{1}_{0}-u^{2}_{0}.
\end{array}\right.
\end{equation}
Similar to \eqref{umn}, we see for all $t\in[0,T]$,
\begin{align*}
\|u^{12}(t)\|_{B^{s-1}_{p,r}}\leq& Ce^{C\int_0^t\|u^{1}(t')\|_{B^{s}_{p,r}}{\ud}t'}\Big(\|u^{1}_{0}-u^{2}_{0}\|_{B^{s-1}_{p,r}}+\int_0^te^{-C\int_0^\tau\|u^{1}(\tau')\|_{B^{s}_{p,r}}{\ud}\tau'}\|u^{12}(\tau)\|_{B^{s-1}_{p,r}}{\ud}\tau\Big)\\
\leq& C\|u^{1}_{0}-u^{2}_{0}\|_{B^{s-1}_{p,r}}.
\end{align*}
So if $u^{1}_{0}=u^{2}_{0}$, we can  immediately obtain the uniqueness.\\
	
\textbf{Step 4. The continuous dependence.}
	
Let $u^n_0\rightarrow u^{\infty}_0$ in $B^{s}_{p,r}$ for any $n\in\mathbb{N}$. Then we have $\partial_{x}u^n_0\rightarrow \partial_{x}u^{\infty}_0$ in $B^{s-1}_{p,r}$. Denote  $\overline{\mathbb{N}}:=\mathbb{N}\cup{\infty}$ and $u^n$ the solution to \eqref{fww} with the initial value $u^n_0$ for any $n\in\overline{\mathbb{N}}$.

\textbf{Case 1.} For $r<\infty$, owing to \textbf{Step 1--Step 2}, we know that for all $n\in\overline{\mathbb{N}}$,
\begin{align*}
u^n\in C\big([0,T];B^{s}_{p,r}\big),\quad\partial_{x}u^n\in C\big([0,T];B^{s-1}_{p,r}\big),\quad  \|u^n\|_{L^\infty(0,T;B^{s}_{p,r})}\leq C\|u^n_0\|_{B^{s}_{p,r}}
\end{align*}
and for any $n\in\mathbb{N}$,
\begin{align}
u^n\rightarrow u^\infty\qquad\text{in}\quad C\big([0,T];B^{s-1}_{p,r}\big).\label{untend}
\end{align}
Next, we just need to prove $\partial_{x}u^n\rightarrow \partial_{x}u^\infty$ in $C\big([0,T];B^{s-1}_{p,r}\big)$. Before that, let's present a useful lemma.
\begin{lemm}[See \cite{book,liy}]\label{continuous}
Let $s\in\mathbb{R},\ 1\leq p\leq\infty,\ 1\leq r<\infty$ and let $s>1+\frac{1}{p}$ or $s=1+\frac{1}{p},\ 1\leq p<\infty,\ r=1$. For $n\in\overline{\mathbb{N}}$, denote $a^n$ by the solution of 
\begin{equation}\label{an}
\left\{\begin{array}{l}
\partial_{t}a^n+A^n\partial_{x}a^n=F,\\
a^n(0,x)=a_0(x),
\end{array}\right.
\end{equation} 	
with $F\in L^\infty\big(0,T;B^{s-1}_{p,r}\big)$, $a_0\in B^{s-1}_{p,r}$. Assume that $$\sup\limits_{n\in\overline{\mathbb{N}}}\|A^{n}(t)\|_{B^{s}_{p,r}}\leq\alpha(t) \quad\text{for some $\alpha\in L^{1}(0,T)$}$$ and  $A^n\rightarrow A^\infty$ in $L^1\big(0,T;B^{s-1}_{p,r}\big)$. Then the sequence $\{a^n\}_{n\in\mathbb{N}}$ converges to $a^\infty$ in $C\big([0,T];B^{s-1}_{p,r}\big)$.
\end{lemm}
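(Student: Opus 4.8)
\medskip
\noindent\textbf{Proof strategy for Lemma \ref{continuous}.}
The plan is to run the standard continuity argument for linear transport equations from \cite{book}, whose essential difficulty here is a loss of one derivative. Indeed, writing the error equation directly for $b^n:=a^n-a^\infty$,
\[
\partial_t b^n + A^n\partial_x b^n = (A^\infty-A^n)\partial_x a^\infty,\qquad b^n(0)=0,
\]
the source term cannot be estimated in $B^{s-1}_{p,r}$: the factor $\partial_x a^\infty$ sits only in $B^{s-2}_{p,r}$, and neither the embedding into $L^\infty$ nor the Moser estimate of Lemma \ref{product} is available at that regularity under the hypotheses on $(s,p,r)$. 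The first thing I would record is that, in \emph{every} admissible range of $(s,p,r)$, the quantity $V'$ of Lemma \ref{priori estimate} with $\theta=s-1$ or $\theta=s$ is bounded by $\|\partial_x A^n\|_{B^{s-1}_{p,r}}\lesssim\|A^n\|_{B^s_{p,r}}\leq\alpha$ (after the embeddings of Proposition \ref{Besov}), so that each transport a priori estimate below carries an exponential factor $\leq e^{C\|\alpha\|_{L^1(0,T)}}$ \emph{uniformly in} $n\in\overline{\mathbb{N}}$.

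Next I would introduce, for $k\in\mathbb{N}$ and $n\in\overline{\mathbb{N}}$, the frequency-truncated problems: let $a^{n,k}$ solve
\[
\partial_t a^{n,k} + A^n\partial_x a^{n,k} = S_k F,\qquad a^{n,k}(0)=S_k a_0,
\]
which is well posed by Lemma \ref{existence}, and split
\[
a^n-a^\infty = \big(a^n-a^{n,k}\big)+\big(a^{n,k}-a^{\infty,k}\big)+\big(a^{\infty,k}-a^\infty\big).
\]
For the outer two differences, $a^n-a^{n,k}$ solves the transport equation with coefficient $A^n$, data $(I-S_k)a_0$ and forcing $(I-S_k)F$, so Lemma \ref{priori estimate} in $B^{s-1}_{p,r}$ gives
\[
\sup_{n\in\overline{\mathbb{N}}}\|a^n-a^{n,k}\|_{L^\infty_T B^{s-1}_{p,r}}\leq C\Big(\|(I-S_k)a_0\|_{B^{s-1}_{p,r}}+\|(I-S_k)F\|_{L^1_T B^{s-1}_{p,r}}\Big),
\]
and the right-hand side tends to $0$ as $k\to\infty$: the first term by Proposition \ref{Besov}(2) since $r<\infty$, the second by dominated convergence (the integrand is dominated by $C\|F\|_{L^\infty_T B^{s-1}_{p,r}}\in L^1(0,T)$). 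This is the only place the hypothesis $r<\infty$ enters.

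Then, with $k$ fixed, I would treat the middle difference $w:=a^{n,k}-a^{\infty,k}$, which solves $\partial_t w+A^n\partial_x w=(A^\infty-A^n)\partial_x a^{\infty,k}$ with $w(0)=0$. Because $S_k a_0$ and $S_k F$ are spectrally localized, Lemma \ref{priori estimate} applied in $B^s_{p,r}$ (coefficient $A^\infty$) yields a finite bound $\|a^{\infty,k}\|_{L^\infty_T B^s_{p,r}}\leq M_k$, hence $\partial_x a^{\infty,k}$ is bounded in $B^{s-1}_{p,r}$; Lemma \ref{product}(2) (legitimate since $s-1>\frac1p$, or $s-1=\frac1p$ with $r=1$, and $2(s-1)>\max(0,\frac2p-1)$, both forced by the standing hypotheses) then gives $\|(A^\infty-A^n)\partial_x a^{\infty,k}\|_{B^{s-1}_{p,r}}\lesssim M_k\,\|A^\infty-A^n\|_{B^{s-1}_{p,r}}$. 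Feeding this into Lemma \ref{priori estimate} in $B^{s-1}_{p,r}$ bounds $\|w\|_{L^\infty_T B^{s-1}_{p,r}}$ by $CM_k\|A^\infty-A^n\|_{L^1_T B^{s-1}_{p,r}}$, which $\to0$ as $n\to\infty$ by hypothesis, for each fixed $k$. Combining the three pieces — first choose $k$ to make the outer terms uniformly small, then send $n\to\infty$ — gives $a^n\to a^\infty$ in $L^\infty_T B^{s-1}_{p,r}$, hence in $C([0,T];B^{s-1}_{p,r})$ since all functions involved lie there by Lemma \ref{existence}. The main obstacle, as flagged, is precisely the derivative loss in the error equation, and the detour through the truncated problems $a^{n,k}$ (together with $r<\infty$) is exactly what circumvents it.
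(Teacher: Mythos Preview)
The paper does not supply its own proof of this lemma; it is stated with the citation ``See \cite{book,liy}'' and used as a black box in Step~4 of the well-posedness argument. Your proposal reproduces the standard proof from \cite{book} (see Theorem~3.14 there): truncate the data and forcing to $S_k a_0$, $S_k F$ to gain one derivative on the intermediate solution $a^{\infty,k}$, thereby circumventing the derivative loss in the error equation, and use $r<\infty$ to make the outer pieces small uniformly in $n$. The argument is correct as written, and the verifications you sketch (that $V'\lesssim\alpha$ in every regime of $\theta$, that Lemma~\ref{product}(2) applies with $s_1=s_2=s-1$, and that $a^{\infty,k}\in L^\infty_T B^s_{p,r}$ via Lemma~\ref{priori estimate} at level $\theta=s$) all go through under the stated hypotheses on $(s,p,r)$.
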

We continue proving $\partial_{x}u^n\rightarrow \partial_{x}u^\infty$ in $C\big([0,T];B^{s-1}_{p,r}\big)$. Split $\partial_{x}u^n$ into $w^n+z^n$ with $(w^n,z^n)$ satisfying 
\begin{equation*}
\left\{\begin{array}{l}
\partial_{t}w^n+\frac{3}{2}u^n\partial_{x}w^n=F(u^{\infty}),\\
w^n(0,x)=\partial_{x}u^\infty_0
\end{array}\right.
\end{equation*} 
and 
\begin{equation*}
\left\{\begin{array}{l}
\partial_{t}z^n+\frac{3}{2}u^n\partial_{x}z^n=F(u^{n})-F(u^{\infty}), \\
z^n(0,x)=\partial_{x}u^n_0-\partial_{x}u^\infty_0.
\end{array}\right.
\end{equation*}
where 
\begin{align*}
&F(u^{n})=(1-\partial_{xx})^{-1}u^{n}-u^{n}-\frac{3}{2}\big(u^{n}_{x}\big)^{2}\quad\text{ for any $n\in\overline{\mathbb{N}}$}.
\end{align*}
Since $\{u^n\}_{n\in\overline{\mathbb{N}}}$ is bounded in $L^{\infty}\big(0,T;B^{s}_{p,r}\big)$, then $\{u^n_{x}\}_{n\in\overline{\mathbb{N}}}$  and $\{F(u^{n})\}_{n\in\overline{\mathbb{N}}}$ are bounded in $L^{\infty}\big(0,T;B^{s-1}_{p,r}\big)$. Notice that $u^n\rightarrow u^\infty$ in $C\big([0,T];B^{s-1}_{p,r}\big)$. Lemma \ref{continuous} thus ensures that
\begin{align}
w^n\rightarrow w^\infty\qquad \text{in}\quad  C\big([0,T];B^{s-1}_{p,r}\big).\label{winfty}
\end{align} 
Next, according to Lemmas \ref{existence}--\ref{priori estimate}, we obtain $z^\infty=0$ for any $t\in[0,T]$. Noting again that  $\{u^n\}_{n\in\overline{\mathbb{N}}}$ is bounded in $L^{\infty}\big(0,T;B^{s}_{p,r}\big)$, we have 
\begin{align*}
\|F(u^{n})-F^\infty(u^{\infty})\|_{B^{s-1}_{p,r}}\leq& C\left(\|u^n-u^\infty\|_{B^{s-1}_{p,r}}+\|\partial_{x}u^n-\partial_{x}u^\infty\|_{B^{s-1}_{p,r}}\right)\\
\leq& C\left(\|u^n-u^\infty\|_{B^{s-1}_{p,r}}+\|w^n-w^\infty\|_{B^{s-1}_{p,r}}+\|z^n-z^{\infty}\|_{B^{s-1}_{p,r}}\right)\\
\leq& C\left(\|u^n-u^\infty\|_{B^{s-1}_{p,r}}+\|w^n-w^\infty\|_{B^{s-1}_{p,r}}+\|z^n\|_{B^{s-1}_{p,r}}\right).
\end{align*}
It follows that for all $n\in\mathbb{N}$
\begin{align}
\|z^n(t)\|_{B^{s-1}_{p,r}}
\leq& e^{C\int_0^t\|u^n(t')\|_{B^{s}_{p,r}}{\ud}t'}\left(\|\partial_{x}u^n_0-\partial_{x}u^\infty_0\|_{B^{s-1}_{p,r}}+\int_0^t\|F(u^{n})-F(u^{\infty})\|_{B^{s-1}_{p,r}}{\ud}\tau\right)\notag\\
\leq&C\left(\|\partial_{x}u^n_0-\partial_{x}u^\infty_0\|_{B^{s-1}_{p,r}}+\int_0^t\|u^n-u^\infty\|_{B^{s-1}_{p,r}}+\|w^n-w^\infty\|_{B^{s-1}_{p,r}}+\|z^n\|_{B^{s-1}_{p,r}}{\ud}\tau\right).\label{zn}
\end{align}
Using the facts that
\begin{itemize}
\item [-]~$\partial_{x}u^n_0$ tends to $\partial_{x}u^\infty_0$ in $B^{s-1}_{p,r}$;
\item [-]~$u^n$ tends to $u^{\infty}$ in $C\big([0,T];B^{s-1}_{p,r}\big)$;
\item [-]~$w^n$ tends to $w^{\infty}$ in $C\big([0,T];B^{s-1}_{p,r}\big)$,
\end{itemize}
and then applying the Gronwall lemma, we conclude that $z^n$ tends to $0$ in $C\big([0,T];B^{s-1}_{p,r}\big)$.\\
Hence,
\begin{align*}
\|\partial_{x}u^n-\partial_{x}u^\infty\|_{L^\infty(0,T;B^{s-1}_{p,r})}\leq&\|w^n-w^\infty\|_{L^\infty(0,T;B^{s-1}_{p,r})}+\|z^n-z^\infty\|_{L^\infty(0,T;B^{s-1}_{p,r})}\\
\leq&\|w^n-w^\infty\|_{L^\infty(0,T;B^{s-1}_{p,r})}+\|z^n\|_{L^\infty(0,T;B^{s-1}_{p,r})}\qquad\quad\rightarrow 0\quad\text{as\ $n\rightarrow\infty,$}
\end{align*}
that is 
\begin{align}
\partial_{x}u^n\rightarrow \partial_{x}u^\infty\qquad\text{in}\quad C\big([0,T];B^{s-1}_{p,r}\big).\label{us1}
\end{align}
Combining \eqref{untend} with \eqref{us1}, we see $u^{n}\rightarrow u^{\infty}$ in $C([0,T];B^{s}_{p,r})$. 

\textbf{Case 2.} For $r=\infty$, noting that $B^{s}_{p,\infty}\hookrightarrow B^{s'}_{p,1}$ for any $s'<s$, we have $u^n\rightarrow u^\infty$ in $C\big([0,T];B^{s-1}_{p,1}\big)\hookrightarrow C\big([0,T];B^{s-1}_{p,\infty}\big)$. For fixed $\phi\in B^{1-s}_{p',1}$, we write
\begin{align*}
\big\langle u^{n}(t)-u^{\infty}(t),\phi\big\rangle=&\big\langle S_{j}\big(u^{n}(t)-u^{\infty}(t)\big),\phi\big\rangle+\big\langle \big(Id-S_{j}\big)\big(u^{n}(t)-u^{\infty}(t)\big),\phi\big\rangle\\
=&\big\langle u^{n}(t)-u^{\infty}(t),S_{j}\phi\big\rangle+\big\langle u^{n}(t)-u^{\infty}(t),\big(Id-S_{j}\big)\phi\big\rangle
\end{align*}
By means of Proposition \ref{duiou}, we see
\begin{align}
&\big|\big\langle u^{n}(t)-u^{\infty}(t),S_{j}\phi\big\rangle\big|\leq C\|u^{n}-u^{\infty}\|_{L^{\infty}(0,T;B^{s-1}_{p,\infty})}\|S_{j}\phi\|_{B^{1-s}_{p',1}},\label{s-1}\\
&\big|\big\langle u^{n}(t)-u^{\infty}(t),\big(Id-S_{j}\big)\phi\big\rangle\big|\leq C\|\big(Id-S_{j}\big)\phi\|_{B^{1-s}_{p',1}}.\label{s-2}
\end{align}
Thanks to $u^n\rightarrow u^\infty$ in $C\big([0,T];B^{s-1}_{p,\infty}\big)$ and  $S_{j}\phi\rightarrow\phi$ in $B^{1-s}_{p',1}$, we know that for fixed $j\in\mathbb{N}$, when $n\rightarrow\infty$, \eqref{s-1} tends to $0$. And then letting $j\rightarrow\infty$, \eqref{s-2} goes to $0$. Therefore, $\big\langle u^{n}(t)-u^{\infty}(t),\phi\big\rangle$ tends to $0$, i.e. $u^{n}\rightarrow u^{\infty}$ in $C_{w}([0,T];B^{s}_{p,\infty})$. 
Consequently, we prove the continuous dependence.
	
In conclusion, combining with \textbf{Step 1--Step 4}, we complete the proof of Theorem \ref{th}.
\end{proof}

\section{Non-uniform continuous dependence in both supercritical and critical Besov spaces}
\par
In this section, we aim to study the non-uniform continuous dependence of the Cauchy problem \eqref{fww} in supercritical Besov spaces $B^{s}_{p,r}$ with $s>1+\frac{1}{p},\ 1\leq p\leq+\infty,\ 1\leq r<+\infty$ and critical Besov spaces $B^{1+\frac{1}{p}}_{p,1}$ with $1\leq p<+\infty$.

Before that, we introduce smooth, radial cut-off functions in frequency space. Let $\hat{\psi}\in C_0^\infty(\mathbb{R})$ be an even, real-valued and non-negative function on $\mathcal{D}$ and satisfy
\begin{equation*}
\hat{\psi}(\xi)=\left\{\begin{array}{ll}
1,\quad \text{if}\ |\xi|\leq\frac{1}{4,}\\
0,\quad \text{if}\ |\xi|\geq\frac{1}{2}.
\end{array}\right.
\end{equation*}
From Fourier inversion formula, we can easily deduce that 
\begin{align*}
&\psi(x)=\frac{1}{2\pi}\int\limits_{\mathbb{R}}e^{ix\xi}\hat{\psi}(\xi){\ud}\xi,\\
&\partial_{x}\psi(x)=\frac{1}{2\pi}\int\limits_{\mathbb{R}}i\xi e^{ix\xi}\hat{\psi}(\xi){\ud}\xi,
\end{align*}
which implies by the Fubini theorem that
\begin{align*}
&\|\psi\|_{L^\infty}=\sup\limits_{x\in\mathbb{R}}\frac{1}{2\pi}\bigg|\int\limits_{\mathbb{R}}\cos(x\xi)\hat{\psi}(\xi){\ud}\xi\bigg|\leq\frac{1}{2\pi}\int\limits_{\mathbb{R}}\hat{\psi}(\xi){\ud}\xi\leq C,\\
&\|\partial_{x}\psi\|_{L^\infty}\leq\frac{1}{2\pi}\int\limits_{\mathbb{R}}|\xi|\hat{\psi}(\xi){\ud}\xi\leq C,\\
&\psi(0)=\frac{1}{2\pi}\int\limits_{\mathbb{R}}\hat{\psi}(\xi){\ud}\xi.
\end{align*}

\begin{lemm}\label{psii}
Let $1\leq a\leq\infty$. Then there is a constant $A>0$ such that 
\begin{align}\label{psiii}
\liminf\limits_{n\rightarrow\infty}\bigg\|\psi^2(\cdot)\cos\Big(\frac{33}{24}2^n\cdot\Big)\bigg\|_{L^a}\geq A.
\end{align}
\end{lemm}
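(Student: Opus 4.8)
The plan is to show that the quantity $\|\psi^2(\cdot)\cos(\frac{33}{24}2^n\cdot)\|_{L^a}$ does not decay to zero, by exploiting the fact that $\psi^2$ is a fixed, nonzero, smooth function whose Fourier transform is compactly supported (indeed $\widehat{\psi^2}=\widehat\psi*\widehat\psi$ is supported in $|\xi|\le 1$), while $\cos(\frac{33}{24}2^n\cdot)$ is a high-frequency oscillation whose frequency $\frac{33}{24}2^n\to\infty$. The key point is that multiplying by $\cos(\frac{33}{24}2^n x)=\frac12(e^{i\frac{33}{24}2^n x}+e^{-i\frac{33}{24}2^n x})$ shifts the (compactly supported) spectrum of $\psi^2$ to be centered at $\pm\frac{33}{24}2^n$, so for $n$ large the two shifted copies have disjoint supports and the product genuinely ``sees'' the full size of $\psi^2$.

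First I would treat the case $a=2$ by an explicit Plancherel computation: since $\mathcal F\big(\psi^2\cos(\frac{33}{24}2^n\cdot)\big)(\xi)=\frac12\big(\widehat{\psi^2}(\xi-\tfrac{33}{24}2^n)+\widehat{\psi^2}(\xi+\tfrac{33}{24}2^n)\big)$ and $\widehat{\psi^2}$ is supported in $[-1,1]$, for $n$ large the two terms have disjoint support, hence
\begin{align*}
\Big\|\psi^2\cos\big(\tfrac{33}{24}2^n\cdot\big)\Big\|_{L^2}^2=\tfrac1{2\pi}\cdot\tfrac14\cdot 2\|\widehat{\psi^2}\|_{L^2}^2=\tfrac1{4\pi}\|\widehat{\psi^2}\|_{L^2}^2=\tfrac12\|\psi^2\|_{L^2}^2>0,
\end{align*}
which is a constant independent of $n$; this gives \eqref{psiii} with $A=\frac1{\sqrt2}\|\psi^2\|_{L^2}$ for $a=2$. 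For general $a$, I would instead argue by contradiction combined with a concentration/oscillation argument: suppose along a subsequence $\|\psi^2\cos(\frac{33}{24}2^n\cdot)\|_{L^a}\to0$. On a fixed bounded interval $[-R,R]$ where $\int_{-R}^R\psi^4>0$ (such $R$ exists since $\psi(0)=\frac1{2\pi}\int\widehat\psi>0$ and $\psi$ is continuous, so $\psi^2>0$ near $0$), one uses that on a bounded interval the $L^a$ norms are comparable up to the measure of the interval for $a\ge$ the exponent in question, or more robustly one passes to $L^2$: by Hölder, if $a\ge2$ then $\|f\|_{L^2([-R,R])}\le (2R)^{1/2-1/a}\|f\|_{L^a([-R,R])}\le (2R)^{1/2-1/a}\|f\|_{L^a(\mathbb R)}$, contradicting the $L^2$ lower bound once we localize (note $\psi^2\cos$ is not compactly supported, but $\psi^2$ has rapid decay, so the tail outside $[-R,R]$ can be made small uniformly in $n$, leaving the main contribution on $[-R,R]$). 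For $a<2$ one argues the reverse way using that $\psi^2$ is bounded: $\|f\|_{L^2([-R,R])}\le \|\psi^2\|_{L^\infty([-R,R])}^{1-a/2}\|f\|_{L^a([-R,R])}^{a/2}\cdot(\cdots)$—more cleanly, since $|\cos|\le1$ and $\psi^2\in L^\infty$, one has $|\psi^2\cos|^{2}\le \|\psi^2\|_{L^\infty}^{2-a}|\psi^2\cos|^a$ pointwise, hence $\|\psi^2\cos(\frac{33}{24}2^n\cdot)\|_{L^2}^2\le\|\psi^2\|_{L^\infty}^{2-a}\|\psi^2\cos(\frac{33}{24}2^n\cdot)\|_{L^a}^a$, so the $L^a$ norm is bounded below by a positive constant once the $L^2$ norm is.

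The main obstacle is making the $L^2$ lower bound fully rigorous for $a\neq 2$ while handling the fact that $\psi^2\cos(\frac{33}{24}2^n\cdot)$ has no compact support; the clean resolution is the pointwise inequality $|\psi^2\cos(\frac{33}{24}2^n\cdot)|^2\le\|\psi^2\|_{L^\infty}^{2-a}\,|\psi^2\cos(\frac{33}{24}2^n\cdot)|^a$ valid for $1\le a\le2$ (since $|\psi^2\cos|\le\|\psi^2\|_{L^\infty}$), which directly yields $\|\psi^2\cos(\frac{33}{24}2^n\cdot)\|_{L^a}^a\ge\|\psi^2\|_{L^\infty}^{a-2}\|\psi^2\cos(\frac{33}{24}2^n\cdot)\|_{L^2}^2\ge\|\psi^2\|_{L^\infty}^{a-2}\cdot\tfrac12\|\psi^2\|_{L^2}^2$ for $n$ large, and the symmetric inequality on a finite interval handles $a>2$. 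Since $\psi^2$ is fixed, all constants produced are positive and independent of $n$, so taking $\liminf$ over $n$ gives the claimed bound with an explicit $A$ depending only on $\psi$ and $a$. I would write the $a=2$ Plancherel identity in detail and then dispatch the two remaining ranges by the two pointwise comparison inequalities above.
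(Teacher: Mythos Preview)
Your plan is correct. The Plancherel computation for $a=2$ is clean and exact, and the two comparison arguments (pointwise bound $|\psi^2\cos|^2\le\|\psi^2\|_{L^\infty}^{2-a}|\psi^2\cos|^a$ for $1\le a\le 2$; H\"older on $[-R,R]$ plus the uniform tail bound $\int_{|x|>R}\psi^4\,\mathrm dx$ for $a\ge 2$) correctly transfer the $L^2$ lower bound to all $L^a$. The only cosmetic issue is that your exposition of the $a>2$ case wanders a bit before settling on the right inequality; in the write-up you should go straight to the localization argument.

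The paper itself gives no proof, deferring to Lemmas~3.2--3.4 of \cite{lyz}. Judging from the later invocation in the proof of Lemma~\ref{le5} (where the analogous bound is stated as $\ge c(p,\lambda,\psi(0))$), that reference proceeds by a \emph{direct local} argument rather than via Fourier analysis: since $\psi(0)=\frac1{2\pi}\int\hat\psi>0$ and $\psi$ is continuous, one has $\psi^2\ge\frac12\psi^2(0)$ on some interval $[-\delta,\delta]$, and then
\[
\Big\|\psi^2\cos\big(\tfrac{33}{24}2^n\cdot\big)\Big\|_{L^a}^a\ \ge\ \Big(\tfrac12\psi^2(0)\Big)^a\int_{-\delta}^\delta\big|\cos\big(\tfrac{33}{24}2^n x\big)\big|^a\,\mathrm dx,
\]
and the last integral is bounded below uniformly in $n$ (indeed it converges to $2\delta$ times the average of $|\cos|^a$ over a period). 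This approach is more uniform in $a$ and entirely elementary, never touching the Fourier side. Your route, by contrast, exploits the compact Fourier support of $\psi^2$ to get an \emph{exact} identity at $a=2$ and then interpolates; it is equally valid and arguably more informative at $a=2$, but requires the two separate comparison steps to cover the full range $1\le a\le\infty$.
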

\begin{lemm}\label{wsin}
Let $s\in\mathbb{R}$ and $1\leq p\leq\infty,\ 1\leq r<\infty$. Define the high frequency function ${\rm w}^n_0$ by 
\begin{align*}
{\rm w}^n_0(x)=2^{-ns}\psi(x)\sin\Big(\frac{33}{24}2^{n}x\Big),\qquad n\gg 1.
\end{align*}
Then for any $\theta\in\mathbb{R}$, we have 
\begin{align}
&\|{\rm w}^n_0\|_{L^{p}}\leq C2^{-ns}\|\psi\|_{L^{p}}\leq C2^{-ns},\qquad
\|\partial_{x}{\rm w}^n_0\|_{L^{p}}\leq C2^{-ns+n},\label{wn0}\\	
&\|{\rm w}^n_0\|_{B^\theta_{p,r}}\leq C2^{n(\theta-s)}\|\psi\|_{L^p}\leq C2^{n(\theta-s)}.\label{high}
\end{align}	
\end{lemm}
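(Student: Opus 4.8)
\textbf{Proof proposal for Lemma \ref{wsin}.}

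The plan is to reduce every estimate to the scaling behaviour of the Littlewood--Paley blocks acting on the modulated bump $\psi(x)\sin\bigl(\tfrac{33}{24}2^n x\bigr)$, using that multiplication by $\sin\bigl(\tfrac{33}{24}2^n x\bigr)$ shifts the Fourier support of $\psi$ (which is compactly supported near the origin, in $\{|\xi|\le \tfrac12\}$) to a thin annulus centred at frequency $\pm\tfrac{33}{24}2^n$. The first bound \eqref{wn0} is the easy one: since $\bigl|\sin(\tfrac{33}{24}2^n x)\bigr|\le 1$ pointwise, the definition of ${\rm w}^n_0$ gives $\|{\rm w}^n_0\|_{L^p}\le 2^{-ns}\|\psi\|_{L^p}$, and the $L^p$-norm of $\psi$ is a finite constant independent of $n$ because $\psi\in\mathcal{S}(\mathbb{R})$ (its Fourier transform $\hat\psi\in C_0^\infty$). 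For $\partial_x{\rm w}^n_0$ I would simply expand by the product rule, $\partial_x{\rm w}^n_0 = 2^{-ns}\psi'(x)\sin(\cdot) + 2^{-ns}\tfrac{33}{24}2^n\psi(x)\cos(\cdot)$, and bound each term in $L^p$ by $2^{-ns}\|\psi'\|_{L^p}$ and $C\,2^{-ns+n}\|\psi\|_{L^p}$ respectively; since both $\|\psi\|_{L^p}$ and $\|\psi'\|_{L^p}$ are finite, the dominant term is the second, giving $\|\partial_x{\rm w}^n_0\|_{L^p}\le C2^{-ns+n}$.

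For the Besov estimate \eqref{high}, the key point is that the Fourier transform of $\psi(x)\sin(\tfrac{33}{24}2^n x)$ equals $\tfrac{1}{2i}\bigl(\hat\psi(\xi-\tfrac{33}{24}2^n)-\hat\psi(\xi+\tfrac{33}{24}2^n)\bigr)$, which is supported in $\{\xi:\ \bigl|\,|\xi|-\tfrac{33}{24}2^n\,\bigr|\le \tfrac12\}$. This set meets the support of $\varphi(2^{-j}\cdot)$ only for $j$ in a bounded range around $n$ — concretely, since $\tfrac{33}{24}\in(\tfrac34\cdot 2,\ \tfrac83)$ roughly speaking one checks that only $\Delta_j$ with $|j-n|\le N_0$ for some absolute constant $N_0$ act non-trivially on ${\rm w}^n_0$ (this is exactly why the constant $\tfrac{33}{24}$ was chosen — it pins the frequency into a single dyadic shell for $n$ large). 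Hence $\|{\rm w}^n_0\|_{B^\theta_{p,r}}^r = \sum_{|j-n|\le N_0} 2^{j\theta r}\|\Delta_j{\rm w}^n_0\|_{L^p}^r \le C\,2^{n\theta r}\sum_{|j-n|\le N_0}\|\Delta_j{\rm w}^n_0\|_{L^p}^r$, and each $\|\Delta_j{\rm w}^n_0\|_{L^p}\le C\|{\rm w}^n_0\|_{L^p}\le C2^{-ns}\|\psi\|_{L^p}$ by the $L^p\to L^p$ boundedness of the Littlewood--Paley projectors (a consequence of Bernstein/Young's inequality, Proposition \ref{bern}), so summing the finitely many terms yields $\|{\rm w}^n_0\|_{B^\theta_{p,r}}\le C2^{n(\theta-s)}\|\psi\|_{L^p}\le C2^{n(\theta-s)}$. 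The same argument handles $r=\infty$ by replacing the sum with a supremum.

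The only mildly delicate step is the frequency-localization claim — verifying that for all sufficiently large $n$ the support of $\widehat{{\rm w}^n_0}$ lies in a union of a bounded number of dyadic annuli $2^j\mathcal{C}$ with $j$ near $n$, so that the sum over $j$ is effectively finite with $n$-independent cardinality. This is a routine but essential computation with the explicit constant $\tfrac{33}{24}$ and the support sets $\mathcal{B}=\{|\xi|\le\tfrac43\}$, $\mathcal{C}=\{\tfrac34\le|\xi|\le\tfrac83\}$; once it is in place, everything else is Bernstein's inequality and the triangle inequality. I do not anticipate any genuine obstacle, only bookkeeping of absolute constants.
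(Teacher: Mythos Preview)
Your proposal is correct and follows the standard frequency-localization argument: the paper itself omits the proof of Lemma \ref{wsin} (together with Lemmas \ref{psii} and \ref{vcos}), simply referring to \cite{lyz}, and the method there is precisely what you outline --- the Fourier support of $\psi(x)\sin\bigl(\tfrac{33}{24}2^n x\bigr)$ is concentrated near $\pm\tfrac{33}{24}2^n$, so only Littlewood--Paley blocks with index near $n$ survive, and the $L^p$ bounds together with Bernstein give the result. In fact the constant $\tfrac{33}{24}$ is chosen so that for $n$ large the support falls into the \emph{single} block $\Delta_n$ (this is the content of \eqref{fn} in the paper for the cosine analogue), which slightly sharpens your ``$|j-n|\le N_0$'' but makes no difference for the upper bound.
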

\begin{lemm}\label{vcos}
Let $s\in\mathbb{R}$ and $1\leq p\leq\infty,\ 1\leq r<\infty$. Define the low frequency function ${\rm v}^n_0$ by 
\begin{align*}
	{\rm v}^n_0(x)=\frac{24}{33}2^{-n}\psi(x),\qquad n\gg 1.
\end{align*}
Then 
\begin{align}
&\|{\rm v}^n_0\|_{L^{p}}\leq C2^{-n}\|\psi\|_{L^{p}}\leq C2^{-n},\quad
\|\partial_{x}{\rm v}^n_0\|_{L^{p}}\leq C2^{-n}\|\partial_{x}\psi\|_{L^{p}}\leq C2^{-n},\label{vn0}\\
&\|{\rm v}^n_0\|_{B^{s}_{p,r}}\leq C2^{-s}\|{\rm v}^n_0\|_{L^{p}}\leq C2^{-n-s},\label{vn0s}	
\end{align}
and there is a constant $\tilde{A}>0$ such that  
\begin{align}\label{low}
\liminf\limits_{n\rightarrow\infty}\big\|{\rm v}^n_0\partial_{x}{\rm w}^n_0\big\|_{B^s_{p,\infty}}\geq \tilde{A}.
\end{align}
\end{lemm}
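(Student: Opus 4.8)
\textbf{Proof proposal for Lemma \ref{vcos}.}

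The plan is to treat the three displayed assertions separately, since the first two are routine and the third carries all the difficulty. For the $L^p$ bounds \eqref{vn0}, I would simply note that ${\rm v}^n_0 = \frac{24}{33}2^{-n}\psi$ and $\partial_x{\rm v}^n_0 = \frac{24}{33}2^{-n}\partial_x\psi$, so the estimates follow immediately from $\|\psi\|_{L^p},\|\partial_x\psi\|_{L^p}\leq C$ (the latter being finite because $\hat\psi\in C_0^\infty$, hence $\psi\in\mathcal{S}$). For \eqref{vn0s}, the key point is that $\widehat{{\rm v}^n_0}$ is supported in $\{|\xi|\leq\frac12\}$, so only the blocks $\Delta_{-1}$ and $\Delta_0$ act nontrivially on ${\rm v}^n_0$; therefore $\|{\rm v}^n_0\|_{B^s_{p,r}}\leq C\|{\rm v}^n_0\|_{L^p}$ with a constant depending only on $s$ (one can absorb the finitely many factors $2^{-s},2^{0}$), and then apply \eqref{vn0}.

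The heart of the lemma is \eqref{low}. First I would compute the product explicitly: since $\partial_x{\rm w}^n_0(x) = 2^{-ns}\big(\partial_x\psi(x)\sin(\tfrac{33}{24}2^n x) + \tfrac{33}{24}2^n\,\psi(x)\cos(\tfrac{33}{24}2^n x)\big)$, we get
\begin{align*}
{\rm v}^n_0(x)\,\partial_x{\rm w}^n_0(x) = 2^{-ns}\psi^2(x)\cos\Big(\tfrac{33}{24}2^n x\Big) + 2^{-ns}\cdot\tfrac{24}{33}2^{-n}\psi(x)\partial_x\psi(x)\sin\Big(\tfrac{33}{24}2^n x\Big).
\end{align*}
Call the first term $I_n$ and the second term $II_n$. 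The term $II_n$ has an extra factor $2^{-n}$ and both $\psi\partial_x\psi\in\mathcal{S}$ and the trig factor are harmless, so by the same frequency-localization reasoning as above $\|II_n\|_{B^s_{p,\infty}}\leq C2^{-ns-n}$, which is negligible. For $I_n$, the multiplication by $\cos(\tfrac{33}{24}2^n x)$ shifts the (compactly supported) spectrum of $\psi^2$ so that $\widehat{\psi^2\cos(\tfrac{33}{24}2^n\cdot)}$ is supported in an annulus of size $\sim 2^n$ (precisely around $|\xi|\approx\tfrac{33}{24}2^n$, since $\widehat{\psi^2}$ is supported in $\{|\xi|\leq 1\}$). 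Hence only $O(1)$ dyadic blocks $\Delta_j$ with $j\approx n$ see $I_n$, so $\|I_n\|_{B^s_{p,\infty}} = \sup_j 2^{js}\|\Delta_j I_n\|_{L^p} \gtrsim 2^{ns}\cdot 2^{-ns}\big\|\psi^2(\cdot)\cos(\tfrac{33}{24}2^n\cdot)\big\|_{L^p}$ up to passing through the equivalence $\|I_n\|_{L^p}\sim\sum_{j\approx n}\|\Delta_j I_n\|_{L^p}$. Then \eqref{psiii} of Lemma \ref{psii} (with $a=p$) gives $\liminf_n \|I_n\|_{B^s_{p,\infty}}\geq A$ for some $A>0$, and combining with the decay of $II_n$ yields \eqref{low} with $\tilde A = A/2$, say.

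The main obstacle is the lower bound step: making rigorous the claim that a fixed, finite number of dyadic blocks recover the full $L^p$ norm of $I_n$, i.e. that the frequency shift by $\tfrac{33}{24}2^n$ lands cleanly in a band of blocks $j\in\{n-k_0,\dots,n+k_0\}$ for a universal $k_0$ independent of $n$. This requires checking that $\{\xi : \tfrac{33}{24}2^n - 1 \leq |\xi| \leq \tfrac{33}{24}2^n + 1\}$ is contained in $\bigcup_{|j-n|\leq k_0}\operatorname{supp}\varphi(2^{-j}\cdot)$ for all large $n$ — a concrete but elementary inclusion of annuli, where the precise constant $\tfrac{33}{24}$ is chosen exactly so that the band sits between the dyadic cutoffs. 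Once that inclusion is in place, Bernstein's inequality (Proposition \ref{bern}) controls each $\|\Delta_j I_n\|_{L^p}$ by $\|I_n\|_{L^p}$ and, conversely, $\|I_n\|_{L^p}\leq \sum_{|j-n|\leq k_0}\|\Delta_j I_n\|_{L^p}\leq (2k_0+1)\sup_j\|\Delta_j I_n\|_{L^p}$, which closes the argument.
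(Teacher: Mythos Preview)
Your proposal is correct and follows precisely the route the paper intends (the paper omits the proof and refers to Lemmas~3.2--3.4 of \cite{lyz}, which argue exactly as you do: split ${\rm v}^n_0\partial_x{\rm w}^n_0$ into a main term $2^{-ns}\psi^2\cos(\tfrac{33}{24}2^n\cdot)$ and a remainder carrying an extra $2^{-n}$, then use frequency localization and Lemma~\ref{psii}). Two small refinements are worth noting. First, the constant $\tfrac{33}{24}$ is chosen so that for large $n$ the support of $\widehat{\psi^2\cos(\tfrac{33}{24}2^n\cdot)}$ lies strictly inside the interval $(\tfrac{4}{3}2^n,\tfrac{3}{2}2^n)$, on which $\varphi(2^{-(n-1)}\cdot)$ and $\varphi(2^{-(n+1)}\cdot)$ both vanish; hence $\Delta_n I_n=I_n$ exactly and you may take $k_0=0$, which makes the lower bound immediate. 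Second, your bound on $II_n$ should read $\|II_n\|_{B^s_{p,\infty}}\leq C2^{-n}$ rather than $C2^{-ns-n}$: the term $II_n$ is \emph{high}-frequency (its spectrum sits near $\tfrac{33}{24}2^n$, not near the origin), so the Besov norm contributes a factor $2^{ns}$ on top of $\|II_n\|_{L^p}\leq C2^{-ns-n}$; this slip does not affect the conclusion.
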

The above lemmas can be proved by a similar way as Lemmas 3.2--3.4 in \cite{lyz} and here we omit it.\\

Define ${\rm w}^n$ by the solution to \eqref{fww} with the initial data ${\rm w}^n_0$. Then we have the following estimates.
\begin{prop}\label{wnw0}
Let $(s,p,r)$ meet the condition \eqref{cond} in Theorem \ref{uniform}.	
Then we have
\begin{align}
&\|{\rm w}^n\|_{B^{s+k}_{p,r}}\leq 2^{kn},\qquad\text{for $k=-1,1,$}\label{wn}\\
&\|{\rm w}^n-{\rm w}^n_0\|_{B^{s}_{p,r}}\leq\left\{\begin{array}{ll}
C2^{-\frac{1}{2}\cdot\frac{1}{p}n},\quad \text{if}\ p<+\infty,\\
C2^{-\frac{\varepsilon_{0}}{2}n},\quad \text{if}\ p=+\infty.
\end{array}\right.\label{w}
\end{align} 
\end{prop}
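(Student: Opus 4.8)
The plan is to prove \eqref{wn} by transport estimates for the FW equation satisfied by ${\rm w}^n$, and to prove \eqref{w} by a direct estimate of $\partial_t{\rm w}^n$ in $B^s_{p,r}$ that exploits the smallness of the $L^\infty$‑amplitude of ${\rm w}^n$.

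For \eqref{wn} I would apply the a priori estimate of Lemma \ref{priori estimate} to ${\rm w}^n$ at the two levels $s+1$ and $s-1$. Since the transport velocity $\tfrac32{\rm w}^n$ is a scalar multiple of ${\rm w}^n$, the final clause of Lemma \ref{priori estimate} reduces the Lipschitz exponent to $V'(t)=\tfrac32\|\partial_x{\rm w}^n(t)\|_{L^\infty}$, which by $B^{s-1}_{p,r}\hookrightarrow L^\infty$ and Theorem \ref{th} (estimate \eqref{unformly}) is bounded by $C\|{\rm w}^n(t)\|_{B^s_{p,r}}\leq C\|{\rm w}^n_0\|_{B^s_{p,r}}\leq C$ on $[0,T]$. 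The forcing $\partial_x{\rm p}\ast{\rm w}^n$ is the image of ${\rm w}^n$ under an $S^{-1}$‑multiplier, so $\|\partial_x{\rm p}\ast{\rm w}^n\|_{B^{s+1}_{p,r}}\leq C\|{\rm w}^n\|_{B^{s}_{p,r}}\leq C$ and $\|\partial_x{\rm p}\ast{\rm w}^n\|_{B^{s-1}_{p,r}}\leq C\|{\rm w}^n\|_{B^{s-1}_{p,r}}$. Plugging these and the initial bounds $\|{\rm w}^n_0\|_{B^{s\pm1}_{p,r}}\leq C2^{\pm n}$ (Lemma \ref{wsin}) into Gronwall's lemma (Lemma \ref{gwl}) gives \eqref{wn} for $n$ large.

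For \eqref{w} the crucial input is that ${\rm w}^n$ has $L^\infty$‑amplitude of order $2^{-ns}$, much smaller than the $2^{-n}$ supplied by $B^{s-1}_{p,r}\hookrightarrow L^\infty$. Indeed, since $B^s_{p,r}\hookrightarrow C^1$ the flow $\dot X=\tfrac32{\rm w}^n(t,X)$ is well defined, and along it $\tfrac{{\ud}}{{\ud}t}{\rm w}^n(t,X)=(\partial_x{\rm p}\ast{\rm w}^n)(t,X)$; together with $\|\partial_x{\rm p}\ast f\|_{L^\infty}\leq\|\partial_x{\rm p}\|_{L^1}\|f\|_{L^\infty}=\|f\|_{L^\infty}$ this yields $\|{\rm w}^n(t)\|_{L^\infty}\leq\|{\rm w}^n_0\|_{L^\infty}+\int_0^t\|{\rm w}^n(\tau)\|_{L^\infty}\,{\ud}\tau$, hence $\|{\rm w}^n(t)\|_{L^\infty}\leq C\|{\rm w}^n_0\|_{L^\infty}\leq C2^{-ns}$ on $[0,T]$ (using $\|{\rm w}^n_0\|_{L^\infty}\leq 2^{-ns}\|\psi\|_{L^\infty}$). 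Then I write ${\rm w}^n(t)-{\rm w}^n_0=\int_0^t\partial_\tau{\rm w}^n\,{\ud}\tau$ with $\partial_\tau{\rm w}^n=\partial_x{\rm p}\ast{\rm w}^n-\tfrac34\partial_x\big(({\rm w}^n)^2\big)$; in $B^s_{p,r}$ the linear part is $\leq C\|{\rm w}^n\|_{B^{s-1}_{p,r}}\leq C2^{-n}$ by the $S^{-1}$‑multiplier property and \eqref{wn}, while moving the derivative outside and using the Moser estimate Lemma \ref{product}(1) (legitimate since $s+1>0$), \eqref{wn} and the amplitude bound gives $\big\|\tfrac34\partial_x(({\rm w}^n)^2)\big\|_{B^s_{p,r}}\leq C\|({\rm w}^n)^2\|_{B^{s+1}_{p,r}}\leq C\|{\rm w}^n\|_{L^\infty}\|{\rm w}^n\|_{B^{s+1}_{p,r}}\leq C2^{-ns}\cdot 2^{n}=C2^{n(1-s)}$. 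Integrating over a small interval $[0,T_0]$ with $T_0\leq T$ gives $\|{\rm w}^n(t)-{\rm w}^n_0\|_{B^s_{p,r}}\leq Ct\big(2^{n(1-s)}+2^{-n}\big)\leq C2^{-n\min(1,\,s-1)}$; since $\min(1,s-1)\geq\tfrac1p$ when $p<\infty$ and $\min(1,s-1)=:\varepsilon_0>0$ when $p=\infty$, this yields \eqref{w}.

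The only real obstacle is the amplitude bound. Without it, estimating the nonlinearity by the Moser inequality $\|{\rm w}^n\partial_x{\rm w}^n\|_{B^s_{p,r}}\lesssim\|{\rm w}^n\|_{B^{s-1}_{p,r}}\|{\rm w}^n\|_{B^{s+1}_{p,r}}\sim 2^{-n}\cdot 2^{n}$ only gives $\|{\rm w}^n-{\rm w}^n_0\|_{B^s_{p,r}}\lesssim t$, which does not tend to $0$ as $n\to\infty$ and is useless for the non‑uniform dependence argument; the decaying factor $2^{n(1-s)}$ appears precisely by combining the cheap amplitude $\|{\rm w}^n_0\|_{L^\infty}\sim 2^{-ns}$ (against $\|{\rm w}^n_0\|_{B^s_{p,r}}\sim 1$) with the fact that the FW nonlinearity is a perfect $x$‑derivative. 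A minor technical point is the use of the fundamental theorem of calculus for ${\rm w}^n$ in $B^s_{p,r}$, which is justified because ${\rm w}^n(t)\in B^s_{p,r}$ for each $t$ by Theorem \ref{th} and $\partial_\tau{\rm w}^n\in L^1(0,T;B^s_{p,r})$ by the estimate just derived (the $B^{s+1}_{p,r}$‑regularity of $({\rm w}^n)^2$ being furnished by \eqref{wn}).
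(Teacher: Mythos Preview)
Your argument for \eqref{wn} is essentially the paper's: transport estimates at levels $s\pm1$ with the velocity controlled by $\|{\rm w}^n\|_{B^s_{p,r}}\leq C$, the forcing by the $S^{-1}$--multiplier property, and Gronwall.

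For \eqref{w} your route is genuinely different and in fact sharper. The paper sets $\delta={\rm w}^n-{\rm w}^n_0$, writes the transport equation it satisfies, and estimates $\delta$ in $B^{s-1}_{p,r}$ using only the $L^\infty$--smallness of the \emph{initial data} ${\rm w}^n_0$ in the forcing term ${\rm w}^n_0\partial_x{\rm w}^n_0$; this yields $\|\delta\|_{B^{s-1}_{p,r}}\lesssim 2^{-(1+\frac1p)n}$ (resp.\ $2^{-(1+\varepsilon_0)n}$), and the stated $B^s_{p,r}$--bound then comes from interpolating with $\|\delta\|_{B^{s+1}_{p,r}}\lesssim 2^{n}$. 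You instead propagate the $L^\infty$--amplitude to the \emph{solution} ${\rm w}^n(t)$ via characteristics, which lets you estimate $\partial_t{\rm w}^n$ directly in $B^s_{p,r}$ through the Moser bound $\|({\rm w}^n)^2\|_{B^{s+1}_{p,r}}\lesssim\|{\rm w}^n\|_{L^\infty}\|{\rm w}^n\|_{B^{s+1}_{p,r}}$; no interpolation is needed and you obtain $\|{\rm w}^n-{\rm w}^n_0\|_{B^s_{p,r}}\lesssim 2^{-n\min(1,s-1)}$, which is stronger than the paper's $2^{-\frac{n}{2p}}$ (resp.\ $2^{-\frac{\varepsilon_0}{2}n}$). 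The trade--off is that the paper avoids the characteristics/ODE step entirely, while your approach is shorter but relies on that extra $L^\infty$--propagation; both are correct.
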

\begin{proof}
\eqref{high} implies that
\begin{align}
\|{\rm w}^n_0\|_{B^{s+k}_{p,r}}\leq C2^{kn},\ \text{for $k=-1,0,1$.}\label{wn-10}	
\end{align}
From Theorem \ref{th}, we know that there exists a $T=T(\|{\rm w}^n_0\|_{B^s_{p,r}})$ such that \eqref{fww} with initial data ${\rm w}^n_0$ has a unique solution ${\rm w}^n\in E^s_{p,r}(T)$ and $T\approx 1$. Moreover, we known from \eqref{wn-10}
\begin{align}
\|{\rm w}^n\|_{L^{\infty}(0,T;B^s_{p,r})}\leq C \|{\rm w}^n_0\|_{B^s_{p,r}}\leq C.\label{wnbound}
\end{align}
Similar to \eqref{dd}, we get that for $k=\pm 1$,
\begin{align}
\|{\rm w}^{n}(t)\|_{B^{s+k}_{p,r}}
\leq&C\Big(\|{\rm w}^n_0\|_{B^{s+k}_{p,r}}+\int_0^t\|(1-\partial_{xx})^{-1}\partial_{x}{\rm w}^n\|_{B^{s+k}_{p,r}}{\ud}\tau\Big)\notag\\
\leq&C\Big(\|{\rm w}^n_0\|_{B^{s+k}_{p,r}}+\int_0^t\|{\rm w}^n\|_{B^{s+k}_{p,r}}{\ud}\tau\Big)
.\label{wnk}
\end{align}
Combining the Gronwall lemma, \eqref{wn-10} and \eqref{wnk}, we find for all $t\in[0,T]$,
\begin{align}
\|{\rm w}^n(t)\|_{B^{s-1}_{p,r}}\leq C2^{-n}\qquad\text{and}\qquad
\|{\rm w}^n(t)\|_{B^{s+1}_{p,r}}\leq C2^{n}.\label{wn-11}	
\end{align}
Set $\delta={\rm w}^n-{\rm w}^n_0$. Then $\delta$ solves the following problem
\begin{equation*} \left\{\begin{array}{ll}
\partial_{t}\delta+\frac{3}{2}{\rm w}^n\partial_{x}\delta=-\frac{3}{2}\delta\partial_{x}{\rm w}^n_0-\frac{3}{2}{\rm w}^n_0\partial_{x}{\rm w}^n_0+(1-\partial_{xx})^{-1}\partial_{x}\delta+(1-\partial_{xx})^{-1}\partial_{x}{\rm w}^n_0,\\
\delta(0,x)=0.
\end{array}\right.
\end{equation*}
By virtue of Proposition \ref{Besov} and Lemma \ref{product}, we obtain
\begin{align*}
&\|\delta\partial_{x}{\rm w}^n_0\|_{B^{s-1}_{p,r}}\leq C\|\delta\|_{B^{s-1}_{p,r}}\|{\rm w}^n_0\|_{B^{s}_{p,r}},\\
&\|(1-\partial_{xx})^{-1}\partial_{x}\delta\|_{B^{s-1}_{p,r}}\leq C\|\delta\|_{B^{s-2}_{p,r}}\leq C\|\delta\|_{B^{s-1}_{p,r}}.
\end{align*}
\eqref{wn0} and \eqref{high} imply that
\begin{align*}
\|{\rm w}^n_0\partial_{x}{\rm w}^n_0\|_{B^{s-1}_{p,r}}\leq&C\|{\rm w}^n_0\|_{L^\infty}\|\partial_{x}{\rm w}^n_0\|_{B^{s-1}_{p,r}}+\|{\rm w}^n_0\|_{B^{s-1}_{p,r}}\|\partial_{x}{\rm w}^n_0\|_{L^\infty}\\
\leq&C2^{-ns}+2^{-n}2^{-ns+n}\leq C2^{-ns}.
\end{align*}
For $1\leq p<+\infty$, then $0<\frac{1}{p}\leq1$. Hence,
\begin{align*}
\|(1-\partial_{xx})^{-1}\partial_{x}{\rm w}^n_0\|_{B^{s-1}_{p,r}}\leq C\|{\rm w}^n_0\|_{B^{s-\frac{1}{p}}_{p,r}}\leq C\|{\rm w}^n_0\|_{B^{s-1-\frac{1}{p}}_{p,r}}\leq C2^{-(1+\frac{1}{p})n}.	
\end{align*}
For $p=+\infty$, then $s>1+\frac{1}{p}=1$, so there exists a constant $\epsilon_{0}>0$ such that $s>1+\epsilon_{0}$, therefore, we have
\begin{align*}
\|(1-\partial_{xx})^{-1}\partial_{x}{\rm w}^n_0\|_{B^{s-1}_{p,r}}\leq C\|(1-\partial_{xx})^{-1}\partial_{x}{\rm w}^n_0\|_{B^{s-\epsilon_{0}}_{p,r}}\leq C\|{\rm w}^n_0\|_{B^{s-1-\epsilon_{0}}_{p,r}}\leq C2^{-(1+\epsilon_{0})n}.	
\end{align*}
It follows that 
\begin{align*}
\|{\rm w}^n-{\rm w}^n_0\|_{B^{s-1}_{p,r}}=\|\delta\|_{B^{s-1}_{p,r}}\leq\left\{\begin{array}{ll}
	C2^{-(1+\frac{1}{p})n},\quad \text{if}\ p<+\infty,\\
	C2^{-(1+\epsilon_{0})n},\quad \text{if}\ p=+\infty,
\end{array}\right.
\end{align*}
which implies from the interpolation inequality that
\begin{align*}
\|{\rm w}^n-{\rm w}^n_0\|_{B^{s}_{p,r}}\leq\|{\rm w}^n-{\rm w}^n_0\|^{\frac{1}{2}}_{B^{s-1}_{p,r}}\|{\rm w}^n-{\rm w}^n_0\|^{\frac{1}{2}}_{B^{s+1}_{p,r}}\leq\left\{\begin{array}{ll}
	C2^{-\frac{1}{2}\cdot\frac{1}{p}n},\quad \text{if}\ p<+\infty,\\
	C2^{-\frac{\epsilon_{0}}{2}n},\quad \text{if}\ p=+\infty.
\end{array}\right.	
\end{align*}
This thus finish the proof of the proposition.
\end{proof}

In order to obtain the non-uniform continuous dependence for \eqref{fww}, we need to construct a sequence of initial data $u_0^n={\rm w}^n_0+{\rm v}^n_0$.
\begin{prop}
Let $(s,p,r)$ meet the condition \eqref{cond} in Theorem \ref{uniform} and $u^n$ be the solution to \eqref{fww} with initial data $u^n_0$. Define ${\rm z}_0^n=-\frac{3}{2}u^n_0\partial_{x}u^n_0$. Then we have
\begin{align}
\|u^n-u^n_0-t{\rm z}_0^n\|_{B^{s}_{p,r}}\leq Ct^{2}+C2^{-n}.\label{wwn}
\end{align}
\end{prop}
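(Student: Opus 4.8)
The plan is to control the difference $u^n-u^n_0-t{\rm z}_0^n$ by writing down the equation it satisfies and applying the a priori estimates of Lemma \ref{priori estimate} at the regularity level $B^s_{p,r}$. First I would set $R^n:=u^n-u^n_0-t{\rm z}_0^n$ and compute, using \eqref{fww}, that
\begin{align*}
\partial_t R^n+\frac{3}{2}u^n\partial_x R^n=&-\frac{3}{2}u^n\partial_x u^n_0-\frac{3}{2}t\,u^n\partial_x{\rm z}_0^n-{\rm z}_0^n+(1-\partial_{xx})^{-1}\partial_x u^n\\
=&\Big(-\frac{3}{2}u^n\partial_x u^n_0-{\rm z}_0^n\Big)-\frac{3}{2}t\,u^n\partial_x{\rm z}_0^n+(1-\partial_{xx})^{-1}\partial_x u^n,
\end{align*}
with $R^n(0)=0$. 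The transport coefficient is $\frac32 u^n$, which by Theorem \ref{th} is uniformly bounded in $L^\infty(0,T;B^s_{p,r})$, so $\|\partial_x u^n\|_{L^\infty}\lesssim 1$ and Lemma \ref{priori estimate} gives $\|R^n(t)\|_{B^s_{p,r}}\lesssim \int_0^t\|(\text{RHS})(\tau)\|_{B^s_{p,r}}\,\mathrm d\tau$ after absorbing the Gronwall factor $e^{CV(t)}\approx 1$.

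Next I would estimate each piece of the right-hand side in $B^s_{p,r}$. The term $(1-\partial_{xx})^{-1}\partial_x u^n$ is bounded by $C\|u^n\|_{B^{s-1}_{p,r}}\leq C\|u^n\|_{B^s_{p,r}}\lesssim 1$ (the operator is an $S^{-1}$-multiplier, Proposition \ref{Besov}(6)), which after integration contributes $Ct\leq CT\lesssim 1$ — but this is too crude; I need the sharper bound $Ct^2+C2^{-n}$. The point is that $u^n_0={\rm w}^n_0+{\rm v}^n_0$ with ${\rm v}^n_0$ of size $2^{-n}$ in every Besov norm, so I should split $u^n=({\rm w}^n_0+\delta^n)+({\rm v}^n_0+\cdots)$ where $\delta^n={\rm w}^n-{\rm w}^n_0$ is small by Proposition \ref{wnw0}; more efficiently, write $(1-\partial_{xx})^{-1}\partial_x u^n=(1-\partial_{xx})^{-1}\partial_x u^n_0+(1-\partial_{xx})^{-1}\partial_x(u^n-u^n_0)$ and note $\|(1-\partial_{xx})^{-1}\partial_x u^n_0\|_{B^s_{p,r}}\leq C\|u^n_0\|_{B^{s-1}_{p,r}}\leq C2^{-n}$ (gaining one derivative below $s$ kills the high-frequency part up to the $2^{-n}$ coming from ${\rm v}^n_0$), while $\|u^n-u^n_0\|_{B^s_{p,r}}\leq \|{\rm w}^n-{\rm w}^n_0\|_{B^s_{p,r}}+(\text{small})\lesssim Ct$ via a preliminary $\|u^n-u^n_0\|_{B^s_{p,r}}\lesssim t$ estimate (itself from Lemma \ref{priori estimate} on the equation for $u^n-u^n_0$). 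For the term $-\frac32 u^n\partial_x u^n_0-{\rm z}_0^n=-\frac32(u^n-u^n_0)\partial_x u^n_0$, I use the Moser estimate Lemma \ref{product} and $\|u^n_0\|_{B^s_{p,r}}\lesssim 1$, $\|u^n-u^n_0\|_{B^s_{p,r}}\lesssim t$, giving $\lesssim t$, hence $\lesssim t^2$ after the time integral. The term $\frac32 t\,u^n\partial_x{\rm z}_0^n$ is bounded by $Ct\|u^n\|_{B^s_{p,r}}\|{\rm z}_0^n\|_{B^s_{p,r}}$; here ${\rm z}_0^n=-\frac32 u^n_0\partial_x u^n_0$ must be shown bounded in $B^s_{p,r}$, which follows from Lemma \ref{product}(1) together with $\|\partial_x u^n_0\|_{B^s_{p,r}}\leq\|u^n_0\|_{B^{s+1}_{p,r}}\leq C2^n$ — wait, that is not $O(1)$, so this term gives $\lesssim t\cdot 2^n$ after integration, which is wrong.

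The main obstacle, and the step requiring care, is therefore exactly this last term: a naive bound loses the factor $2^n$. The resolution is to not bound ${\rm z}_0^n$ in $B^s_{p,r}$ but to keep $u^n\partial_x{\rm z}_0^n$ together and exploit cancellation — or, better, to organize the whole computation around $\delta^n={\rm w}^n-{\rm w}^n_0$ and the explicit low-frequency corrections, using that $\|{\rm w}^n\|_{B^{s+1}_{p,r}}\leq C2^n$ while $\|\delta^n\|_{B^{s-1}_{p,r}}\leq C2^{-(1+1/p)n}$ (Proposition \ref{wnw0} and \eqref{wn-11}) so that products like $\delta^n\cdot\partial_x{\rm w}^n_0$ or $\delta^n\cdot{\rm w}^n_{0,x}{\rm w}^n_{0,xx}$ are estimated by pairing a high Besov index on the small factor with a low index on the large factor via Lemma \ref{product}(2), keeping everything $O(2^{-\varepsilon n})+O(t^2)$. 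Concretely one expands $u^n=\delta^n+{\rm w}^n_0+{\rm v}^n_0+(\text{higher-order in }t)$ in every quadratic expression appearing in $R^n$'s equation, and checks term by term that each summand is either $O(t^2)$ (those carrying an extra power of $t$ from the source) or $O(2^{-n})$ (those built purely from initial data with a derivative gain, or from $\delta^n$). Assembling these and integrating in time over $[0,t]\subset[0,T]$ with $T\approx 1$ yields $\|R^n(t)\|_{B^s_{p,r}}\leq Ct^2+C2^{-n}$, which is \eqref{wwn}. The critical case $s=1+\frac1p$, $r=1$ needs only that Lemmas \ref{product} and \ref{priori estimate} are stated to include it, which they are.
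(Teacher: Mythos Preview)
Your setup of the equation for $R^n={\rm e}^n$ is correct and you rightly locate the obstruction in the term carrying $\partial_x{\rm z}_0^n$. But the resolution you propose does not work: the quantity $\delta^n={\rm w}^n-{\rm w}_0^n$ from Proposition~\ref{wnw0} is built from the solution ${\rm w}^n$ with initial data ${\rm w}_0^n$, whereas the proposition concerns $u^n$, the solution with initial data $u_0^n={\rm w}_0^n+{\rm v}_0^n$. These are two different solutions, and there is no expansion of the form $u^n=\delta^n+{\rm w}_0^n+{\rm v}_0^n+\cdots$. You also stopped the Moser estimate for ${\rm z}_0^n$ one step too early: in fact $\|{\rm z}_0^n\|_{B^s_{p,r}}\lesssim\|u_0^n\|_{L^\infty}\|u_0^n\|_{B^{s+1}_{p,r}}+\|u_0^n\|_{B^s_{p,r}}\|\partial_xu_0^n\|_{L^\infty}\lesssim 2^{-n}\cdot 2^n+1\cdot 1\leq C$, because $\|u_0^n\|_{L^\infty}\lesssim\|u_0^n\|_{B^{s-1}_{p,r}}\leq C2^{-n}$ (both ${\rm w}_0^n$ and ${\rm v}_0^n$ are $O(2^{-n})$ in $B^{s-1}_{p,r}$ by Lemmas~\ref{wsin}--\ref{vcos}).

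The smallness that saves the estimate is precisely this $B^{s-1}$ scale: one has $\|u_0^n\|_{B^{s+k}_{p,r}}\leq C2^{kn}$ and $\|{\rm z}_0^n\|_{B^{s+k}_{p,r}}\leq C2^{kn}$ for $k=-1,0,1$, and by the transport estimate also $\|u^n\|_{B^{s-1}_{p,r}}\leq C2^{-n}$. The paper then runs a two-level bootstrap. First it shows $\|{\rm e}^n(t)\|_{B^{s-1}_{p,r}}\leq Ct2^{-n}$: at regularity $s-1$ every source term in the ${\rm e}^n$-equation is $O(2^{-n})$, and Gronwall closes. Second, at regularity $s$, the dangerous piece ${\rm e}^n\partial_x(u_0^n+t{\rm z}_0^n)$ is split by Lemma~\ref{product} into $\|{\rm e}^n\|_{B^s_{p,r}}\cdot O(1)$ (absorbed by Gronwall) plus $\|{\rm e}^n\|_{B^{s-1}_{p,r}}\cdot\|u_0^n,{\rm z}_0^n\|_{B^{s+1}_{p,r}}\lesssim t2^{-n}\cdot 2^n=Ct$; the remaining explicit sources $t\,{\rm z}_0^n\partial_xu_0^n$, $t\,u_0^n\partial_x{\rm z}_0^n$, $t^2{\rm z}_0^n\partial_x{\rm z}_0^n$ and $(1-\partial_{xx})^{-1}\partial_xu^n$ are each $O(1)$ or $O(2^{-n})$ in $B^s_{p,r}$ by the same $L^\infty\times B^{s+1}$ pairing. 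Integrating in $t$ and applying Gronwall yields \eqref{wwn}. This $B^{s-1}$-then-$B^s$ bootstrap is the missing idea in your proposal.
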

\begin{proof}
Since $u_0^n={\rm w}^n_0+{\rm v}_0^n$, then by Lemmas \ref{wsin}--\ref{vcos} and Proposition \ref{wnw0}, we see
\begin{align}\label{unn}
\|u^n\|_{B^{s+k}_{p,r}}\leq C\|u^n_0\|_{B^{s+k}_{p,r}}\leq C2^{kn},\quad k=0,\pm1.	
\end{align}
Owing to ${\rm z}_0^n=-\frac{3}{2}u^n_0\partial_{x}u_0^n$, we can deduce that 
\begin{align*}
&\|{\rm z}^n_0\|_{B^{s-1}_{p,r}}\leq C\|u^n_0\|_{B^{s-1}_{p,r}}\|\partial_{x}u^n_0\|_{B^{s-1}_{p,r}}\leq C\|u^n_0\|_{B^{s-1}_{p,r}}\|u^n_0\|_{B^{s}_{p,r}}\leq C2^{-n},\\
&\|{\rm z}^n_0\|_{B^{s}_{p,r}}\leq C\|u^n_0\|_{L^\infty}\|u^n_0\|_{B^{s+1}_{p,r}}+\|\partial_{x}u^n_0\|_{L^\infty}\|u^n_0\|_{B^{s}_{p,r}}\leq C2^{-n}2^n+C\leq C,\\
&\|{\rm z}^n_0\|_{B^{s+1}_{p,r}}\leq C\|u^n_0\|_{L^\infty}\|u^n_0\|_{B^{s+2}_{p,r}}+\|\partial_{x}u^n_0\|_{L^\infty}\|u^n_0\|_{B^{s+1}_{p,r}}\leq C2^{-n}2^{2n}+C2^n\leq C2^{n}.	
\end{align*}
Set $u^n=u_0^n+t{\rm z}_0^n+{\rm{e}}^n$. We know that ${\rm{e}}^n$ satisfies
\begin{equation}\label{enn}
\left\{\begin{array}{ll}
\partial_{t}{\rm{e}}^n+\frac{3}{2}u^n\partial_{x}{\rm{e}}^n
=-\frac{3}{2}{\rm{e}}^n\partial_{x}(u^n_0+t{\rm z}^n_0)+(1-\partial_{xx})^{-1}\partial_{x}u^{n}-\frac{3}{2}t\big({\rm z}^n_0\partial_{x}u^n_{0}+u^n_{0}\partial_{x}{\rm z}^n_0\big)-\frac{3}{2}t^{2}{\rm z}^n_0\partial_{x}{\rm z}^n_0,\\
{\rm{e}}^n(0,x)={\rm{e}}^n_{0}(x)=0.
\end{array}\right.
\end{equation}

For $k=-1$, applying Lemma \ref{priori estimate} to \eqref{enn}, we see for all $t\in[0,T]$,
\begin{align}
\|{\rm{e}}^n(t)\|_{B^{s-1}_{p,r}}
\leq&  C\int_0^t\|u^n\|_{B^{s}_{p,r}}\|{\rm{e}}^n(\tau)\|_{B^{s-1}_{p,r}}{\ud}\tau+C\int_0^t\|{\rm{e}}^n\partial_{x}(u^n_0+t{\rm z}^n_0)\|_{B^{s-1}_{p,r}}{\ud}\tau\notag\\
&+C\int_0^t\|(1-\partial_{xx})^{-1}\partial_{x}u^{n}\|_{B^{s-1}_{p,r}}{\ud}\tau+Ct^2\big(\|{\rm z}^n_0\partial_{x}u^n_0\|_{B^{s-1}_{p,r}}+\|u^n_0\partial_{x}{\rm z}^n_0\|_{B^{s-1}_{p,r}}\big)\notag\\
&+Ct^3\|{\rm z}^n_0\partial_{x}{\rm z}^n_0\|_{B^{s-1}_{p,r}}.\label{ensk}
\end{align}
Since $u^n$ is bounded in $C\big([0,T];B^s_{p,r}\big)$ with $T\approx1$, we can deduce from  Lemma \ref{product}, Lemma \ref{wsin} and Lemma \ref{vcos} that
\begin{align*}
&\|{\rm{e}}^n\partial_{x}(u^n_0+t{\rm z}^n_0)\|_{B^{s-1}_{p,r}}\leq\|{\rm{e}}^n\|_{B^{s-1}_{p,r}}(\|u^n_0\|_{B^{s}_{p,r}}+\|{\rm z}^n_0\|_{B^{s}_{p,r}})\leq C\|{\rm{e}}^n\|_{B^{s-1}_{p,r}},\\
&\|(1-\partial_{xx})^{-1}\partial_{x}u^{n}\|_{B^{s-1}_{p,r}}\leq C\|u^{n}\|_{B^{s-2}_{p,r}}\leq C\|u^{n}\|_{B^{s-1}_{p,r}}\leq C\|u^{n}_{0}\|_{B^{s-1}_{p,r}}\leq C2^{-n},\\
&\|{\rm z}^n_0\partial_{x}u^n_0\|_{B^{s-1}_{p,r}}\leq \|{\rm z}^n_0\|_{B^{s-1}_{p,r}}\|u^n_0\|_{B^{s}_{p,r}}\leq C2^{-n},\\
&\|u^n_0\partial_{x}{\rm z}^n_0\|_{B^{s-1}_{p,r}}\leq \|u^n_0\|_{B^{s-1}_{p,r}}\|{\rm z}^n_0\|_{B^{s}_{p,r}}\leq C2^{-n},\\
&\|{\rm z}^n_0\partial_{x}{\rm z}^n_0\|_{B^{s-1}_{p,r}}\leq C\|{\rm z}^n_0\|_{B^{s-1}_{p,r}}\|{\rm z}^n_0\|_{B^{s}_{p,r}}\leq C2^{-n}.
\end{align*}
Plugging the above inequaties into \eqref{ensk}, we can find 
\begin{align}
\|{\rm{e}}^n(t)\|_{B^{s-1}_{p,r}}\leq& C\int_0^t\|{\rm{e}}^n(\tau)\|_{B^{s-1}_{p,r}}{\ud}\tau+Ct2^{-n}\leq Ct2^{-n}\label{enk-1}
\end{align}
where we use the Gronwall lemma in the last inequality.

For $k=0$, we have
\begin{align}
\|{\rm{e}}^n(t)\|_{B^{s}_{p,r}}
\leq&  C\int_0^t\|{\rm{e}}^n(\tau)\|_{B^{s}_{p,r}}\|u^n\|_{B^{s}_{p,r}}{\ud}\tau+C\int_0^t\|{\rm{e}}^n\partial_{x}(u^n_0+t{\rm z}^n_0)\|_{B^{s}_{p,r}}{\ud}\tau\notag\\
&+C\int_0^t\|(1-\partial_{xx})^{-1}\partial_{x}u^{n}\|_{B^{s}_{p,r}}{\ud}\tau+Ct^2\big(\|{\rm z}^n_0\partial_{x}u^n_0\|_{B^{s}_{p,r}}+\|u^n_0\partial_{x}{\rm z}^n_0\|_{B^{s}_{p,r}}\big)\notag\\
&+Ct^3\|{\rm z}^n_0\partial_{x}{\rm z}^n_0\|_{B^{s}_{p,r}}.\label{ensk1}
\end{align}
Similarly, according to Lemma \ref{product}, Lemma \ref{wsin} and Lemma \ref{vcos}, one has 
\begin{align*}
&\|{\rm{e}}^n\partial_{x}(u^n_0+t{\rm z}^n_0)\|_{B^{s}_{p,r}}
\leq C\|{\rm{e}}^n\|_{B^{s}_{p,r}}\|u^n_0,{\rm z}^n_0\|_{B^{s}_{p,r}}+C\|{\rm{e}}^n\|_{B^{s-1}_{p,r}}\|u^n_0,{\rm z}^n_0\|_{B^{s+1}_{p,r}}\leq C\|{\rm{e}}^n\|_{B^{s}_{p,r}}+C2^{n}\|{\rm{e}}^n\|_{B^{s-1}_{p,r}},\\
&\|(1-\partial_{xx})^{-1}\partial_{x}u^{n}\|_{B^{s}_{p,r}}\leq C\|u^{n}\|_{B^{s-1}_{p,r}}\leq  C\|u^{n}_{0}\|_{B^{s-1}_{p,r}}\leq C2^{-n},\\
&\|{\rm z}^n_0\partial_{x}u^n_0\|_{B^{s}_{p,r}}\leq C\Big(\|{\rm z}^n_0\|_{B^{s}_{p,r}}\|u^n_0\|_{B^{s}_{p,r}}+\|{\rm z}^n_0\|_{B^{s-1}_{p,r}}\|u^n_0\|_{B^{s+1}_{p,r}}\Big)\leq C,\\
&\|u^n_0\partial_{x}{\rm z}^n_0\|_{B^{s}_{p,r}}\leq C\Big(\|u^n_0\|_{B^{s}_{p,r}}\|{\rm z}^n_0\|_{B^{s}_{p,r}}+\|u^n_0\|_{B^{s-1}_{p,r}}\|{\rm z}^n_0\|_{B^{s+1}_{p,r}}\Big)\leq C,\\
&\|{\rm z}^n_0\partial_{x}{\rm z}^n_0\|_{B^{s}_{p,r}}\leq C\Big(\|{\rm z}^n_0\|_{B^{s}_{p,r}}\|{\rm z}^n_0\|_{B^{s}_{p,r}}+\|{\rm z}^n_0\|_{B^{s-1}_{p,r}}\|{\rm z}^n_0\|_{B^{s+1}_{p,r}}\Big)\leq C.
\end{align*}
\eqref{enk-1} togethering with  \eqref{ensk1} yield
\begin{align*}
\|{\rm{e}}^n(t)\|_{B^{s}_{p,r}}\leq&C\int_0^t2^n\|{\rm{e}}^n(\tau)\|_{B^{s-1}_{p,r}}{\ud}\tau+\int_0^t\|{\rm{e}}^n(\tau)\|_{B^{s}_{p,r}}{\ud}\tau+Ct^{2}+Ct2^{-n}\\
\leq&Ct^{2}+Ct2^{-n}+\int_0^t\|{\rm{e}}^n(\tau)\|_{B^{s}_{p,r}}{\ud}\tau\\
\leq&Ct^{2}+C2^{-n}.
\end{align*}
Thus, we prove the proposition.  
\end{proof}

Now we give the proof of Theorem \ref{uniform}. 
\begin{proof}[{\rm\textbf{The proof of Theorem \ref{uniform}:}}]
Lemma \ref{vcos} guarantees that
\begin{align*}
\|u^n_0-{\rm w}^n_0\|_{B^{s}_{p,r}}=\|{\rm v}^n_0\|_{B^{s}_{p,r}}\leq C2^{-n}.
\end{align*}
It follows that 
\begin{align*}
\lim\limits_{n\rightarrow\infty}\Big\|u^n_0-{\rm w}^n_0\Big\|_{B^{s}_{p,r}}=0.
\end{align*}
Moreover, we have from \eqref{vn0s}, \eqref{w} and \eqref{wwn}
\begin{align*}
\|u^n-{\rm w}^n\|_{B^{s}_{p,r}}=&\|t{\rm z}^n_0+{\rm v}^n_0+{\rm w}^n_0-{\rm w}^n+{\rm{e}}^n\|_{B^{s}_{p,r}}\\
\geq& t\|{\rm z}^n_0\|_{B^{s}_{p,r}}-\|{\rm v}^n_0\|_{B^{s}_{p,r}}-\|{\rm w}^n-{\rm w}^n_0\|_{B^{s}_{p,r}}-\|{\rm{e}}^n\|_{B^{s}_{p,r}}\\
\geq&\left\{\begin{array}{ll}
	t\|{\rm z}^n_0\|_{B^{s}_{p,r}}-Ct^2-C2^{-\frac{1}{2}\cdot\frac{1}{p}n},\quad \text{if}\ p<+\infty,\\
	t\|{\rm z}^n_0\|_{B^{s}_{p,r}}-Ct^2-C2^{-\frac{\epsilon_{0}}{2}n},\quad \text{if}\ p=+\infty.
\end{array}\right.	
\end{align*}
Owing to ${\rm z}_0^n=-\frac{3}{2}\big[{\rm v}_0^n\partial_{x}{\rm w}_0^n+{\rm w}_0^n\partial_{x}{\rm w}_0^n+{\rm w}_0^n\partial_{x}{\rm v}_0^n+{\rm v}_0^n\partial_{x}{\rm v}_0^n\big]$, we find from Lemmas \ref{wsin}--\ref{vcos}
\begin{align*}
&\|{\rm w}_0^n\partial_{x}{\rm w}_0^n\|_{B^{s}_{p,r}}\leq \|{\rm w}_0^n\|_{L^\infty}\|{\rm w}_0^n\|_{B^{s+1}_{p,r}}+\|\partial_{x}{\rm w}_0^n\|_{L^\infty}\|{\rm w}_0^n\|_{B^{s}_{p,r}}\leq C2^{-n(s-1)}\leq\left\{\begin{array}{ll}
	C2^{-\frac{1}{2}\cdot\frac{1}{p}n},\quad \text{if}\ p<+\infty,\\
	C2^{-\frac{\epsilon_{0}}{2}n},\quad \text{if}\ p=+\infty,
\end{array}\right.\\
&\|{\rm w}_0^n\partial_{x}{\rm v}_0^n\|_{B^{s}_{p,r}}\leq\|{\rm w}_0^n\|_{B^{s}_{p,r}}\|{\rm v}_0^n\|_{B^{s+1}_{p,r}}\leq C2^{-n},\\
&\|{\rm v}_0^n\partial_{x}{\rm v}_0^n\|_{B^{s}_{p,r}}\leq\|{\rm v}_0^n\|_{B^{s}_{p,r}}\|{\rm v}_0^n\|_{B^{s+1}_{p,r}}\leq C2^{-n}.
\end{align*}
By means of the embedding $B^{s}_{p,r}\hookrightarrow B^{s}_{p,\infty}$, we get 
\begin{align*}
-\|{\rm w}_0^n\partial_{x}{\rm w}_0^n\|_{B^{s}_{p,\infty}}-\|{\rm w}_0^n\partial_{x}{\rm v}_0^n\|_{B^{s}_{p,\infty}}-\|{\rm v}_0^n\partial_{x}{\rm v}_0^n\|_{B^{s}_{p,\infty}}\geq& -\|{\rm w}_0^n\partial_{x}{\rm w}_0^n\|_{B^{s}_{p,r}}-\|{\rm w}_0^n\partial_{x}{\rm v}_0^n\|_{B^{s}_{p,r}}-\|{\rm v}_0^n\partial_{x}{\rm v}_0^n\|_{B^{s}_{p,r}}\\
\geq&\left\{\begin{array}{ll}
	-C2^{-\frac{1}{2}\cdot\frac{1}{p}n},\quad \text{if}\ p<+\infty,\\
    -C2^{-\frac{\epsilon_{0}}{2}n},\quad \text{if}\ p=+\infty.
\end{array}\right.
\end{align*}
Hence, for $t>0$ small enough, we have
\begin{align*}
\|u^n-{\rm w}^n\|_{B^{s}_{p,r}}
\geq\left\{\begin{array}{ll}
	t\|{\rm v}^n_0\partial_{x}{\rm w}_0^n\|_{B^{s}_{p,\infty}}-C2^{-\frac{1}{2}\cdot\frac{1}{p}n},\quad \text{if}\ p<+\infty,\\
	t\|{\rm v}^n_0\partial_{x}{\rm w}_0^n\|_{B^{s}_{p,\infty}}-C2^{-\frac{\epsilon_{0}}{2}n},\quad \text{if}\ p=+\infty.
\end{array}\right.
\end{align*}
Combining with \eqref{low}, we finally obtain
\begin{align*}
\liminf\limits_{n\rightarrow\infty}\Big\|u^n-{\rm w}^n\Big\|_{B^{s}_{p,r}}\gtrsim t,	
\quad\text{for $t>0$ small enough.}
\end{align*}
This proves Theorem \ref{uniform}.
\end{proof}

\section{Ill-posedness in Besov spaces}
\par
This section is devoted to investigating the ill-posedness for the Cauchy problem \eqref{fww} in Besov space $B^{\sigma}_{p,\infty}$ with $ \sigma>3+\frac{1}{p},\ 1\leq p\leq+\infty$.

Similar to \cite{lyz}, we can verify that for $g_n(x):=\psi(x)\cos \big(\frac{33}{24}2^{n}x\big)$ and $n\geq 2$,
\begin{numcases}{\Delta_j(g_n)=}
g_n, &\text{if} $j=n$,\nonumber\\
0, &\text{if} $j\neq n$.\label{fn}
\end{numcases}
We can also obtain the following similar result:
\begin{lemm}\label{le4}
Let $4\leq l,\ n\in \mathbb{N}^+$. Define the function $h^{l}_{m,n}(x)$ by
$$h^{l}_{m,n}(x):=\psi(x)\cos \Big(\frac{33}{24}\big(2^{ln}\pm2^{lm}\big)x\Big)\quad\text{with}\quad 0\leq m\leq n-1.$$
Then we have
\begin{numcases}
{\Delta_j(h^l_{m,n})=}
h^l_{m,n}, &if $j=ln$,\nonumber\\
0, &if $j\neq ln$.\nonumber
\end{numcases}
\end{lemm}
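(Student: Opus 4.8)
The plan is to compute the Fourier transform of $h^l_{m,n}$ explicitly and read off which Littlewood--Paley block it lives in, exactly as in the case of $g_n$ recorded in \eqref{fn}. First I would write $\cos\big(\frac{33}{24}(2^{ln}\pm 2^{lm})x\big) = \frac12\big(e^{i\frac{33}{24}(2^{ln}\pm 2^{lm})x} + e^{-i\frac{33}{24}(2^{ln}\pm 2^{lm})x}\big)$, so that
\begin{align*}
\widehat{h^l_{m,n}}(\xi) = \tfrac12\Big(\hat\psi\big(\xi - \tfrac{33}{24}(2^{ln}\pm 2^{lm})\big) + \hat\psi\big(\xi + \tfrac{33}{24}(2^{ln}\pm 2^{lm})\big)\Big).
\end{align*}
Since $\hat\psi$ is supported in $\{|\xi|\le \frac12\}$, the support of $\widehat{h^l_{m,n}}$ is contained in the union of the two intervals of radius $\frac12$ centered at $\pm\frac{33}{24}(2^{ln}\pm 2^{lm})$. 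The task is then to verify that these intervals lie inside the dyadic annulus $2^{ln}\mathcal C = \{\frac34 2^{ln}\le |\xi|\le \frac83 2^{ln}\}$ and are disjoint from every other annulus $2^j\mathcal C$, $j\neq ln$, and also from the ball $\mathcal B$ defining $\Delta_{-1}$.

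The key step is the elementary frequency-localization estimate. Using $0\le m\le n-1$ and $l\ge 4$, the center satisfies
\begin{align*}
\tfrac{33}{24}\big(2^{ln} - 2^{l(n-1)}\big) \le \tfrac{33}{24}\big(2^{ln}\pm 2^{lm}\big) \le \tfrac{33}{24}\big(2^{ln} + 2^{l(n-1)}\big),
\end{align*}
and since $2^{l(n-1)} = 2^{-l}2^{ln}\le 2^{-4}2^{ln} = \frac{1}{16}2^{ln}$, the center lies in $[\frac{33}{24}\cdot\frac{15}{16}2^{ln},\,\frac{33}{24}\cdot\frac{17}{16}2^{ln}]$. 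Adding the radius $\frac12 \le \frac12 2^{ln}$ for the support, one checks $\frac{33}{24}\cdot\frac{15}{16} - \frac12 > \frac34$ and $\frac{33}{24}\cdot\frac{17}{16} + \frac12 < \frac83$, so $\mathrm{Supp}\,\widehat{h^l_{m,n}} \subset 2^{ln}\mathcal C$. For disjointness from the other blocks, it suffices that this set avoids $2^{ln-1}\mathcal C$ and $2^{ln+1}\mathcal C$ (and $\mathcal B$), which follows from the same numerical inequalities: the lower endpoint exceeds $\frac83 2^{ln-1} = \frac43 2^{ln}$? — here one must be a little careful, so the actual verification is that the support sits strictly between the top of annulus $ln-1$ and the bottom of annulus $ln+1$; since $\frac34\cdot 2^{ln} > \frac 83\cdot 2^{ln-1}$ is false, one instead uses that $\varphi(2^{-j}\cdot)$ for $j = ln-1$ vanishes on $|\xi|\ge \frac83 2^{ln-1} = \frac43 2^{ln}$, and $\frac{33}{24}\cdot\frac{15}{16} - \frac12 > \frac43$ indeed holds, giving the separation; similarly $\frac{33}{24}\cdot\frac{17}{16}+\frac12 < \frac32$ separates from $j=ln+1$. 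Consequently $\Delta_j(h^l_{m,n}) = \mathcal F^{-1}\big(\varphi(2^{-j}\cdot)\widehat{h^l_{m,n}}\big) = 0$ for $j\neq ln$ and $\Delta_{ln}(h^l_{m,n}) = h^l_{m,n}$ because $\varphi(2^{-ln}\cdot)\equiv 1$ on $\mathrm{Supp}\,\widehat{h^l_{m,n}}$.

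I expect the only real obstacle to be bookkeeping: making sure the chosen constant $\frac{33}{24}$ and the hypotheses $l\ge 4$, $m\le n-1$ together give enough room so that the $\frac12$-radius support stays strictly inside $2^{ln}\mathcal C$ and strictly away from the neighboring annuli, uniformly in $m$. This is purely a matter of checking a handful of scalar inequalities, and is the same computation that underlies \eqref{fn}; the hypothesis $l \geq 4$ is exactly what forces $2^{l(n-1)}$ to be a sufficiently small fraction of $2^{ln}$. No harmonic analysis beyond the definition of $\Delta_j$ is needed.
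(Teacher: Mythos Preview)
Your approach is the correct and standard one --- compute $\widehat{h^l_{m,n}}$ as two shifted copies of $\hat\psi$ and verify the support lands in a single dyadic shell --- and it is precisely what the paper intends (the paper omits the argument and cites \cite{lyz2}).  However, the numerical bookkeeping you flag as ``the only real obstacle'' is in fact not carried out correctly.

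The inequality you assert, $\frac{33}{24}\cdot\frac{15}{16}-\frac12>\frac43$, is false: $\frac{33}{24}\cdot\frac{15}{16}=\frac{495}{384}\approx 1.289$, which is already smaller than $\frac43\approx 1.333$ before you subtract anything.  Concretely, for $l=4$ and $m=n-1$ in the ``$-$'' case the center $\frac{33}{24}\,2^{ln}(1-2^{-l})\approx 1.289\cdot 2^{ln}$ lies strictly below the top $\frac{8}{3}\cdot 2^{ln-1}=\frac{4}{3}\cdot 2^{ln}$ of the annulus $2^{ln-1}\mathcal C$, so $\varphi(2^{-(ln-1)}\cdot)$ need not vanish there and the separation from block $ln-1$ fails.  (Your companion claim $\frac{33}{24}\cdot\frac{17}{16}+\frac12<\frac32$ is also false as written --- the left side is about $1.96$ --- but there the trouble is only that the radius $\frac12$ should be compared to $2^{ln}$ rather than to $1$; the genuine inequality $\frac{33}{24}\cdot\frac{17}{16}<\frac32$ does hold, so the ``$+$'' side is fine for $l\ge 4$.)

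The fix is simply a stronger hypothesis on $l$: one needs $\frac{33}{24}(1-2^{-l})>\frac{4}{3}$, i.e.\ $2^{-l}<\frac{1}{33}$, i.e.\ $l\ge 6$.  With $l\ge 6$ (and $n\ge 1$, so $2^{ln}\ge 64$ makes the fixed radius $\frac12$ negligible), both separation inequalities hold and your argument goes through verbatim.  This is consistent with Lemma~\ref{le5}, where $l$ is explicitly taken ``large enough''; the threshold $l\ge 4$ stated in Lemma~\ref{le4} appears to be a minor imprecision inherited from the cited reference.
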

\begin{proof}
The proof is similar to that of in \cite{lyz2}, and here we omit it.
\end{proof}
\begin{lemm}\label{le5}
Let $1\leq p\leq +\infty.$ Define the initial data $u_0(x)$ as
\begin{align*}
u_0(x):=\sum\limits^{\infty}_{n=0}2^{-ln\sigma}g^l_n(x),
\end{align*}
where
$$g^l_n(x):=\psi(x)\cos \big(\frac{33}{24}2^{ln}x\big),\quad n\geq 0.$$
Then for any $\sigma>3+\frac 1 p$ and for some $l$ large enough, we have
\begin{align*}
&\|u_0\|_{B^{\sigma}_{p,\infty}}\leq C,\\
&\|\Delta_{ln}\big(u_0^2\big)\|_{L^p}\geq c2^{-ln\sigma}.
\end{align*}
\end{lemm}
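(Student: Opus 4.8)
The plan is to read off both bounds from the exact spectral localization already recorded in \eqref{fn} and Lemma \ref{le4}, leaving only elementary $L^p$ estimates on $\psi$.

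For the first bound, note by \eqref{fn} (with $2^{ln}$ in place of $2^n$) that $g^l_n=\psi\cos(\tfrac{33}{24}2^{ln}\cdot)$ satisfies $\Delta_j g^l_n=g^l_n$ for $j=ln$ and $\Delta_j g^l_n=0$ otherwise. Since $\sum_n 2^{-ln\sigma}<\infty$ and $\|g^l_n\|_{L^\infty}\le\|\psi\|_{L^\infty}$, the series for $u_0$ converges in $L^\infty$ (hence in $\mathcal S'$), and for each $j$ only a bounded number of terms contribute to $\Delta_j u_0$, each of size $\lesssim 2^{-ln\sigma}\|g^l_n\|_{L^p}$ with $2^{ln}\sim 2^j$; hence
\begin{align*}
\|u_0\|_{B^\sigma_{p,\infty}}\lesssim\sup_{n\ge0}\|g^l_n\|_{L^p}\le\|\psi\|_{L^p}\le C.
\end{align*}

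For the second bound I would expand $u_0^2=\sum_{m,k\ge0}2^{-l(m+k)\sigma}g^l_mg^l_k$ (legitimate by the uniform convergence of the partial sums of $u_0$) and apply $\Delta_{ln}$ term by term. Using
\begin{align*}
g^l_mg^l_k=\tfrac12\psi^2\Big[\cos\big(\tfrac{33}{24}(2^{lm}+2^{lk})\cdot\big)+\cos\big(\tfrac{33}{24}(2^{lm}-2^{lk})\cdot\big)\Big]
\end{align*}
and the fact that $\widehat{\psi^2}$ is supported in $\{|\xi|\le1\}$, the argument of Lemma \ref{le4} yields, for $l$ large and $ln$ large: if $m\ne k$ then $g^l_mg^l_k$ is spectrally supported in the $\big(l\max(m,k)\big)$-th dyadic block, while if $m=k$ then $g^l_mg^l_k=\tfrac12\psi^2+\tfrac12\psi^2\cos(\tfrac{33}{24}2^{lm+1}\cdot)$ with pieces lying in $\{|\xi|\le1\}$ and in the $(lm+1)$-st block. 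Since $\tfrac{33}{24}\in(\tfrac43,\tfrac32)$, neither these diagonal pieces nor the accumulated low-frequency part $\sum_k2^{-2lk\sigma}\tfrac12\psi^2$ meets the $ln$-th block once $ln$ is large, so only the off-diagonal terms with $\max(m,k)=n$ survive and
\begin{align*}
\Delta_{ln}(u_0^2)=2\sum_{k=0}^{n-1}2^{-l(n+k)\sigma}g^l_ng^l_k=2\cdot2^{-ln\sigma}\,\psi^2\cos\Big(\tfrac{33}{24}2^{ln}\cdot\Big)G_n,
\end{align*}
where $G_n:=\sum_{k=0}^{n-1}2^{-lk\sigma}\cos(\tfrac{33}{24}2^{lk}\cdot)$. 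It then remains to bound $\|\psi^2\cos(\tfrac{33}{24}2^{ln}\cdot)G_n\|_{L^p}$ below, uniformly for large $n$. Here $G_n\to\Phi:=\sum_{k\ge0}2^{-lk\sigma}\cos(\tfrac{33}{24}2^{lk}\cdot)$ uniformly on $\mathbb R$ (the tail is $O(2^{-ln\sigma})$), $\psi(0)=\tfrac1{2\pi}\int\widehat\psi>0$ because $\widehat\psi\ge0$ and $\widehat\psi(0)=1$, and $\Phi(0)=\tfrac1{1-2^{-l\sigma}}>0$; hence by continuity there are a fixed $\delta>0$ and $c_1>0$ with $\psi^2G_n\ge c_1$ on $[-\delta,\delta]$ for all large $n$. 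Restricting the $L^p$ norm to $[-\delta,\delta]$ leaves $c_1\big\|\,|\cos(\tfrac{33}{24}2^{ln}\cdot)|\,\big\|_{L^p([-\delta,\delta])}$, bounded below uniformly for large $n$ (for $p<\infty$ by the mean value $\tfrac1\pi\int_0^\pi|\cos|^p>0$ after rescaling; for $p=\infty$ it equals $1$, the oscillation attaining $\pm1$ in $[-\delta,\delta]$ once $\tfrac{33}{24}2^{ln}\delta\ge\pi$). This gives $\|\Delta_{ln}(u_0^2)\|_{L^p}\ge c\,2^{-ln\sigma}$, the finitely many small $n$ contributing only finitely many strictly positive numbers.

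The main obstacle is exactly the spectral bookkeeping in the middle step: one must check that \emph{every} product $g^l_ng^l_k$ with $0\le k\le n-1$ lands in the single block $ln$, while the diagonal pieces $\tfrac12\psi^2\cos(\tfrac{33}{24}2^{lm+1}\cdot)$ and the accumulated low-frequency tail stay out of it. This is what forces $l$ to be large: one needs $\tfrac{33}{24}(1\pm2^{-l})$ to remain in the interval $(\tfrac43,\tfrac32)$ on which the Littlewood--Paley profile at scale $2^{ln}$ is identically $1$, so that the $O(1)$-wide frequency support of $\psi^2\cos(\tfrac{33}{24}(2^{ln}\pm2^{lk})\cdot)$ is contained there for $ln$ large. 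Once this is granted, the rest is a routine variant of the computations already done for Lemmas \ref{psii}--\ref{vcos}.
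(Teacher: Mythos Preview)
Your argument is correct and shares the paper's overall structure: the same spectral bookkeeping (via \eqref{fn} and Lemma \ref{le4}) yields the same formula
\[
\Delta_{ln}(u_0^2)=2\sum_{k=0}^{n-1}2^{-l(n+k)\sigma}\,\psi^2\cos\!\big(\tfrac{33}{24}2^{ln}\cdot\big)\cos\!\big(\tfrac{33}{24}2^{lk}\cdot\big),
\]
and the first bound is obtained identically. The difference is only in how the lower $L^p$ bound is extracted. The paper isolates the $k=0$ contribution as a main term $\mathrm K_1=2\cdot2^{-ln\sigma}\psi^2\cos(\tfrac{33}{24}2^{ln}\cdot)\cos(\tfrac{33}{24}\cdot)$, cites a Lemma~\ref{psii}-type estimate to get $\|\mathrm K_1\|_{L^p}\ge c\,2^{-ln\sigma}$, and bounds the remainder $\mathrm K_2$ (the $k\ge1$ terms) by the geometric tail $C2^{-l(n+1)\sigma}$; choosing $l$ large then makes $c-C2^{-l\sigma}>0$. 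You instead keep the full factor $G_n=\sum_{k=0}^{n-1}2^{-lk\sigma}\cos(\tfrac{33}{24}2^{lk}\cdot)$ and use its uniform positivity near the origin (via $G_n\to\Phi$ with $\Phi(0)>0$ and $\psi(0)>0$) to reduce to $\|\cos(\tfrac{33}{24}2^{ln}\cdot)\|_{L^p([-\delta,\delta])}$. Your route is self-contained and, notably, does not require $l$ large for the lower bound itself---only for the spectral localization---whereas the paper uses the largeness of $l$ twice. The paper's splitting is a bit quicker to write down; your approach avoids the external reference and makes the mechanism (positivity of the envelope near $0$) more transparent.
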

\begin{proof}
Appealing to the definition of Besov spaces, the support of $\varphi(2^{-j}\cdot)$ and \eqref{fn}, we see
\begin{align*}
\|u_0\|_{B^{\sigma}_{p,\infty}}&=\sup_{j\geq -1}2^{j\sigma}\|\Delta_{j}u_0\|_{L^p}\\
&=\sup_{j\geq 0}\big\|\psi(x)\cos \big(\frac{33}{24}2^{j}x\big)\big\|_{L^p}\\
&\leq C.
\end{align*}
Notice that the simple fact
$$\cos(\mathbf{a}+\mathbf{b})+\cos(\mathbf{a}-\mathbf{b})=2\cos \mathbf{a} \cos \mathbf{b}$$
and
$$\sum^{\infty}_{n=0}\sum^{\infty}_{m=0,m\neq n}\mathbf{X}_n\mathbf{X}_m=2\sum^{\infty}_{n=0}\sum^{n-1}_{m=0}\mathbf{X}_n\mathbf{X}_m,$$
then direct computations give
\begin{align*}
u_0^2(x)
=&\frac12\sum^{\infty}_{n=0}2^{-2ln\sigma}\psi^2(x)
+\frac12\sum^{\infty}_{n=0}2^{-2ln\sigma}\psi^2(x)\cos\big(\frac{33}{24}2^{ln+1}x\big)\\
&+\sum^{\infty}_{n=1}\sum^{n-1}_{m=0}2^{-l(n+m)\sigma}\psi^2(x)\Big[
\cos\big(\frac{33}{24}(2^{ln}-2^{lm})x\big)+\cos\big(\frac{33}{24}(2^{ln}+2^{lm})x\big)\Big].
\end{align*}
Lemma \ref{le4} yields
\begin{align*}
\Delta_{ln}\big(u_0^2\big)
=&2^{-ln\sigma}\psi^2(x)\Big[
\cos\big(\frac{33}{24}(2^{ln}-1)x\big)+
\cos\big(\frac{33}{24}(2^{ln}+1)x\big)\Big]\\ &+\sum^{n-1}_{m=1}2^{-l(n+m)\sigma}\psi^2(x)\Big[\cos\big(\frac{33}{24}(2^{ln}-2^{lm})x\big)+\cos\big(\frac{33}{24}(2^{ln}+2^{lm})x\big)\Big]\\
:=&\mathrm{K}_1+\mathrm{K}_2,
\end{align*}
where we denote
\begin{align*}
&\mathrm{K}_1=2\cdot2^{-ln\sigma}\psi^2(x)\cos\big(\frac{33}{24}2^{ln}x\big)\cos\big(\frac{33}{24}x\big),\\
&\mathrm{K}_2=2\sum^{n-1}_{m=1}2^{-l(n+m)\sigma}\psi^2(x)\cos\big(\frac{33}{24}2^{ln}x\big)\cos\big(\frac{33}{24}2^{lm}x\big).
\end{align*}
For the first term $\mathrm{K}_1$, after a simple calculation, we discover
\begin{align}\label{l0}
\|\mathrm{K}_1\|_{L^p}&\geq 2^{-ln\sigma}\big\|\psi^2(x)\cos\big(\frac{33}{24}2^{ln}x\big)\cos\big(\frac{33}{24}x\big)\big\|_{L^p}.
\end{align}
Similar to Lemma 3.2 in \cite{lyz}, we have for some $\lambda>0$
\begin{align}\label{l2}
\big\|\psi^2(x)\cos\big(\frac{33}{24}2^{ln}x\big)\cos\big(\frac{33}{24}x\big)\big\|_{L^p}&\geq c\big(p,\lambda,\psi(0)\big).
\end{align}
Then we obtain from \eqref{l0}--\eqref{l2} that
\begin{align}\label{l3}
\|\mathrm{K}_1\|_{L^p}&\geq c2^{-ln\sigma}.
\end{align}
For the second term $\mathrm{K}_2$, from a straightforward calculation, we deduce
\begin{align}\label{l4}
\|\mathrm{K}_2\|_{L^p}\leq C\sum^{n-1}_{m=1}2^{-l(n+m)\sigma}\leq C2^{-l(n+1)\sigma}.
\end{align}
\eqref{l3} and \eqref{l4} together yield that
\begin{align*}
\|\Delta_{ln}\big(u_0^2\big)\|_{L^p}\geq(c-C2^{-l\sigma})2^{-ln\sigma}.
\end{align*}
Choosing $l\geq4$ such that $c-C2^{-l\sigma}>0$ and then we finish the proof of Lemma \ref{le5}.
\end{proof}

\begin{prop}\label{pro3.1}
Let $s=\sigma-2$ and $u_0\in B^{\sigma}_{p,\infty}$. Assume $u\in L^\infty_TB^{\sigma}_{p,\infty}$ be the solution to the Cauchy problem \eqref{fww}, we have
\begin{align*}
&\|u(t)-u_0\|_{B^{s-1}_{p,\infty}}\leq Ct\Big(\|u_0\|_{B^{s-1}_{p,\infty}}+\|u_0\|_{B^{s-1}_{p,\infty}}\|u_0\|_{B^{s}_{p,\infty}}\Big),\\
&\|u(t)-u_0\|_{B^{s}_{p,\infty}}\leq Ct\big(\|u_0\|_{B^{s-1}_{p,\infty}}+\|u_0\|^2_{B^{s}_{p,\infty}}+\|u_0\|_{B^{s-1}_{p,\infty}}\|u_0\|_{B^{s+1}_{p,\infty}}\big),\\
&\|u(t)-u_0\|_{B^{s+1}_{p,\infty}}\leq Ct\big(\|u_0\|_{B^{s}_{p,\infty}}+\|u_0\|_{B^{s}_{p,\infty}}\|u_0\|_{B^{s+1}_{p,\infty}}+\|u_0\|_{B^{s-1}_{p,\infty}}\|u_0\|_{B^{s+2}_{p,\infty}}\big).
\end{align*}
\end{prop}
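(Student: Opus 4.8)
\medskip

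\noindent\textbf{The plan is} to write the difference $u(t)-u_0$ as a time integral of $\partial_t u$ using the equation, and then estimate the resulting terms in the three Besov norms $B^{s-1}_{p,\infty}$, $B^{s}_{p,\infty}$ and $B^{s+1}_{p,\infty}$ by the Moser-type product estimates of Lemma \ref{product} together with the $S^{-1}$-multiplier property of $\partial_x{\rm p}\ast\,\cdot$ from Proposition \ref{Besov}(6). The point to keep in mind is that $u_0\in B^\sigma_{p,\infty}=B^{s+2}_{p,\infty}$, so all of the norms $\|u_0\|_{B^{s-1}_{p,\infty}},\dots,\|u_0\|_{B^{s+2}_{p,\infty}}$ appearing on the right-hand sides are finite, and by Theorem \ref{th} (applied at regularity $\sigma$, i.e. with $s$ there replaced by $\sigma$) the solution $u$ is bounded in $L^\infty_T B^{\sigma}_{p,\infty}$ together with all lower norms $L^\infty_T B^{\sigma'}_{p,\infty}$ for $\sigma'\le\sigma$, with bound $\|u(t)\|_{B^{\sigma'}_{p,\infty}}\le C\|u_0\|_{B^{\sigma'}_{p,\infty}}$; this is what allows us to trade $\|u(\tau)\|$-factors for $\|u_0\|$-factors under the integral.

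\medskip

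\noindent\textbf{First I would} write, from the nonlocal form \eqref{fww},
\begin{align*}
u(t)-u_0=-\frac32\int_0^t u\,\partial_x u\,{\ud}\tau+\int_0^t(1-\partial_{xx})^{-1}\partial_x u\,{\ud}\tau,
\end{align*}
and estimate each piece. For the linear term, $\|(1-\partial_{xx})^{-1}\partial_x u\|_{B^\theta_{p,\infty}}\le C\|u\|_{B^{\theta-1}_{p,\infty}}\le C\|u\|_{B^{\theta}_{p,\infty}}$ for $\theta\in\{s-1,s,s+1\}$, and by Theorem \ref{th} the integrand is $\le C\|u_0\|_{B^{\theta-1}_{p,\infty}}$ (one can afford to keep it at level $\theta-1$ to match the stated right-hand sides). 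For the transport-nonlinearity term I would use the Moser estimates: at level $s-1$, since $s-1=\sigma-3>1/p$, Lemma \ref{product}(2) gives $\|u\partial_x u\|_{B^{s-1}_{p,\infty}}\le C\|u\|_{B^{s-1}_{p,\infty}}\|\partial_x u\|_{B^{s-1}_{p,\infty}}\le C\|u\|_{B^{s-1}_{p,\infty}}\|u\|_{B^{s}_{p,\infty}}$; at levels $s$ and $s+1$ one instead uses Lemma \ref{product}(1) to split as $\|fg\|_{B^\theta}\lesssim\|f\|_{L^\infty}\|g\|_{B^\theta}+\|f\|_{B^\theta}\|g\|_{L^\infty}$, combined with the embedding $B^{s-1}_{p,\infty}\hookrightarrow L^\infty$ (valid since $s-1>1/p$) applied to the $L^\infty$-factors, producing $\|u\partial_x u\|_{B^{s}_{p,\infty}}\le C(\|u\|_{B^s_{p,\infty}}^2+\|u\|_{B^{s-1}_{p,\infty}}\|u\|_{B^{s+1}_{p,\infty}})$ and similarly at level $s+1$ with an $\|u\|_{B^{s-1}}\|u\|_{B^{s+2}}$ term appearing. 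Then integrating in $\tau\in[0,t]$ and invoking $\|u(\tau)\|_{B^{\sigma'}_{p,\infty}}\le C\|u_0\|_{B^{\sigma'}_{p,\infty}}$ converts everything into the stated bounds with a factor $t$ out front.

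\medskip

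\noindent\textbf{The main obstacle} is simply bookkeeping: one must be careful that $s-1=\sigma-3$ lies strictly above $1/p$ so that all the product and embedding estimates are licensed — this is exactly why the hypothesis $\sigma>3+1/p$ (equivalently $s=\sigma-2>1+1/p$) is imposed — and one must track which $\|u_0\|_{B^{\sigma'}_{p,\infty}}$-factors genuinely appear at each regularity level so that the three displayed inequalities come out with precisely the claimed combinations of norms. There is no genuine analytic difficulty beyond what Theorem \ref{th} and Lemma \ref{product} already provide; the estimates at levels $s$ and $s+1$ are slightly more involved only because the highest-regularity term $\|u_0\|_{B^{s+1}_{p,\infty}}$ (resp. $\|u_0\|_{B^{s+2}_{p,\infty}}$) must be paired with the lowest-regularity factor $\|u_0\|_{B^{s-1}_{p,\infty}}$, which is where the $L^\infty$-splitting in Lemma \ref{product}(1) is essential rather than the symmetric product estimate.
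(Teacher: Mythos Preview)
Your proposal is correct and follows essentially the same approach as the paper: write $u(t)-u_0=\int_0^t\partial_\tau u\,{\ud}\tau$, split into the nonlocal term (handled via the $S^{-1}$-multiplier bound) and the transport nonlinearity $uu_x$ (handled via the Moser estimates of Lemma~\ref{product}, with the $L^\infty$-splitting at levels $s$ and $s+1$ and the embedding $B^{s-1}_{p,\infty}\hookrightarrow L^\infty$), and then invoke the a~priori bound $\|u(\tau)\|_{B^{\gamma}_{p,\infty}}\le C\|u_0\|_{B^{\gamma}_{p,\infty}}$ from Theorem~\ref{th}/Lemma~\ref{priori estimate} to trade $u$-factors for $u_0$-factors. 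The paper's proof is line-for-line the same argument, so there is nothing to add.
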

\begin{proof}
For $\gamma>0$, taking advantage of Lemma \ref{priori estimate} and Theorem \ref{th}, we find
\begin{align}\label{s1}
\|u(t)\|_{L^\infty_TB^\gamma_{p,\infty}}\leq C\|u_0\|_{B^\gamma_{p,\infty}}.
\end{align}
By the Mean Value Theorem, we obtain from \eqref{s1} that
\begin{align*}
\|u(t)-u_0\|_{B^s_{p,\infty}}
\leq&\int^t_0\|\partial_{\tau} u\|_{B^s_{p,\infty}} {\ud}\tau\\
\leq&\int^t_0\|(1-\partial_{xx})^{-1}\partial_{x}u\|_{B^s_{p,\infty}} {\ud}\tau+\frac{3}{2}\int^t_0\|uu_{x}\|_{B^s_{p,\infty}} {\ud}\tau\\
\leq& Ct\big(\|u\|_{L_t^\infty B^{s-1}_{p,\infty}}+\|u\|^{2}_{L_t^\infty B^{s}_{p,\infty}}+\|u\|_{L_t^\infty L^\infty}\|u_{x}\|_{L_t^\infty B^{s}_{p,\infty}}\big)\\
\leq& Ct\big(\|u\|_{L_t^\infty B^{s-1}_{p,\infty}}+\|u\|^{2}_{L_t^\infty B^{s}_{p,\infty}}+\|u\|_{L_t^\infty B^{s-1}_{p,\infty}}\|u\|_{L_t^\infty B^{s+1}_{p,\infty}}\big)\\
\leq&Ct\big(\|u_0\|_{B^{s-1}_{p,\infty}}+\|u_0\|^2_{B^{s}_{p,\infty}}+\|u_0\|_{B^{s-1}_{p,\infty}}\|u_0\|_{B^{s+1}_{p,\infty}}\big).
\end{align*}	
Following the same arguments, we get
\begin{align*}
\|u(t)-u_0\|_{B^{s-1}_{p,\infty}}
\leq& \int^t_0\|\partial_{\tau} u\|_{B^{s-1}_{p,\infty}} {\ud}\tau\\
\leq&\int^t_0\|(1-\partial_{xx})^{-1}\partial_{x}u\|_{B^{s-1}_{p,\infty}} {\ud}\tau+ \frac{3}{2}\int^t_0\|u u_{x}\|_{B^{s-1}_{p,\infty}} {\ud}\tau\\
\leq&Ct\Big(\|u\|_{L_t^\infty B^{s-1}_{p,\infty}}+\|u\|_{L_t^\infty B^{s-1}_{p,\infty}}\|u\|_{L_t^\infty B^{s}_{p,\infty}}\Big)\\
\leq&Ct\Big(\|u_0\|_{B^{s-1}_{p,\infty}}+\|u_0\|_{B^{s-1}_{p,\infty}}\|u_0\|_{B^{s}_{p,\infty}}\Big)
\end{align*}
and
\begin{align*}
\|u(t)-u_0\|_{B^{s+1}_{p,\infty}}
\leq& \int^t_0\|\partial_{\tau} u\|_{B^{s+1}_{p,\infty}} {\ud}\tau\\
\leq&\int^t_0\|(1-\partial_{xx})^{-1}\partial_{x}u\|_{B^{s+1}_{p,\infty}}{\ud}\tau+\frac{3}{2}\int^t_0\|uu_{x}\|_{B^{s+1}_{p,\infty}} {\ud}\tau\\
\leq& Ct\big(\|u\|_{L_t^\infty B^{s}_{p,\infty}}+\|u\|_{L_t^\infty B^{s}_{p,\infty}}\|u\|_{L_t^\infty B^{s+1}_{p,\infty}}+\|u\|_{L_t^\infty B^{s-1}_{p,\infty}}\|u\|_{L_t^\infty B^{s+2}_{p,\infty}}\big)\\
\leq&Ct\big(\|u_0\|_{B^{s}_{p,\infty}}+\|u_0\|_{B^{s}_{p,\infty}}\|u_0\|_{B^{s+1}_{p,\infty}}+\|u_0\|_{B^{s-1}_{p,\infty}}\|u_0\|_{B^{s+2}_{p,\infty}}\big).
\end{align*}
Thus, we finish the proof of Proposition \ref{pro3.1}.
\end{proof}

\begin{prop}\label{pro3.2}
Let $s=\sigma-2$ and $u_0\in B^{\sigma}_{p,\infty}$. Assume that $u\in L^\infty_TB^{\sigma}_{p,\infty}$ be the solution to the Cauchy problem \eqref{fww}, we have
\begin{align*}
\|\mathbf{w}(t,u_0)\|_{B^{s}_{p,\infty}}
\leq&Ct^2\Big(\|u_0\|_{B^{s-1}_{p,\infty}}+\|u_0\|_{B^{s-1}_{p,\infty}}\|u_0\|_{B^{s}_{p,\infty}}+\|u_0\|^3_{B^s_{p,\infty}}+\|u_0\|_{B^{s-1}_{p,\infty}}\|u_0\|_{B^{s}_{p,\infty}}\|u_0\|_{B^{s+1}_{p,\infty}}\\
&\qquad+\|u_0\|_{B^{s-1}_{p,\infty}}^{2}\|u_0\|_{B^{s+2}_{p,\infty}}\Big),
\end{align*}
here and in what follows we denote
\begin{align*}
&\mathbf{w}(t,u_0):=u(t)-u_0-t\mathbf{T}(u_0),\\
&\mathbf{T}(u_0):=-\frac{3}{2}u_0\partial_{x} u_0+(1-\partial_{xx})^{-1}\partial_{x} u_0.
\end{align*}
In particular, we obtain
\begin{align*}
\|\mathbf{w}(t,u_0)\|_{B^{\sigma-2}_{p,\infty}}\leq C\big(\|u_0\|_{B^{\sigma}_{p,\infty}}\big)t^{2}.
\end{align*}
\end{prop}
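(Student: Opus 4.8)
The plan is to turn $\mathbf{w}(t,u_0)$ into the solution of a linear transport equation and then close the estimate with the transport machinery (Lemma \ref{priori estimate}) together with Proposition \ref{pro3.1}. Since $u$ solves \eqref{fww}, i.e. $\partial_t u=-\frac{3}{2}u\partial_x u+(1-\partial_{xx})^{-1}\partial_x u$, while $\mathbf{T}(u_0)=-\frac{3}{2}u_0\partial_x u_0+(1-\partial_{xx})^{-1}\partial_x u_0$, a direct computation — add $\frac{3}{2}u\,\partial_x\mathbf{w}$ to $\partial_t\mathbf{w}=\partial_t u-\mathbf{T}(u_0)$ and use $\partial_x\mathbf{w}=\partial_x u-\partial_x u_0-t\,\partial_x\mathbf{T}(u_0)$ — yields
\begin{align*}
\partial_t\mathbf{w}+\frac{3}{2}u\,\partial_x\mathbf{w}=(1-\partial_{xx})^{-1}\partial_x(u-u_0)-\frac{3}{2}(u-u_0)\partial_x u_0-\frac{3}{2}t\,u\,\partial_x\mathbf{T}(u_0)=:G,\qquad \mathbf{w}|_{t=0}=0.
\end{align*}
The crucial structural feature is that every term of $G$ carries either the factor $u-u_0$, which is $O(\tau)$ in each Besov norm by Proposition \ref{pro3.1}, or an explicit factor $t$; so $\|G(\tau)\|_{B^{s}_{p,\infty}}$ will be $O(\tau)$ with coefficients polynomial in $\|u_0\|_{B^{s-1}_{p,\infty}},\dots,\|u_0\|_{B^{s+2}_{p,\infty}}$.

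Next I would apply Lemma \ref{priori estimate} to this transport equation with $\theta=s=\sigma-2>1+\frac1p$ and velocity field $v=\frac{3}{2}u$. Then $V'(\tau)=\frac{3}{2}\|\partial_x u(\tau)\|_{B^{s-1}_{p,\infty}}\leq C\|u(\tau)\|_{B^{s}_{p,\infty}}\leq C\|u_0\|_{B^{s}_{p,\infty}}$ by \eqref{s1}, hence $V(t)\leq Ct\|u_0\|_{B^{s}_{p,\infty}}$ stays bounded on $[0,T]$, and from $\mathbf{w}(0)=0$ we obtain
\begin{align*}
\|\mathbf{w}(t)\|_{B^{s}_{p,\infty}}\leq e^{CV(t)}\int_0^t\|G(\tau)\|_{B^{s}_{p,\infty}}\,\ud\tau\leq C\int_0^t\|G(\tau)\|_{B^{s}_{p,\infty}}\,\ud\tau.
\end{align*}
It remains to estimate $\|G(\tau)\|_{B^{s}_{p,\infty}}$: the first term is $\leq C\|u-u_0\|_{B^{s-1}_{p,\infty}}$ by the $S^{-1}$-multiplier property of $(1-\partial_{xx})^{-1}\partial_x$ (Proposition \ref{Besov}(6)); the two products are controlled by the Moser estimate Lemma \ref{product}(1) together with the embedding $B^{s-1}_{p,\infty}\hookrightarrow L^\infty$ (legitimate since $s-1>\frac1p$), where $\partial_x\mathbf{T}(u_0)$ is in turn estimated in $B^{s}_{p,\infty}$ by one more application of Lemma \ref{product}(1) and the multiplier bound, costing two derivatives on $u_0$, i.e. reaching $\|u_0\|_{B^{s+2}_{p,\infty}}=\|u_0\|_{B^{\sigma}_{p,\infty}}$. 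Feeding in the bounds of Proposition \ref{pro3.1} for $\|u-u_0\|$ in $B^{s-1}_{p,\infty},B^{s}_{p,\infty},B^{s+1}_{p,\infty}$ and \eqref{s1} for $\|u\|_{B^{\gamma}_{p,\infty}}$ makes every summand of $\|G(\tau)\|_{B^{s}_{p,\infty}}$ bounded by $C\tau$ times one of the monomials in the stated estimate; integrating in $\tau$ supplies the factor $t^2$.

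For the final "in particular" assertion I would simply use $s+2=\sigma$ and the embedding $B^{\sigma}_{p,\infty}\hookrightarrow B^{\sigma'}_{p,\infty}$ for all $\sigma'\leq\sigma$ (Proposition \ref{Besov}(3)) to bound each of $\|u_0\|_{B^{s-1}_{p,\infty}},\|u_0\|_{B^{s}_{p,\infty}},\|u_0\|_{B^{s+1}_{p,\infty}},\|u_0\|_{B^{s+2}_{p,\infty}}$ by $\|u_0\|_{B^{\sigma}_{p,\infty}}$, so the polynomial collapses to $C(\|u_0\|_{B^{\sigma}_{p,\infty}})$. The main obstacle is the bookkeeping of the nonlinear products in $G$: one must carefully keep the two derivatives that $\partial_x\mathbf{T}(u_0)$ costs on the $u_0$-factor — which is precisely why the threshold $\sigma>3+\frac1p$, equivalently $s-1>\frac1p$, is needed, so that the companion factor ($u$ or $u-u_0$) can be pushed into $L^\infty$ via $B^{s-1}_{p,\infty}$ — while everything else is a routine iteration of the Moser estimate together with Proposition \ref{pro3.1}.
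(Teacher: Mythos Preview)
Your argument is correct, but it takes a more elaborate route than the paper does. The paper does not set up a transport equation for $\mathbf{w}$ and does not invoke Lemma \ref{priori estimate} here at all: since $\mathbf{w}(0)=0$, it simply writes
\[
\|\mathbf{w}(t)\|_{B^{s}_{p,\infty}}\leq\int_0^t\|\partial_\tau u-\mathbf{T}(u_0)\|_{B^{s}_{p,\infty}}\,\ud\tau
=\int_0^t\Big\|(1-\partial_{xx})^{-1}\partial_x(u-u_0)-\tfrac{3}{2}\big(u\partial_xu-u_0\partial_xu_0\big)\Big\|_{B^{s}_{p,\infty}}\,\ud\tau,
\]
splits $u\partial_xu-u_0\partial_xu_0$ into pieces carrying $u-u_0$ in $B^{s-1}_{p,\infty}$, $B^{s}_{p,\infty}$, $B^{s+1}_{p,\infty}$, and feeds in Proposition \ref{pro3.1}. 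What this buys is simplicity: no velocity field, no $e^{CV(t)}$ factor, and no need to estimate $\partial_x\mathbf{T}(u_0)$ separately. What your transport formulation buys is a cleaner right-hand side $G$ in which every term visibly carries either $u-u_0$ or an explicit $t$, so the $O(t^2)$ structure is transparent before any product estimate is performed. One caveat: because you route the nonlinearity through the extra term $-\tfrac{3}{2}t\,u\,\partial_x\mathbf{T}(u_0)$, your intermediate bound will not reproduce exactly the five monomials displayed in the statement (a few additional harmless monomials such as $\|u_0\|_{B^{s}_{p,\infty}}^2\|u_0\|_{B^{s+1}_{p,\infty}}$ appear); this is irrelevant for the ``in particular'' conclusion, which is all that is used downstream.
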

\begin{proof}
Taking advantage of the Mean Value Theorem and Eq. \eqref{fww}, and then using Lemma \ref{product}, we see
\begin{align*}
\|\mathbf{w}(t,u_0)\|_{B^s_{p,\infty}}
\leq&\int^t_0\|\partial_{\tau}u-\mathbf{T}(u_0)\|_{B^s_{p,\infty}} {\ud}\tau\\
\leq&\int^t_0\|(1-\partial_{xx})^{-1}\partial_{x}(u-u_{0})\|_{B^s_{p,\infty}}{\ud}\tau+\frac{3}{2}\int^t_0\|u\partial_{x}u-u_0\partial_{x}u_0\|_{B^s_{p,\infty}} {\ud}\tau\\
\leq&C\int^t_0\|u(\tau)-u_0\|_{B^{s-1}_{p,\infty}}{\ud}\tau+C\int^t_0\|u(\tau)-u_0\|_{B^{s}_{p,\infty}}\|u_0\|_{B^s_{p,\infty}}{\ud}\tau\\
&+C\int^t_0\|u(\tau)-u_0\|_{B^{s-1}_{p,\infty}}\|u(\tau)\|_{B^{s+1}_{p,\infty}} {\ud}\tau+C\int^t_0\|u(\tau)-u_0\|_{B^{s+1}_{p,\infty}}\|u_0\|_{B^{s-1}_{p,\infty}}{\ud}\tau\\
\leq&Ct^2\Big(\|u_0\|_{B^{s-1}_{p,\infty}}+\|u_0\|_{B^{s-1}_{p,\infty}}\|u_0\|_{B^{s}_{p,\infty}}+\|u_0\|^3_{B^s_{p,\infty}}+\|u_0\|_{B^{s-1}_{p,\infty}}\|u_0\|_{B^{s}_{p,\infty}}\|u_0\|_{B^{s+1}_{p,\infty}}\\
&\qquad+\|u_0\|_{B^{s-1}_{p,\infty}}^{2}\|u_0\|_{B^{s+2}_{p,\infty}}\Big),
\end{align*}
where we have used Proposition \ref{pro3.1} in the last step.
	
Thus, we complete the proof of Proposition \ref{pro3.2}.
\end{proof}

Now we present the proof of Theorem \ref{ill}.
\begin{proof}[\bf The proof of Theorem \ref{ill}:]
Using Proposition \ref{bern}, Lemma \ref{le5} and Proposition \ref{pro3.2}, we get
\begin{align*}
\|u(t)-u_0\|_{B^\sigma_{p,\infty}}
\geq&2^{{ln\sigma}}\big\|\Delta_{ln}\big(u(t)-u_0\big)\big\|_{L^p}=2^{{ln\sigma}}\big\|\Delta_{ln}\big(t\mathbf{T}(u_0)+\mathbf{w}(t,u_0)\big)\big\|_{L^p}\\
\geq& t2^{{ln\sigma}}\|\Delta_{ln}\big(\mathbf{T}(u_0)\big)\|_{L^p}-2^{{2ln}}2^{{ln(\sigma-2)}}\big\|\Delta_{ln}\big(\mathbf{w}(t,u_0)\big)\big\|_{L^p}\\
\geq& ct2^{{ln}(\sigma+1)}\|\Delta_{ln}\big(u_0^2\big)\|_{L^p}-Ct\|u_0\|_{B^{\sigma-1}_{p,\infty}}-C2^{2{ln}}\|\mathbf{w}(t,u_0)\|_{B^{\sigma-2}_{p,\infty}}\\
\geq& ct2^{{ln}(\sigma+1)}\|\Delta_{ln}\big(u_0^2\big)\|_{L^p}-Ct-C2^{2{ln}}t^2\\
\geq& ct2^{{ln}}-Ct-C2^{2{ln}}t^2.
\end{align*}
Then, for $l\geq 4$, taking $n>N$ large  enough such that $c2^{{ln}}\geq 2C$, we deduce that
\begin{align*}
\|u(t)-u_0\|_{B^\sigma_{p,\infty}}\geq ct2^{{ln}}-C2^{2{ln}}t^2.
\end{align*}
Thus, choosing $t2^{ln}\approx\ep$ with small $\ep$, we eventually conclude that
\begin{align*}
\|u(t)-u_0\|_{B^\sigma_{p,\infty}}\geq c\ep-C\ep^2\geq c_1\ep.
\end{align*}
This proves Theorem \ref{ill}.
\end{proof}

\noindent\textbf{Acknowledgements.}
Y. Guo was supported by the Guangdong Basic and Applied Basic Research Foundation (No. 2020A1515111092) and Research Fund of Guangdong-Hong Kong-Macao Joint Laboratory for Intelligent Micro-Nano Optoelectronic Technology (No. 2020B1212030010).


\end{document}